\providecommand{\noopsort[1]{}}
\numberwithin{equation}{section}
\newtheorem{thm}{Theorem}[section]
\newtheorem{cor}[thm]{Corollary}
\newtheorem{prop}[thm]{Proposition}
\newtheorem{lem}[thm]{Lemma}
\theoremstyle{remark}
\newtheorem{rem}[thm]{Remark}
\newtheorem{example}[thm]{Example}
\newtheorem{hyp}[thm]{Hypothesis}
\theoremstyle{definition}
\newtheorem{defn}[thm]{Definition}
\newcommand{\eps}{\varepsilon}
\newcommand{\one}{\mathbbm{1}}
\newcommand{\ip}[2]{\ifthenelse{\equal{#1}{}}{\mbox{$ [ \,\cdot\, , \, \cdot \, ] $}}{
\mbox{$ \left[ #1 \, , \, #2 \right]$}}}
\newcommand{\norm}[1]{\ifthenelse{\equal{#1}{}}{\mbox{$\|\cdot\|$}}{\mbox{$\| #1 \|$}}}
\newcommand{\dual}[2]{\ifthenelse{\equal{#1}{}}{\mbox{$ \langle \,\cdot\; , \; \cdot \, \rangle $}}{
\mbox{$ \langle #1   ,  #2 \rangle$}}}
\newcommand{\CR}{\mathbb{R}}
\newcommand{\CN}{\mathbb{N}}
\newcommand{\loc}{\mathrm{loc}}
\newcommand{\A}{\mathscr{A}}
\newcommand{\cL}{\mathscr{L}}
\newcommand{\cB}{\mathscr{B}}
\newcommand{\half}{\frac{1}{2}}
\begin{document}
\title{Kernel estimates for nonautonomous Kolmogorov equations}
\keywords{Nonautonomous elliptic operators, unbounded coefficients, kernel estimates, evolution systems of measures}
\subjclass[2000]{35K10; 35K08, 37L40}

\begin{abstract}
Using time dependent Lyapunov functions, we prove pointwise upper bounds for the heat kernels of some  nonautonomous Kolmogorov operators with possibly unbounded drift and diffusion coefficients.
\end{abstract}

\author[M. Kunze]{Markus Kunze}
\address{Graduiertenkolleg 1100, University of Ulm, 89069 Ulm, Germany}
\email{markus.kunze@uni-ulm.de}
\author[L. Lorenzi]{Luca Lorenzi}
\address{Dipartimento di Matematica e Informatica, Universit\`a degli Studi di Parma, Parco Area delle Scienze 53/A, 43124 Parma, Italy}
\email{luca.lorenzi@unipr.it}
\author[A. Rhandi]{Abdelaziz Rhandi}
\address{Dipartimento di Ingegneria dell'Informazione, Ingegneria Elettrica e Matematica Applicata, Universit\`a degli Studi di Salerno, Via Ponte Don Melillo 1, 84084 Fisciano (Sa), Italy}
\email{arhandi@unisa.it}

\maketitle

\section{Introduction}

We study nonautonomous evolution equations
\begin{equation}\label{eq.nee}
\left\{ \begin{array}{rlll}
\partial_t u(t,x) & = & \A (t)u(t,x)\,, & (t,x) \in (s, 1]\times \CR^d\,,\\[1mm]
u(s,x) & = & f(x)\,, & x \in \CR^d\, ,
\end{array}\right.
\end{equation}
where the operators $\A (t)$ are defined on smooth functions $\varphi$ by
\begin{eqnarray*}
(\A(t)\varphi )(x) = \sum_{ij=1}^dq_{ij}(t,x) D_{ij}\varphi (x) +
\sum_{i=1}^dF_i(t,x)D_i\varphi (x)\, ,
\end{eqnarray*}
and $s\in [0,1)$.
Throughout, we make the following assumptions on the coefficients.

\begin{hyp}\label{hyp1}
The coefficients $q_{ij}, F_j$ ($i,j=1,\ldots,d$) are defined on $[0,1]\times \CR^d$ and
\begin{enumerate}
\item
there exists an $\varsigma \in (0,1)$ such that $q_{ij}, F_j \in
C^{\frac{\varsigma}{2}, \varsigma}_\loc ([0,1]\times \CR^d)$ for all
$i,j =1, \ldots, d$. Moreover, $q_{ij} \in C^{0,1}((0,1)\times \CR^d)$;
\item
the matrix $Q = (q_{ij})$ is symmetric and uniformly elliptic in the sense that there exists a number $\eta > 0$ such that
\begin{eqnarray*}
\sum_{i,j=1}^d q_{ij}(t,x)\xi_i\xi_j \geq \eta |\xi|^2 \quad \mbox{for all}\,\, \xi \in \CR^d,\,\, (t,x) \in [0,1]\times \CR^d\,;
\end{eqnarray*}
\item there exist a nonnegative function $V\in C^2(\CR^d)$ and a constant $M \geq 0$ such that $\lim_{|x|\to\infty}V(x) = \infty$
and we have $\A(t)V(x) \leq M$, as well as $\eta \Delta V(x) + F(t,x)\cdot \nabla V(x) \leq M$, for all $(t,x) \in [0,1]\times \CR^d$.
\end{enumerate}
\end{hyp}
Note that neither $q_{ij}$ nor $F_j$ ($i,j =1, \ldots, d$) are assumed to be bounded in $\CR^d$.

Under Hypothesis \ref{hyp1}, it was proved in \cite{kll10} that equation \eqref{eq.nee} is well posed in the sense that, for every
$f \in C_b(\CR^d)$, there exists a unique function $u \in C_b([s, 1]\times \CR^d) \cap C^{1,2}((s, 1]\times \CR^d)$
such that \eqref{eq.nee} is satisfied. Moreover, there exists an evolution family $(G(t,s))_{t,s \in D} \subset \cL (C_b(\CR^d))$, where $D := \{ (t,s) \in [0,1]^2\,:\, t \geq s\}$, such that the unique solution $u$ to \eqref{eq.nee} is given by
$u= G(\cdot,s)f$. It turns out that each operator $G(t,s)$ is a contraction. We recall that an \emph{evolution family} is a family $(G(t,s))_{(t,s) \in D}$ such that $G(t,t) =id_{C_b(\CR^d)}$
and, for $r,s,t \in [0,1]$ with $r \leq s \leq t$, the \emph{evolution law} $G(t,s)G(s,r) = G(t,r)$ holds.
Furthermore, the evolution family can be represented in terms of transition kernels (or transition probabilities) $p_{t,s}$ via the formula
\begin{eqnarray*}
(G(t,s)f)(x) = \int_{\CR^d} f(y)p_{t,s}(x, dy)\,,
\end{eqnarray*}
for each $x\in\CR^d$ and $f \in C_b(\CR^d)$. We refer to Section 2 and \cite{kll10} for a review of these results.

It will be important for us to also consider the adjoint problem to the Cauchy problem \eqref{eq.nee}. Taking adjoints, time is reversed, whence in the
adjoint problem on the measures on $\CR^d$ we have to prescribe \emph{final values} rather than initial values. The adjoint problem is governed by the adjoint operators $G(t,s)^*$. Given $t \in [0,1]$ and a final value $\mu_t$, the solution of the adjoint problem is given by $\mu_s := G(t,s)^*\mu_t$, for $s \in [0,t]$. We stress that the problem of the existence and uniqueness of the solution of the adjoint problem to the Cauchy problem \eqref{eq.nee}, with a prescribed final condition, has been addressed in \cite{bdpr04,bdpr08,bdprs07} even under weaker smoothness assumptions on the coefficients of the operators $\A(t)$ than those we assume in this paper. For more details we refer to the recent survey \cite{bkr09}.

Given an interval $I \subset [0,1]$, we say that a family $(\mu_s)_{s \in I}$ of probability measures
is an \emph{evolution system of measures} if for $t,s \in I$ with $s\leq t$ we have
\begin{equation}\label{eq.esmchar}
G(t,s)^*\mu_t = \mu_s\,.
\end{equation}

Of particular importance is the case where $I=[0,t]$ and $\mu_t = \delta_x$. In this case, $\mu_s = G(t,s)^*\delta_x = p_{t,s}(x, \cdot)$ are exactly the transition probabilities for our evolution equation.

It can be seen that if $t \in (0,1]$ and $(\mu_s)_{s \in [0,t]}$ is an evolution system of measures, then for $s < t$ the measure
$\mu_s$ has a density $\rho (s, \cdot)$ with respect to $d$-dimensional Lebesgue measure.
For more details see Section 2.

The aim of this paper is to study global regularity properties and pointwise bounds of $\rho$ as a function of $(s,y)\in (a,b)\times \CR^d$ for $0<a<b<1$. More precisely, we show, in the case of bounded diffusion coefficients, that $\rho \in W_k^{0,1}((a,b)\times \CR^d)$, provided that \begin{eqnarray*}
\int_{a_0}^{b_0}\int_{\CR^d}|F(\sigma ,x)|^k\rho(\sigma ,x)\,dx\,d\sigma <\infty\,,
\end{eqnarray*}
for all $k>1$, and $0<a_0<a<b<b_0<1$.

Thus, using time dependent Lyapunov functions and an approximation argument we deduce global boundedness and pointwise estimates of $\rho$ in the general case where the diffusion coefficients are not supposed to be bounded.
This is the main result of this paper which generalizes in some sense Theorem 4.1 in \cite{bkr06} and Theorem 3.2 in \cite{brs06}.

In particular, putting $\mu_t = \delta_x$, we  obtain pointwise estimates for the density of the transition probabilities $p_{t,s}(x,\cdot)$ for any $t\in (0,1],\,s\in (0,t)$ and $x\in \CR^d$.

For autonomous and bounded diffusion coefficients similar results can be found in \cite{mpr10,alr10,lmpr11}. For global regularity properties and pointwise estimates in the elliptic and autonomous case, we refer to
\cite{bkr,ffmp09,lr12,mpr05,ms07}.

As an application we show that the transition kernels associated to the operator $(\A (t)\varphi) (x)=(1+|x|^m){\rm Tr}(Q^0(t,x)D^2\varphi(x))-b(t,x)|x|^{p-1}\langle x, \nabla \varphi(x)\rangle $ satisfy
\begin{eqnarray*}
0<p_{t,s}(x,y)\le (t-s)^{-\beta}e^{-\delta_0(t-s)^\alpha |y|^{p+1-m}},\qquad\;\, t\in (0,1],\,s\in (0,t),\,x,y\in \CR^d\,,
\end{eqnarray*}
where $m\ge 0,\,p>\max\{m-1,1\},\,\alpha >(p+1-m)/(p-1)$ and $\delta_0$, $\beta$ are suitable positive constants. Here $Q^0$ and $b$ are, respectively, a matrix valued function and a scalar function satisfying appropriate conditions. This generalizes the examples in \cite{ffmp09,alr10}.

Our approach is a combination of the approaches given in \cite{mpr10,ffmp09}.

\subsection*{Notations}
We write $|x|$ for the Euclidean norm of $x \in \CR^d$ and $\langle x,y\rangle$, or sometimes $x\cdot y$, for the inner product of $x,y \in \CR^d$.
The open ball in $\CR^d$ of radius $r$ and centered at $0$ is denoted by $B_r$. For a matrix $Q \in \CR^{d\times d}$ its trace is denoted by
$\textrm{Tr}(Q)$.

We write $\cB (\CR^d)$ for the Borel $\sigma$-algebra on $\CR^d$, $\delta_x$ for the Dirac measure in $x$ and $\one_E$ for the characteristic
function of a set $E$; $\one=\one_{\CR^d}$.

Concerning derivatives, we use the notations $\partial_tf:=\frac{\partial f}{\partial t}$,
$D_if:=\frac{\partial f}{\partial x_i}$, $D_{ij}f:=\frac{\partial^2f}{\partial x_i\partial x_j}$.
We also write $\nabla f$ for the spatial gradient of $f$. The positive part of a function (or a number) $f$ is denoted by $f_+$.

We assume that the reader is familiar with the spaces $C^r(\CR^d)$ ($r\ge 0$). We add a subscript ``$b$'' (resp.\ ``$c$'')
to denote the subspace of bounded functions (resp.\ functions with compact support).

Now, let $0\leq a < b \leq 1$; we write $Q(a,b)$ for the open cylinder $(a,b)\times \CR^d$ and $\bar{Q}(a,b)$ for its closure $[a,b]\times \CR^d$.
We assume that the reader is familiar with the parabolic Sobolev space $W^{k,l}_p(Q(a,b))$ and with the space $C^{k,l}(Q(a,b))$ ($k,l\in\CN\cup\{0\}$, $p\in [1,\infty]$). If a function $f\in C^{k,l}(Q(a,b))$, along with its derivatives, has continuous extensions
to $\bar{Q}(a,b)$, it belongs to $C^{k,l}(\bar{Q}(a,b))$. We also consider the spaces
\begin{align*}
C_c^{k,l}(Q(a,b)) &= \{\phi \in C^{k,l}(Q(a,b))\,:\, \mathrm{supp}(\phi) \subset (a,b)\times B_R \; \mbox{for some } \, R>0\}\,,\\
C_c^{k,l}(\bar{Q}(a,b)) &= \{\phi \in C^{k,l}(\bar{Q}(a,b))\,:\, \mathrm{supp}(\phi) \subset [a,b]\times B_R \; \mbox{for some } \, R>0\}\,.
\end{align*}
Note that we are not requiring that $u\in C_c^{k,l}(\bar{Q}(a,b))$ vanishes at
$t=a,\,t=b$.
The  canonical norm in $L^p(a,b; L^q(\CR^d))$ is denoted by $\|\cdot\|_{p,q}$. When $p=q$, we either write
$\|\cdot\|_{L^p(Q(a,b))}$ or simply $\|\cdot\|_p$.

For $\varsigma\in (0,1)$, $C^{\varsigma/2,\varsigma}(\bar{Q}(a,b))$
is the usual parabolic H\"older space. We use the subscript ``loc'' to denote
the space of all $f\in C(\bar{Q}(a,b))$ which are $(\varsigma/2,\varsigma)$-H\"older continuous in  $[a,b]\times K$ for any
compact set $K\subset\CR^d$. Similarly, $C^{1+\varsigma/2,2+\varsigma}(\bar{Q}(a,b))$ (resp. $C^{1+\varsigma/2,2+\varsigma}_{\rm loc}(\bar{Q}(a,b))$)
is the set of all functions $f$ such that $\partial_t f$, $D_if$ and $D_{ij}f$ ($i,j=1,\ldots,d)$ belong to $C^{\varsigma/2,\varsigma}(\bar{Q}(a,b))$ (resp. $C^{\varsigma/2,\varsigma}_{\rm loc}(\bar{Q}(a,b))$).

We use also the space $\mathscr{H}^{p,1}(Q(a,b))$ (cf. \cite{krylov01}) of all functions $u\in W^{0,1}_p(Q(a,b))$
such that the distributional derivative $\partial_tu$ belongs to the space $(W^{0,1}_{p'}(Q(a,b)))'$, where $1/p+1/p'=1$.
The space $\mathscr{H}^{p,1}(Q(a,b))$ is endowed with the norm
\begin{eqnarray*}
\|f\|_{{\mathscr H}^{p,1}(Q(a,b))}=\|\partial_t f\|_{(W^{0,1}_{p'}(Q(a,b)))'}+\|f\|_{W^{0,1}_p(Q(a,b))}\,.
\end{eqnarray*}
With a slight abuse of notation, we indicate by
\begin{eqnarray*}
\int_{Q(a,b)}v\partial_tu\,dt\,dx
\end{eqnarray*}
the pairing between $\partial_t u\in (W^{0,1}_{p'}(Q(a,b)))'$ and $v\in W^{0,1}_{p'}(Q(a,b))$.

\section{Preliminaries}

\subsection{The associated evolution family}

We recall some basic properties of the evolution
family $(G(t,s))_{(t,s) \in D}$.

\begin{lem}\label{l.Gproperties}
For $(t,s) \in D$ and $x \in \CR^d$, there exists a unique Borel probability measure $p_{t,s}(x, \cdot)$ such that
\begin{eqnarray*}
(G(t,s)f)(x) = \int_{\CR^d} f(y)p_{t,s}(x, dy)\,,
\end{eqnarray*}
for each $f \in C_b(\CR^d)$. Moreover, the following properties hold.
\begin{enumerate}
\item $p_{t,t}(x, \cdot) =\delta_x$. For $s<t$, the measure $p_{t,s}(x, \cdot)$ is equivalent
to the Lebesgue measure.
\item For fixed $A \in \cB (\CR^d)$ and $(t,s) \in D$, the map $x \mapsto p_{t,s}(x,A)$ is Borel measurable.
\item If $A \in \cB(\CR^d)$ has strictly positive Lebesgue measure, then $p_{t,s}(x,A) = (G(t,s)\one_A)(x) > 0$, for
all $(t,s) \in D$, with $t>s$, and $x\in\CR^d$.
\end{enumerate}
\end{lem}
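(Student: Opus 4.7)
The plan is to construct the measures $p_{t,s}(x,\cdot)$ via the Riesz representation theorem applied to the positive linear functional $f \mapsto (G(t,s)f)(x)$, and then deduce the three properties from the results on $(G(t,s))_{(t,s)\in D}$ quoted from \cite{kll10} together with classical parabolic theory for $\partial_t-\A(t)$.

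First, fix $(t,s)\in D$ and $x\in \CR^d$. The map $f\mapsto (G(t,s)f)(x)$ is a positive linear functional on $C_b(\CR^d)$, and the Lyapunov condition in Hypothesis \ref{hyp1}(3), combined with a maximum-principle argument as in \cite{kll10}, yields conservation of mass $G(t,s)\one=\one$, so that the functional is normalized. By the Daniell--Stone theorem (alternatively, by Riesz representation after using the Lyapunov function to establish tightness and restrict attention to $C_0(\CR^d)$), it is represented by a unique Borel probability measure $p_{t,s}(x,\cdot)$. Uniqueness against all $f\in C_b(\CR^d)$ is automatic. The identity $p_{t,t}(x,\cdot)=\delta_x$ in item~(1) follows immediately from $G(t,t)=\mathrm{id}_{C_b(\CR^d)}$.

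To obtain the equivalence with Lebesgue measure for $s<t$, I would appeal to classical parabolic theory. Under Hypothesis~\ref{hyp1}, the operator $\partial_t-\A(t)$ is uniformly parabolic with locally H\"older coefficients, so an exhaustion by bounded smooth subdomains (with zero boundary data) together with interior Schauder estimates produces a fundamental solution $\Gamma(t,x;s,y)$ on $\{s<t\}\times\CR^d\times\CR^d$, and the parabolic Harnack inequality of Krylov--Safonov gives $\Gamma>0$. Uniqueness for \eqref{eq.nee} (from \cite{kll10}) then forces $(G(t,s)f)(x)=\int_{\CR^d}f(y)\Gamma(t,x;s,y)\,dy$ for every $f\in C_c(\CR^d)$, and, by the uniqueness part above, $p_{t,s}(x,dy)=\Gamma(t,x;s,y)\,dy$. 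Strict positivity of $\Gamma$ then yields the claimed equivalence.

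Item~(2) is handled by a monotone class argument: the collection of bounded Borel $f$ for which $x\mapsto \int f\,dp_{t,s}(x,\cdot)$ is Borel measurable contains $C_b(\CR^d)$ (where it is in fact continuous, being $G(t,s)f$), is a vector space closed under bounded monotone limits, and therefore contains all bounded Borel functions, in particular $\one_A$ for every $A\in\cB(\CR^d)$. Item~(3) is then immediate from item~(1): if $A$ has positive Lebesgue measure, then $p_{t,s}(x,A)=\int_A\Gamma(t,x;s,y)\,dy>0$, while the alternative expression $p_{t,s}(x,A)=(G(t,s)\one_A)(x)$ comes from the integral representation evaluated at $f=\one_A$, which is legitimate by the measurability just established.

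The main technical step is the construction of the positive density $\Gamma$; the rest is either a direct translation of results already quoted from \cite{kll10} or a routine measure-theoretic manipulation.
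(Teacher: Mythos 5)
Your construction is sound in outline, but it is worth being clear about what the paper actually does: its ``proof'' of this lemma is a pure citation --- parts (1) and (2) are \cite[Proposition 2.4]{kll10} and part (3) is \cite[Corollary 2.5]{kll10} --- so what you have written is essentially a reconstruction of the arguments behind the cited results rather than an alternative to the paper's argument. Within that reconstruction, two points deserve care. First, the functional-analytic step is not free: a positive normalized linear functional on $C_b(\CR^d)$ need not be given by a countably additive Borel measure, so the Daniell--Stone/Riesz step hinges on the $\sigma$-continuity (equivalently, tightness) that you only gesture at; likewise the normalization $G(t,s)\one=\one$ is not a maximum-principle fact alone (that only gives $G(t,s)\one\le\one$) but a consequence of the Lyapunov function $V$ preventing loss of mass at infinity. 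In \cite{kll10} both issues are bypassed because the kernel is obtained directly as the increasing limit of the Green measures $g_n(t,s,x,y)\,dy$ of the Dirichlet problems on the balls $B_n$ used to construct $G(t,s)$, so countable additivity, the existence of a density for $s<t$, and its strict positivity all come out of the construction at once; your separate construction of a fundamental solution $\Gamma$ and subsequent identification with $p_{t,s}(x,\cdot)$ via uniqueness also requires checking that $\int f(y)\Gamma(t,x;s,y)\,dy$ lies in the uniqueness class $C_b([s,1]\times\CR^d)\cap C^{1,2}((s,1]\times\CR^d)$ (in particular attains the initial datum), which is again easiest through the same monotone approximation. Your treatment of items (2) and (3) (functional monotone class argument, and reading $(G(t,s)\one_A)(x)$ as the kernel integral) is fine. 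So: correct as a plan, but the genuinely technical content coincides with, and is most efficiently discharged by, the results of \cite{kll10} that the paper simply quotes.
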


\begin{proof}
(1) and (2) are part of \cite[Proposition 2.4]{kll10}, (3) is \cite[Corollary 2.5]{kll10}.
\end{proof}

In view of Lemma \ref{l.Gproperties}, $p_{t,s}(x,dy)$ admits a density with respect to the Lebesgue measure for any $0<s<t\le 1$ and any $x\in\CR^d$, which with a slight abuse of notations we also denote by $p_{t,s}(x,y)$.

By construction of the evolution operator, it is clear that for
$f \in C_b(\CR^d)$, $x \in \CR^d$ and $s \in [0,1)$, the map $ t \mapsto (G(t,s)f)(x)$ is differentiable and $\frac{d}{dt}(G(t,s)f)(x) =
(\A(t)G(t,s)f)(x)$ for $t> s$. In what follows, it will be important to have also information about the derivative with respect to $s$.

\begin{lem}\label{l.sderivative}
Let $\varphi \in C^{1,2}_c(\bar{Q}(0,1))$. Then, for $t \in (0,1]$ and $x \in \CR^d$, the function $s \mapsto (G(t,s)\varphi (s))(x)$
is differentiable in $[0, t]$ and
\begin{eqnarray*}
\partial_s(G(t,s)\varphi(s)) = G(t,s)\partial_s\varphi(s) - G(t,s)\A (s)\varphi (s)\quad \mbox{for all}\, \,s \in [0,t]\,.
\end{eqnarray*}
\end{lem}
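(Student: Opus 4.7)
The plan is to evaluate the difference quotient directly, using the evolution law of $G$ to isolate the $\sigma$-derivative of $G(\sigma,s)\varphi(s,\cdot)$ at the initial time $\sigma=s$. Fix $t\in(0,1]$ and $x\in\CR^d$, and consider $s\in[0,t)$; the left derivative at $s\in(0,t]$ is treated analogously, while the one-sided cases at $s=0,t$ are immediate. For $h>0$ with $s+h\le t$, I write
$$\frac{(G(t,s+h)\varphi(s+h))(x)-(G(t,s)\varphi(s))(x)}{h}=I_1(h)+I_2(h),$$
with
$$I_1(h):=\Bigl(G(t,s+h)\tfrac{\varphi(s+h,\cdot)-\varphi(s,\cdot)}{h}\Bigr)(x),\qquad I_2(h):=\tfrac{(G(t,s+h)\varphi(s,\cdot))(x)-(G(t,s)\varphi(s,\cdot))(x)}{h},$$
the goal being to show $I_1(h)\to (G(t,s)\partial_s\varphi(s,\cdot))(x)$ and $I_2(h)\to -(G(t,s)\A(s)\varphi(s,\cdot))(x)$.

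The term $I_1$ is the easy one. Since $\varphi\in C^{1,2}_c(\bar Q(0,1))$, the quotient $h^{-1}[\varphi(s+h,\cdot)-\varphi(s,\cdot)]$ converges to $\partial_s\varphi(s,\cdot)$ uniformly on $\CR^d$; combined with the $C_b$-contractivity of $G(t,s+h)$ and continuity of $s'\mapsto(G(t,s')g)(x)$ at $s$ for fixed $g\in C_b(\CR^d)$, this yields the claimed limit. For $I_2$, I invoke the evolution law $G(t,s)=G(t,s+h)G(s+h,s)$ to rewrite
$$I_2(h)=\Bigl(G(t,s+h)\,\tfrac{\varphi(s,\cdot)-G(s+h,s)\varphi(s,\cdot)}{h}\Bigr)(x).$$
Set $\psi:=\varphi(s,\cdot)\in C^2_c(\CR^d)$ and $u(\sigma,y):=(G(\sigma,s)\psi)(y)$. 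Interior parabolic Schauder estimates, available thanks to the local H\"older regularity of the coefficients in Hypothesis \ref{hyp1}, show that $u\in C^{1+\varsigma/2,2+\varsigma}_{\rm loc}$ up to the initial time $\sigma=s$, with $\partial_\sigma u(s,y)=(\A(s)\psi)(y)$. In particular, $h^{-1}[\psi-G(s+h,s)\psi]$ converges to $-\A(s)\psi$ pointwise and uniformly on compact subsets of $\CR^d$.

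The main obstacle is passing this locally uniform convergence through $G(t,s+h)$ in the integral representation
$$I_2(h)=\int_{\CR^d}\tfrac{\psi(y)-(G(s+h,s)\psi)(y)}{h}\,p_{t,s+h}(x,dy),$$
since $G(s+h,s)\psi$ typically has full support in $\CR^d$ even when $\psi$ does not, so that the difference quotient is not uniformly bounded on $\CR^d$ in $h$. To handle this I use the Lyapunov structure of Hypothesis \ref{hyp1}(3): the inequality $\A(\sigma)V\le M$ yields $(G(t,s+h)V)(x)\le V(x)+M$ uniformly in $h$, hence the tightness bound $p_{t,s+h}(x,\{V>N\})\le(V(x)+M)/N$. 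Splitting the integral over a sublevel set of $V$ and its complement, applying the locally uniform convergence of the difference quotient on the former, and controlling the tail on the latter via the integral identity
$$G(s+h,s)\psi(y)-\psi(y)=\int_s^{s+h}(\A(\sigma)u(\sigma,\cdot))(y)\,d\sigma$$
together with the Lyapunov tail estimate, one passes to the limit and obtains $I_2(h)\to -(G(t,s)\A(s)\psi)(x)$. Combined with the convergence of $I_1$, this gives the asserted formula. The delicate interplay between interior parabolic regularity of the forward Cauchy problem at the initial time and Lyapunov-based tightness of the transition probabilities is the technical heart of the argument.
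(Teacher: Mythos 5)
Your decomposition of the difference quotient and your treatment of $I_1$ coincide with the paper's proof; the divergence is in $I_2$, where the paper simply applies \cite[Lemma 3.2]{kll10} with $\psi:=\varphi(s,\cdot)\in C^2_c(\CR^d)$, while you attempt to re-derive that ingredient from scratch, and the re-derivation has genuine gaps. First, interior Schauder estimates give regularity of $u(\sigma,\cdot)=G(\sigma,s)\psi$ only for $\sigma>s$; regularity of class $C^{1+\varsigma/2,2+\varsigma}$ \emph{up to the initial time} requires the initial datum to be locally $C^{2+\varsigma}$, whereas $\psi$ is merely $C^2_c$. So neither $\partial_\sigma u(s,y)=(\A(s)\psi)(y)$ nor the locally uniform convergence of $h^{-1}[\psi-G(s+h,s)\psi]$ follows from the tools you invoke; repairing this by approximating $\psi$ in $C^2$ by smoother functions would itself need a bound of the form $\|h^{-1}(G(s+h,s)g-g)\|_{C(K)}\lesssim\|g\|_{C^2}$ uniform in $h$, which you have not established. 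Second, and more seriously, the limit interchange in $\int_{\CR^d}h^{-1}[\psi-G(s+h,s)\psi]\,p_{t,s+h}(x,dy)$ is not justified: your proposed domination via $G(s+h,s)\psi-\psi=\int_s^{s+h}\A(\sigma)u(\sigma,\cdot)\,d\sigma$ produces an integrand involving the \emph{unbounded} coefficients multiplied by derivatives of $u$, and Hypothesis \ref{hyp1}(3) gives no comparison of that quantity with $V$; the Lyapunov bound $G(t,s+h)V\le V+M$ controls $\int V\,dp_{t,s+h}(x,\cdot)$ only, so the uniform integrability of the difference quotients on $\{V>N\}$ does not follow (nor is a global sup bound available: the maximum principle controls $\|u(\sigma)\|_\infty$, not $\|\A(\sigma)u(\sigma)\|_\infty$).

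A smaller but real issue is that the left derivative is not ``analogous'' in the way you suggest: for $h<0$ the evolution law gives $I_2(h)=\bigl(G(t,s)\,h^{-1}[G(s,s+h)\psi-\psi]\bigr)(x)$, so one needs differentiability of $\sigma\mapsto G(s,\sigma)\psi$ at $\sigma=s^-$, i.e.\ precisely the diagonal case of the statement being proven, which interior regularity of the forward problem does not deliver. The paper sidesteps all of this because \cite[Lemma 3.2]{kll10} (proved there through the construction of $G$ as a limit of Cauchy--Dirichlet problems on balls) already covers time-independent $\psi\in C^2_c(\CR^d)$; the present lemma only adds the time dependence of $\varphi$, which is your $I_1$ term. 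A self-contained proof of the $I_2$ ingredient would require a representation in which the relevant integrand is $\A(\sigma)\psi$ (compactly supported, hence bounded) rather than $\A(\sigma)u(\sigma,\cdot)$ --- for instance Dynkin's formula $G(s+h,s)\psi-\psi=\expect\int_s^{s+h}(\A(\sigma)\psi)(X_\sigma)\,d\sigma$, or the approximating Dirichlet evolution families of \cite{kll10} --- giving an $h$-uniform global bound on the difference quotient; interior Schauder estimates plus Lyapunov tails do not suffice.
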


\begin{proof}
In the case where $\varphi$ is replaced with a function $\psi \in C_c^2(\CR^d)$, the result was proved in
\cite[Lemma 3.2]{kll10}.
To prove the general case, fix $t\in (0,1]$, $s \in [0,t]$ and $x\in\CR^d$. Then, for $h \in \CR$ such that $s+h \in [0,t]$ we have
\begin{equation}\label{eq.differencequotient}
\begin{aligned}
& \frac{(G(t,s+h)\varphi (s+h))(x)- (G(t,s)\varphi (s))(x)}{h}\\
 = &\bigg [G(t,s+h)\frac{\varphi (s+h)- \varphi (s)}{h}\bigg ](x)+
\frac{(G(t,s+h)\varphi (s))(x) - (G(t,s)\varphi (s))(x)}{h}\,.
\end{aligned}
\end{equation}
Applying \cite[Lemma 3.2]{kll10} with $\psi (x) := \varphi (s,x)$, it follows that the second term on the right hand side of \eqref{eq.differencequotient}
converges to $-(G(t,s)\A(s)\varphi (s))(x)$ as $h \to 0$. To handle the first term, let us write $\Delta_h(y) := h^{-1}(\varphi (s+h, y) - \varphi (s,y))$.
Since $\partial_s\varphi$ is uniformly continuous, it follows that $\Delta_h \to \partial_s\varphi(s)$, uniformly on $\CR^d$.
Using that $\|G(t,s+h)\|_{\cL (C_b(\CR^d))} = 1$, we see that
\begin{align*}
&|(G(t,s+h)\Delta_h)(x) - (G(t,s)\partial_s\varphi(s))(x)|\\
\leq & \|\Delta_h - \partial_s\varphi(s)\|_\infty + |(G(t,s+h)\partial_s\varphi(s))(x) - (G(t,s)\partial_s\varphi(s))(x)| \to 0\,,
\end{align*}
as $h \to 0$, since the last term in the equation above tends to $0$ as $h \to 0$ as a consequence of \cite[Lemma 3.2]{kll10}.
\end{proof}

Let us note some properties of evolution systems of measures.

\begin{lem}\label{l.simproperties}
Let $t \in (0,1]$ and $(\mu_s)_{s\in [0,t]}$ be an evolution system of measures. Then
\begin{enumerate}
\item
for $s<t$, the measure $\mu_s$ is absolutely continuous with respect to Lebesgue measure;
\item
for $A \in \cB(\CR^d)$, the map $[0,t] \ni s \mapsto \mu_s(A)$ is Borel measurable.
\end{enumerate}
\end{lem}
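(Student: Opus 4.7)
The plan is to transfer both claims from the measure $\mu_s$ to the explicit representation provided by the evolution family. The defining relation $G(t,s)^*\mu_t = \mu_s$ gives, after approximating indicators by continuous functions,
\[ \mu_s(A) = \int_{\CR^d} p_{t,s}(x, A)\, d\mu_t(x), \qquad A \in \cB(\CR^d), \]
where $p_{t,s}(x, A) = (G(t,s)\one_A)(x)$ as in Lemma~\ref{l.Gproperties}. Part (1) is then immediate: if $A \in \cB(\CR^d)$ has zero Lebesgue measure, Lemma~\ref{l.Gproperties}(1) ensures $p_{t,s}(x, A)=0$ for every $x \in \CR^d$ (the measure $p_{t,s}(x,\cdot)$ being equivalent to Lebesgue measure), so integrating against $\mu_t$ yields $\mu_s(A) = 0$.

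For part (2) I would invoke Dynkin's $\pi$-$\lambda$ theorem. Let $\cM := \{A \in \cB(\CR^d) : s \mapsto \mu_s(A) \text{ is Borel measurable}\}$. Using $\mu_s(\CR^d)=1$, finite additivity, and $\sigma$-additivity (combined with the fact that pointwise limits of measurable functions are measurable), one verifies that $\cM$ is a $\lambda$-system. Since the open sets form a $\pi$-system generating $\cB(\CR^d)$, it is enough to show that every open set $U$ lies in $\cM$. Pick $f_n \in C_c^\infty(\CR^d)$ with $0 \leq f_n \uparrow \one_U$; monotone convergence gives $\mu_s(U) = \lim_n \int_{\CR^d} G(t,s)f_n\, d\mu_t$. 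For each $n$, applying Lemma~\ref{l.sderivative} with the time-independent choice $\varphi(s,x) := f_n(x)$ produces differentiability, hence continuity, of $s \mapsto (G(t,s)f_n)(x)$ for every $x \in \CR^d$. The uniform bound $\|G(t,s)f_n\|_\infty \leq 1$, together with finiteness of $\mu_t$ and dominated convergence, upgrades this to continuity of $s \mapsto \int_{\CR^d} G(t,s)f_n\, d\mu_t$. Consequently $s \mapsto \mu_s(U)$ emerges as a pointwise limit of continuous functions and is therefore Borel measurable.

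The only slightly nontrivial ingredient is the continuity in $s$ of $s \mapsto (G(t,s)f_n)(x)$, which I expect to be the main obstacle; but this is already provided by Lemma~\ref{l.sderivative} applied to a time-independent test function, so all the remaining work amounts to routine $\pi$-$\lambda$ bookkeeping and interchanging limits with integration.
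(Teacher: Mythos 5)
Your proposal is correct and follows essentially the same route as the paper: part (1) is verbatim the paper's argument via $\mu_s(A)=\int_{\CR^d}p_{t,s}(x,A)\,d\mu_t(x)$ and the equivalence of $p_{t,s}(x,\cdot)$ with Lebesgue measure, and part (2) rests on the same key fact, the continuity of $s\mapsto (G(t,s)f)(x)$ for smooth compactly supported $f$ (Lemma~\ref{l.sderivative} with a time-independent test function is exactly the cited result from \cite{kll10}), combined with approximation of indicators and a generating-class argument. The only cosmetic differences are that you approximate open sets from below and invoke Dynkin's $\pi$-$\lambda$ theorem, whereas the paper approximates open balls from above and uses a monotone class argument; both close the gap in the same way.
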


\begin{proof}
(1) It follows from the structure of $G(t,s)$ that $\mu_s(A) = \big[G(t,s)^*\mu_t\big](A)=
\int_{\CR^d} p_{t,s}(x,A)\,d\mu_{t}(x)$. If $A \in \cB(\CR^d)$ has
Lebesgue measure zero, then $p_{t,s}(x,A) \equiv 0$ for $s<t$ by Lemma \ref{l.Gproperties}(1). Thus $\mu_s(A) = 0$.

(2) By \cite[Lemma 3.2]{kll10}, for $f \in C^2_c(\CR^d)$ the function $s \mapsto (G(t,s)f)(x)$ is continuous. Thus, for such a function $f$ and a probability measure $\nu$
also the function
\begin{eqnarray*}
s \mapsto \int_{\CR^d} G(t,s)f\, d\nu
\end{eqnarray*}
is continuous, which follows easily from dominated convergence. Now, let $B$ be an open ball in $\CR^d$. Then, there exists a uniformly bounded sequence $f_n$ of $C^2_c$-functions
such that $f_n \downarrow \one_B$. By dominated convergence,
\begin{eqnarray*}
\mu_s(B) = \int_{\CR^d}G(t,s)\one_B\, d\mu_t = \lim_{n\to \infty}\int_{\CR^d} G(t,s)f_n\, d\mu_t
\end{eqnarray*}
is measurable as the pointwise limit of continuous functions. A monotone class argument shows that $s \mapsto \mu_s(A)$ is measurable for all $A \in \cB(\CR^d)$.
\end{proof}

\begin{lem}\label{l.differential}
Fix $t\in (0,1]$ and let $(\mu_s)_{s \in [0,t]}$ be an evolution system of measures, $\varphi \in C^{1,2}_c(\bar{Q}(0,t))$
and $0\leq a<b\leq t$. Then
\begin{eqnarray*}
\int_a^b \int_{\CR^d}\big [\partial_s \varphi  - \A (s)\varphi\big ] \, d\mu_s\, ds =
\int_{\CR^d} \varphi (b)\, d\mu_b - \int_{\CR^d}\varphi (a)\, d\mu_a\,.
\end{eqnarray*}
\end{lem}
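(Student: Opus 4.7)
The key idea is that the evolution-system property $G(t,s)^*\mu_t=\mu_s$ lets us push every measure $\mu_s$ back to $\mu_b$, after which Lemma \ref{l.sderivative} recognizes the integrand as a derivative in $s$, and the result follows from the fundamental theorem of calculus. Concretely, since $(\mu_s)_{s\in[0,t]}$ is an evolution system and $a\le s\le b\le t$, the evolution law $G(t,b)G(b,s)=G(t,s)$ yields
\[
G(b,s)^*\mu_b = G(b,s)^*G(t,b)^*\mu_t = G(t,s)^*\mu_t = \mu_s,
\]
and in particular $G(b,a)^*\mu_b=\mu_a$. Using this identity, the left-hand side of the claimed equality can be rewritten as
\[
\int_a^b\!\!\int_{\CR^d} G(b,s)\bigl[\partial_s\varphi(s)-\A(s)\varphi(s)\bigr](x)\,d\mu_b(x)\,ds.
\]

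Now I would apply Lemma \ref{l.sderivative}, with $b$ playing the role of the final time (the proof given there works verbatim on $\bar Q(0,b)$ and $\varphi\in C^{1,2}_c(\bar Q(0,t))$ restricts to such a function). This identifies the integrand pointwise in $(s,x)$ with
\[
\partial_s\bigl[G(b,s)\varphi(s)\bigr](x).
\]
Before swapping integrals I would justify Fubini: because $\varphi$ has compact support in $x$ and $q_{ij},F_j\in C^{\varsigma/2,\varsigma}_{\loc}$, the function $\partial_s\varphi-\A(s)\varphi$ is bounded on $[a,b]\times\CR^d$, and since each $G(b,s)$ is a contraction on $C_b(\CR^d)$, the map $(s,x)\mapsto\partial_s[G(b,s)\varphi(s)](x)$ is bounded on $[a,b]\times\CR^d$; combined with $\mu_b(\CR^d)=1$, this makes it integrable against $ds\otimes d\mu_b$.

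After the Fubini interchange, the inner integral in $s$ is telescoped by the fundamental theorem of calculus:
\[
\int_a^b\partial_s\bigl[G(b,s)\varphi(s)\bigr](x)\,ds = G(b,b)\varphi(b,x)-G(b,a)\varphi(a,x)=\varphi(b,x)-\bigl[G(b,a)\varphi(a)\bigr](x).
\]
Integrating this identity against $\mu_b$ and using $G(b,a)^*\mu_b=\mu_a$ on the second term gives exactly the right-hand side of the lemma. The only genuinely delicate point is the Fubini justification and the measurability of $(s,x)\mapsto\partial_s[G(b,s)\varphi(s)](x)$; but continuity in $s$ (for each fixed $x$) is supplied by Lemma \ref{l.sderivative}, while Borel measurability in $x$ follows from the representation $G(b,s)\psi(\cdot)=\int\psi\,p_{b,s}(\cdot,dy)$ and Lemma \ref{l.Gproperties}(2), so no obstacle remains.
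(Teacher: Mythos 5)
Your proposal is correct and follows essentially the same route as the paper: rewrite $\mu_s=G(b,s)^*\mu_b$ via \eqref{eq.esmchar}, identify the integrand as $\partial_s\bigl(G(b,s)\varphi(s)\bigr)$ through Lemma \ref{l.sderivative}, and integrate in $s$ using $G(b,a)^*\mu_b=\mu_a$. The only difference is that you spell out the Fubini/measurability step that the paper leaves implicit, which is a harmless (and welcome) addition.
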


\begin{proof}
Using Lemma \ref{l.sderivative} and Equation \eqref{eq.esmchar}, we find
\begin{align*}
\int_{\CR^d}\big [\partial_s\varphi (s) - \A (s)\varphi (s)\big ] \, d\mu_s
= & \int_{\CR^d}G(b,s)\Big[ \partial_s \varphi (s) - \A (s)\varphi (s) \Big]\, d\mu_b\\[1mm]
= & \int_{\CR^d}\partial_s (G(b,s)\varphi (s))\, d\mu_b\, .
\end{align*}
Integrating this equality from $a$ to $b$ and using \eqref{eq.esmchar} again, the claim follows.
\end{proof}

Given an evolution system of measures $(\mu_s)_{s \in [0,t]}$, it is a consequence of Lemma \ref{l.simproperties} that
we can define
\begin{equation}\label{eq.nu}
\nu (A\times B) := \int_A \mu_s(B)\, ds\,,
\end{equation}
for $A \in \cB ((0,t))$ and $B \in \cB (\CR^d)$. We may then extend $\nu$ in a straightforward way to a Borel measure on
$(0,t)\times \CR^d$.
\begin{rem}
Lemma \ref{l.differential} yields in particular that for $\varphi  \in C_c^{1,2}(Q(a,b))$ we have
\begin{eqnarray*}
\int_{Q(a,b)} (\partial_s - \A)\varphi \, d\nu = 0\, ,
\end{eqnarray*}
so that $(\partial_s - \A)^*\nu = 0$ in the sense of \cite{bkr06}. Thus the local regularity results obtained in that reference are available, which show, in particular, that $\nu$ is absolutely continuous with respect to the
Lebesgue measure on $(0,t)\times \CR^d$ and its density $\rho: (0,t)\times \CR^d \to [0, \infty)$ (with respect to
the Lebesgue measure on $(0,t)\times\CR^d$) is locally $\gamma$-H\"older continuous in $(0,t)\times\CR^d$ of any exponent $\gamma\in (0,1)$.
\end{rem}

\subsection{Time dependent Lyapunov functions}
Similar as in \cite{s08,alr10} we use time dependent Lyapunov functions to prove kernel estimates. We will use the time-independent
Lyapunov function $V$ from Hypothesis \ref{hyp1} to ensure certain integrability properties of the time dependent Lyapunov functions.

\begin{defn}
\label{defn-2.6}
Let $t \in (0,1]$. A \emph{time dependent Lyapunov function $($on $[0,t])$} is a function
$0 \leq W \in C(\bar{Q}(0,t))\cap C^{1,2}(Q(0,t))$ such that
\begin{enumerate}
\item
$W(s,x) \leq V(x)$ for all $(s,x) \in [0,t]\times \CR^d$;
\item
$\lim_{|x|\to \infty}W(s,x) = \infty$, uniformly for $s$ in compact subsets of $[0,t)$;
\item there exists a function $0\leq h \in L^1((0,t), ds)$ such that
\begin{eqnarray*}
\partial_s W(s,x) -(\A(s)W(s))(x) \geq -h(s)W(s,x)
\end{eqnarray*}
and
\begin{eqnarray*}
\partial_sW(s,x) - (\eta \Delta W(s,x)+F(s,x)\cdot \nabla W(s,x)) \geq - h(s)W(s,x)\,,
\end{eqnarray*}
for all $(s,x) \in (0,t)\times \CR^d$.
\end{enumerate}
To stress the dependence on $h$, we will sometimes say that $W$ is a time dependent Lyapunov function \emph{with respect to $h$}.
\end{defn}

We now prove the following extension of \cite[Proposition 2.3]{s08} to the nonautonomous setting.

\begin{prop}\label{p.lyapunov}
Let $t \in (0,1]$ and $(\mu_s)_{s \in [0,t]}$ be an evolution system of measures such that $V \in L^1(\mu_t)$. Moreover, let $W$ be a time dependent
Lyapunov function on $[0,t]$ $($with respect to $h)$. Then, $W(s) \in L^1(\mu_s)$ for $s \in [0,t]$ and we have
\begin{equation}\label{eq.lyapunovest}
\int_{\CR^d} W(s)\, d\mu_s \leq e^{\int_s^t h(\tau)\, d\tau} \int_{\CR^d} W(t)\, d\mu_t\,.
\end{equation}
\end{prop}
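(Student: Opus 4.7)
The plan is to use a spatial cutoff $\eta_n W$ as a test function in Lemma \ref{l.differential} and then pass to the limit $n\to\infty$, reducing the assertion to a backward Gronwall inequality for $f(s) := \int_{\CR^d} W(s)\, d\mu_s$. Before doing this I would carry out a preliminary step with $W$ replaced by the time-independent Lyapunov function $V$: applying the same cutoff scheme, using the bound $\A(s)V\leq M$ from Hypothesis \ref{hyp1}(3), and exploiting the companion estimate $\eta\Delta V + F\cdot\nabla V\leq M$ to absorb the cross-term $2\langle Q\nabla\eta_n,\nabla V\rangle$ that the cutoff introduces, I would show that $V\in L^1(\mu_s)$ uniformly for $s\in[0,t]$. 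Since $W(s,\cdot)\leq V$, this both guarantees that the integrals in \eqref{eq.lyapunovest} are finite and provides the dominating majorant needed to justify the subsequent limits.

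Next, for fixed $0<a<b<t$, I would choose $\eta_n\in C_c^\infty(\CR^d)$ with $\eta_n=1$ on $B_n$, $\supp\eta_n\subset B_{2n}$, and $|\nabla\eta_n|+|D^2\eta_n|\to 0$ uniformly. Since $W\in C^{1,2}(Q(0,t))$, the product $\eta_n W$ lies in $C_c^{1,2}(\bar{Q}(a,b))$, so Lemma \ref{l.differential} is applicable (after, if needed, a smooth extension in time to $[0,t]$). Using the identity
\[
\partial_s(\eta_n W)-\A(s)(\eta_n W)=\eta_n[\partial_s W-\A(s)W]-W\A(s)\eta_n-2\langle Q(s,\cdot)\nabla\eta_n,\nabla W\rangle
\]
together with the first Lyapunov inequality, which gives $\eta_n[\partial_s W-\A(s)W]\geq -h(s)\eta_nW$, I would obtain
\[
\int_{\CR^d}\eta_n W(a)\,d\mu_a\leq \int_{\CR^d}\eta_n W(b)\,d\mu_b+\int_a^b h(s)\!\int_{\CR^d}\eta_n W\,d\mu_s\,ds+R_n(a,b),
\]
where $R_n(a,b)$ collects error terms supported in the annulus $B_{2n}\setminus B_n$, namely the integrals of $W\A(s)\eta_n$ and $2\langle Q\nabla\eta_n,\nabla W\rangle$ against $\mu_s\,ds$.

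The hard part will be showing $R_n(a,b)\to 0$ as $n\to\infty$, since the coefficients $q_{ij}$ and $F_j$ need not be bounded. Here I would again invoke the second part of Hypothesis \ref{hyp1}(3) to absorb the drift contribution $W F\cdot\nabla\eta_n$, control the pure second-order piece using the uniform smallness of $D^2\eta_n$ together with the integrability of $W\leq V$ against $\mu_s$, and handle the cross term $\langle Q\nabla\eta_n,\nabla W\rangle$ via the $C^{1,2}$ regularity of $W$ and dominated convergence on the shrinking annular support. Once $R_n\to 0$ is established, monotone convergence yields $\int \eta_n W\,d\mu_s\uparrow f(s)$, and in the limit
\[
f(a)\leq f(b)+\int_a^b h(s)f(s)\,ds,\qquad 0<a<b<t.
\]
A standard backward Gronwall argument then produces $f(a)\leq e^{\int_a^b h(\tau)\,d\tau}f(b)$, and passing $a\downarrow s$ and $b\uparrow t$ using the continuity of $W$ on $\bar{Q}(0,t)$ together with dominated convergence against $V\in L^1(\mu_t)$ gives \eqref{eq.lyapunovest}.
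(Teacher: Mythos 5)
There is a genuine gap at the step you yourself flag as ``the hard part'': with a spatial cutoff $\eta_n$ the error term
$R_n$ contains $W\,\sum_{i,j}q_{ij}D_{ij}\eta_n$, $2\langle Q\nabla \eta_n,\nabla W\rangle$ and $W\,F\cdot\nabla\eta_n$ on the annulus $B_{2n}\setminus B_n$, and Hypothesis \ref{hyp1} gives you \emph{no} growth or integrability control on $Q$ and $F$ there. Uniform smallness of $\nabla\eta_n$ and $D^2\eta_n$ is useless when multiplied by coefficients that may grow arbitrarily fast (nothing in the hypotheses prevents $q_{ij}$ or $|F|$ from growing like $e^{|x|^2}$), and the only integrable majorant you have, $V\in L^1(\mu_s)$, does not dominate $q_{ij}V$, $|Q\nabla W|$ or $|F|V$. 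The appeal to the second inequality in Hypothesis \ref{hyp1}(3) to ``absorb'' $W\,F\cdot\nabla\eta_n$ does not work either: that inequality constrains $F\cdot\nabla V$, not $F\cdot\nabla\eta_n$, and $\nabla\eta_n$ is in general unrelated to $\nabla V$; moreover $F\cdot\nabla\eta_n$ has no sign. The same objection applies to your preliminary step for $V$: the paper does not re-derive $V\in L^1(\mu_s)$ by a cutoff argument, it quotes the bound $G(t,s)V\le V+M(t-s)$ already proved in \cite[Lemma 3.4]{kll10} and integrates it against $\mu_t$.

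The paper's way around this is to truncate in the \emph{range} of $W$ rather than in space: since $W(s,x)\to\infty$ as $|x|\to\infty$ uniformly on compact subsets of $[0,t)$, the composition $W_n=\psi_n\circ W$ with concave $\psi_n\in C^\infty$, $\psi_n(\tau)=\tau$ on $[0,n]$, $\psi_n\equiv\mathrm{const}$ for $\tau\ge n+1$, $0\le\psi_n'\le1$, $\psi_n''\le0$, is a compactly supported function plus a constant, so Lemma \ref{l.differential} applies, and the chain rule produces
$\partial_sW_n-\A W_n=\psi_n'(W)(\partial_sW-\A W)-\psi_n''(W)\langle Q\nabla W,\nabla W\rangle$, where the extra term is \emph{nonnegative} by concavity and ellipticity; no uncontrolled remainder appears at all. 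After that one lets $r\uparrow t$ (using weak continuity of $s\mapsto\mu_s$, tightness via Prokhorov, and the boundedness of $W_n$ --- note that your own limit $b\uparrow t$ is also too quick, since the measure $\mu_b$ changes with $b$ and cannot be dominated ``against $V\in L^1(\mu_t)$''), then lets $n\to\infty$ by dominated convergence with majorant $h\,V$, and concludes with the same backward Gronwall argument you propose. Unless you can supply an additional hypothesis controlling $Q$ and $F$ on annuli, the spatial-cutoff route cannot be completed, and the range-truncation is the missing idea.
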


\begin{proof}
Let us first note that
\begin{equation}\label{eq.vest}
\int_{\CR^d}V\, d\mu_s \leq \int_{\CR^d} V\, d\mu_t+ M(t-s)\, ,
\end{equation}
where $M$ is defined in Hypothesis \ref{hyp1}(3).
Indeed, it was shown in the proof of \cite[Lemma 3.4]{kll10} that  for $s<t$ and $x\in\CR^d$ we have
\begin{eqnarray*}
(G(t,s)V)(x) \leq V(x) + M (t-s)\, ,
\end{eqnarray*}
which is precisely \eqref{eq.vest} in the case where $\mu_t = \delta_x$. The general case follows by integrating the above inequality with respect
to the given probability measure $\mu_t$ and using the formula $G(t,s)^*\mu_t =\mu_s$.
Since $W(s) \leq V$ for any $s\in [0,t]$, it follows that $W(s) \in L^1(\mu_s)$ and $\int_{\CR^d} W(s)\, d\mu_s \leq \int_{\CR^d} V\, d\mu_t + M =:\tilde{M}$.\medskip

Now, fix $0\leq s<r < t$ and pick a sequence of functions $\psi_n \in C^{\infty}([0,\infty))$ such that
\begin{itemize}
\item[(i)] $\psi_n(\tau) = \tau$ for $\tau \in [0,n]$;
\item[(ii)] $\psi_n(\tau) \equiv$ const. for $\tau\geq n+1$;
\item[(iii)] $0\leq \psi_n'\leq 1$ and $\psi_n''\leq 0$.
\end{itemize}
Since $W(\sigma,x) \to \infty$ as $|x| \to \infty$ uniformly on $[0,r]$, the restriction to $\bar Q(0,r)$ of the function
$W_n:= \psi_n\circ W$ can be written as a function in $C_c^{1,2}(Q(0,r))$ plus a constant function. As the assertion of Lemma \ref{l.differential} is obviously true for constants, we can apply it to $W_n$ and obtain
\begin{align}
&\int_{\CR^d} W_n(r) \, d\mu_r - \int_{\CR^d} W_n(s)\, d\mu_s\notag\\
=&\int_s^r\int_{\CR^d}\big [\partial_{\sigma}W_n(\sigma)-\A(\sigma)W_n(\sigma)\big ]\,d\mu_{\sigma}\,d\sigma\notag\\
= & \int_s^r \int_{\CR^d} \big [\psi_n'(W(\sigma))\big (\partial_\sigma W(\sigma) - \A (\sigma)W(\sigma)\big)\big ]\, d\mu_{\sigma}\, d\sigma\notag\\
& - \int_s^r \int_{\CR^d} \big [\psi_n''(W(\sigma))\big(Q(\sigma)\nabla W(\sigma)\cdot \nabla W(\sigma)\big)\big ]
 \, d\mu_{\sigma}\, d\sigma\notag\\
\geq & -\int_s^r \int_{\CR^d}\psi_n'(W(\sigma))h(\sigma)W(\sigma)\, d\mu_\sigma\, d\sigma\, .
\label{eq.last}
\end{align}

Let us fix an increasing sequence $(r_k)\in [0,t)$ converging to $t$ as $k\to\infty$. From \eqref{eq.last} with $r=r_k$, we get
\begin{align*}
\int_{\CR^d} W_n(r_k) \, d\mu_{r_k} - \int_{\CR^d} W_n(s)\, d\mu_s
\ge  -\int_s^{r_k} \int_{\CR^d}\psi_n'(W(\sigma))h(\sigma)W(\sigma)\, d\mu_\sigma\, d\sigma\, .
\end{align*}
We now want to let $k$ tend to $\infty$. Clearly, we have only to discuss the convergence of the first term in the left-hand side of
the previous inequality. Note that
\begin{align}
& \int_{\CR^d} W_n(r_k) \, d\mu_{r_k}
-\int_{\CR^d}W_n(t) \, d\mu_t\notag\\
=&\int_{\CR^d}\big [W_n(r_k)-W_n(t)\big ] \, d\mu_{r_k}+\int_{\CR^d}W_n(t)\,d\mu_{r_k}-\int_{\CR^d}W_n(t)\,d\mu_{t}\notag\\
=&\int_{\CR^d}\big [W_n(r_k)-W_n(t)\big ] \, d\mu_{r_k}+\int_{\CR^d}\big [G(t,r_k)W_n(t)-W_n(t)\big ]\,d\mu_{t}\,.
\label{eq.last-0}
\end{align}
By \cite[Theorem 3.7]{kll10} the function $G(t, \cdot)f$ is continuous in $[0,t]\times\CR^d$ for any $f\in C_b(\CR^d)$.
This shows first that the second term in the last side of \eqref{eq.last-0} tends to $0$ as $k\to\infty$.
Moreover, it follows that the map $s \mapsto G(t,s)^*\mu_t$ is weakly continuous. Thus the set of measures $\{\mu_s: s \in [0,t]\}$ is weakly compact and hence, by Prokhorov's Theorem, tight.
Consequently, given $\varepsilon>0$, there exists $m>0$ such that $\mu_{r_k}(\CR^d\setminus B_m)\le \varepsilon$ for every $k\in\CN$.
We can thus estimate
\begin{align*}
&\left |\int_{\CR^d}\big [W_n(r_k)-W_n(t)\big ] \, d\mu_{r_k}\right |\\
\le &\int_{B_m}|W_n(r_k)-W_n(t)|\,d\mu_{r_k}
+\int_{\CR^d\setminus B_m}|W_n(r_k)-W_n(t)|\,d\mu_{r_k}\\
\le &\sup_{x\in B_m}|W_n(r_k,x)-W_n(t,x)|
+\|W_n(r_k)-W_n(t)\|_{\infty}\mu_{r_k}(\CR^d\setminus B_m)\\
\le &\sup_{x\in B_m}|W_n(r_k,x)-W_n(t,x)|
+2\|W_n\|_{\infty}\eps\,,
\end{align*}
for any $k\in\CN$. Since $W$ is continuous and $\eps>0$ is arbitrary, it follows that
\begin{align*}
\lim_{k\to\infty}\int_{\CR^d}\big [W_n(r_k)-W_n(t)\big ] \, d\mu_{r_k}=0\,.
\end{align*}
Summing up, we have proved that
\begin{eqnarray*}
\lim_{k\to\infty}\int_{\CR^d} W_n(r_k) \, d\mu_{r_k}
=\int_{\CR^d}W_n(t) \, d\mu_t
\end{eqnarray*}
and, consequently,
\begin{align}
\int_{\CR^d} W_n(t) \, d\mu_t - \int_{\CR^d} W_n(s)\, d\mu_s
\ge  -\int_s^t \int_{\CR^d}\psi_n'(W(\sigma))h(\sigma)W(\sigma)\, d\mu_\sigma\, d\sigma\, .
\label{eq.last-1}
\end{align}

Since
\begin{eqnarray*}
\big|\psi_n'(W(\sigma,x)) h(\sigma)W(\sigma,x)| \leq h(\sigma) V(x)
\end{eqnarray*}
for any $\sigma\in [s,t]$ and any $x\in\CR^d$,
and
\begin{eqnarray*}
\int_s^t\int_{\CR^d} h(\sigma)V\, d\mu_\sigma \, d\sigma \leq \tilde{M}\int_s^t h(\sigma)\, d\sigma < \infty\,,
\end{eqnarray*}
we can use dominated convergence in \eqref{eq.last-1} and obtain upon $n\to\infty$
\begin{eqnarray*}
\int_{\CR^d} W(t)\, d\mu_t -\int_{\CR^d}W(s)\, d\mu_s \geq  - \int_s^t h(\sigma) \int_{\CR^d} W(\sigma)\, d\mu_\sigma\, d\sigma\, .
\end{eqnarray*}
Writing $\zeta (s) := \int_{\CR^d} W(s)\, d\mu_s$ we have thus proved that
\begin{eqnarray}\label{eq2-7'}
\zeta (t) - \zeta (s) \geq -\int_s^t h(\tau) \zeta (\tau) \, d\tau\, .
\end{eqnarray}
We claim that this implies \eqref{eq.lyapunovest}. Indeed, the function $\Phi$, defined by
\begin{eqnarray*}
\Phi (\tau) := \Big( \zeta(t) + \int_\tau^t h(\sigma) \zeta (\sigma)\, d\sigma \Big) e^{\int_s^\tau h(\sigma)\, d\sigma}\,,
\end{eqnarray*}
is continuous on $[s,t]$ and therein weakly differentiable with
\begin{eqnarray*}
\Phi'(\tau) =  h(\tau) \bigg( \zeta (t) + \int_\tau^t h(\sigma)\zeta (\sigma)\, d\sigma -\zeta(\tau)\bigg)e^{\int_s^\tau h(\sigma)\, d\sigma}
\geq 0\, ,
\end{eqnarray*}
by (\ref{eq2-7'}). Thus, $\Phi$ is increasing
and, using again \eqref{eq2-7'}, we find
\begin{eqnarray*}
\zeta(s) \leq \Phi (s) \leq \Phi (t) = \zeta (t)e^{\int_s^t h (\sigma)\, d\sigma} \, .
\end{eqnarray*}
The proof is now complete.
\end{proof}

\subsection{Approximation of the coefficients}\label{subsect.approx}

In the proof of our main result we will approximate the diffusion coefficients $q_{ij}$ ($i,j=1,\ldots,d$)
with bounded diffusion coefficients $q_{ij}^{(n)}$ so that we can apply Theorem \ref{t.auxiliary} below, which assumes bounded diffusion coefficients.
Let us now describe how we construct the coefficients $q_{ij}^{(n)}$.

Suppose that we are given $t \in (0,1]$ and a time dependent Lyapunov function $W$ on $[0,t]$.
We pick a function $\varphi \in C_c^\infty (\CR)$ such that $\one_{(-1,1)} \leq \varphi \leq \one_{(-2,2)}$ and such that
$|t\varphi'(t)| \leq 2$ for all $t \in \CR$.
We then define
\begin{eqnarray*}
\varphi_n (s,x) := \varphi \bigg( \frac{W(s,x)}{n}\bigg)\,,
\end{eqnarray*}
for any $(s,x)\in [0,t]\times\CR^d$ and any $n\in\CN$,
and set
\begin{equation}\label{eq.qn}
q_{ij}^{(n)}(s,x) := \varphi_n (s,x) q_{ij}(s,x) + (1-\varphi_n(s,x))\eta \delta_{ij}\,,
\end{equation}
for any $i,j=1,\ldots,d$, where $\delta_{ij}$ is the Kronecker delta. Finally, we define
\begin{equation}\label{eq.an}
\A_n(s) := \sum_{i,j=1}^d q_{ij}^{(n)}(s)D_{ij} + \sum_{j=1}^d F_j(s) D_j\, .
\end{equation}

\begin{lem}\label{l.approx}
For every $n \in \CN$ the operator $\A_n$ satisfies Hypothesis \ref{hyp1} with the interval $[0,1]$ replaced with $[0,t]$.
Moreover, the coefficients $q_{ij}^{(n)}$, along with their first order spatial derivatives are bounded on $Q(0,t)$. Finally,
when $\tilde{W}$ is a time dependent Lyapunov function on $[0,t]$ for the operator $\partial_s -\A$, then it is a time dependent Lyapunov function for the operator $\partial_s-\A_n$ with respect to the same $h$.
\end{lem}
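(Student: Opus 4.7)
The plan exploits throughout that $q_{ij}^{(n)} = \varphi_n q_{ij} + (1-\varphi_n)\eta\delta_{ij}$ is, pointwise, a convex combination of $q_{ij}$ and $\eta \delta_{ij}$ with $0 \leq \varphi_n \leq 1$; every part of the lemma reduces to this identity. To verify Hypothesis \ref{hyp1} for $\A_n$, I would first note that, since $W \in C(\bar Q(0,t)) \cap C^{1,2}(Q(0,t))$ and $\varphi \in C_c^\infty(\CR)$, the composition $\varphi_n = \varphi(W/n)$ inherits the same regularity and so lies in $C^{\varsigma/2,\varsigma}_{\loc}([0,t]\times\CR^d) \cap C^{0,1}((0,t)\times\CR^d)$; multiplying by $q_{ij}$ and adding $(1-\varphi_n)\eta\delta_{ij}$ gives the required regularity for $q_{ij}^{(n)}$. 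Uniform ellipticity is immediate from the convex combination: $\sum_{i,j} q_{ij}^{(n)}\xi_i\xi_j = \varphi_n \sum_{i,j} q_{ij}\xi_i\xi_j + (1-\varphi_n)\eta|\xi|^2 \geq \eta|\xi|^2$. The same $V$ of Hypothesis \ref{hyp1}(3) handles the Lyapunov condition: $\A_n(s) V = \varphi_n \A(s) V + (1-\varphi_n)(\eta\Delta V + F\cdot \nabla V) \leq M$, since both brackets are bounded by $M$, and the second inequality in Hypothesis \ref{hyp1}(3) does not involve $q_{ij}$ at all and so is unchanged.

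For the boundedness of $q_{ij}^{(n)}$ and its spatial gradient, the key point is that $\varphi_n$ vanishes outside $\{W \leq 2n\}$, and by the uniform divergence of $W$ at infinity this sublevel set is contained in $[0,t]\times \bar{B}_{R_n}$ for some $R_n > 0$, that is, in a compact subset of $\bar Q(0,t)$. On this compact set, $q_{ij}$, its spatial gradient $\nabla q_{ij}$ (existing and locally bounded by the $C^{0,1}$ assumption), and $\nabla W$ are all bounded by continuity, while $\varphi$ and $\varphi'$ are globally bounded by construction. The product rule $D_k q_{ij}^{(n)} = \varphi_n D_k q_{ij} + \frac{1}{n}\varphi'(W/n)\,(q_{ij}-\eta\delta_{ij})\, D_k W$ then delivers the global boundedness on $Q(0,t)$ of $q_{ij}^{(n)}$ and of its first order spatial derivatives.

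For the statement on the time-dependent Lyapunov function $\tilde W$, the second inequality in Definition \ref{defn-2.6}(3) involves only $F$ and $\Delta$ and is inherited verbatim. The first one follows from the identity $\partial_s \tilde W - \A_n(s)\tilde W = \varphi_n\bigl[\partial_s \tilde W - \A(s) \tilde W\bigr] + (1-\varphi_n)\bigl[\partial_s \tilde W - \eta\Delta \tilde W - F\cdot \nabla \tilde W\bigr] \geq -h(s)\tilde W$, where both bracketed quantities are $\geq -h(s)\tilde W$ by hypothesis and $\varphi_n, 1-\varphi_n \in [0,1]$; the bound $\tilde W \leq V$ and the uniform divergence of $\tilde W$ at infinity depend only on $\tilde W$ and $V$ and so persist. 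None of the individual steps is substantially difficult; the only mildly delicate ingredient is the compactness of the support of $\varphi_n$ inside $\bar Q(0,t)$, which is the one place where the uniform growth of $W$ at spatial infinity is used essentially.
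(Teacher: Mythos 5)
Your proof is correct and follows essentially the same route as the paper: the pointwise convex-combination identity $q_{ij}^{(n)}=\varphi_n q_{ij}+(1-\varphi_n)\eta\delta_{ij}$ gives regularity, ellipticity with the same $\eta$, the bound $\A_n(s)V\le M$, and the inequality for $\tilde W$ (which the paper dispatches with ``proved similarly''), while boundedness of $q_{ij}^{(n)}$ and its spatial gradient comes, exactly as in the paper, from $\varphi_n$ vanishing outside a sublevel set of $W$ that is spatially bounded. The only difference is that you spell out the product rule and the splitting $\partial_s\tilde W-\A_n\tilde W=\varphi_n[\partial_s\tilde W-\A\tilde W]+(1-\varphi_n)[\partial_s\tilde W-\eta\Delta\tilde W-F\cdot\nabla\tilde W]$ explicitly, which the paper leaves implicit.
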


\begin{proof}
Clearly, the coefficients $q_{ij}^{(n)}$ are locally H\"older continuous functions and $Q_n=(q_{ij}^{(n)})$ is uniformly elliptic with the same constant $\eta$. Since the functions $\varphi_n$ vanish outside a compact set, the coefficients $q_{ij}^{(n)}$ and their spatial derivatives are bounded, continuous functions on $[0,t]\times \CR^d$.
Moreover,
\begin{eqnarray*}
\A_n(s) V  =  \varphi_n \A(s) V + (1-\varphi_n)(\eta \Delta V + F(s)\cdot \nabla V) \leq \varphi_n M + (1-\varphi_n)M = M\,,
\end{eqnarray*}
for all $(s,x) \in [0,t]\times\CR^d$. The assertion concerning $\tilde{W}$ is proved similarly.
\end{proof}

As a consequence of Lemma \ref{l.approx}, for every $n \in \CN$ we have an evolution family $(G_n(r,s))_{0\leq s\leq r \leq t}$
associated with equation \eqref{eq.nee} where $\A$ is replaced with $\A_n$. Thus, given a measure $\mu_t^n$, we can define an evolution system of measures $(\mu_s^n)_{s \in [0,t]}$ by setting $\mu_s^n := G_n(t,s)^*\mu_t^n$. Taking Lemma \ref{l.simproperties} into account, we can also define the measure $\nu_n$ on $(0,t)\times \CR^d$ by setting
\begin{eqnarray*}
\nu_n(A\times B) = \int_A\mu_s^n(B)\, ds\,,
\end{eqnarray*}
for any $A\in\cB((0,t))$, any $B\in \cB(\CR^d)$, and then extending $\mu_n$ in the standard way to $\cB((0,t)\times\CR^d)$.
We denote the density of $\nu_n$ with respect to the Lebesgue measure by $\rho_n$.
As before, $\mu_s$, $\nu$ and $\rho$ refer to the corresponding objects associated with $\A$.

\begin{prop}\label{p.conv}
If $\mu_t^{n}$ converges weakly to $\mu_t$, then $\rho_n \to \rho$ locally uniformly in $Q(0,t)$.
\end{prop}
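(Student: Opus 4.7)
The plan is to deduce local uniform convergence of $\rho_n$ in two stages: first establish weak convergence of $\nu_n$ to $\nu$ on $Q(0,t)$, then upgrade it to local uniform convergence of the densities via interior regularity estimates that are uniform in $n$.

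For the first stage, I would fix $s \in [0,t]$ and $f \in C_b(\CR^d)$ and show that $G_n(t,s)f \to G(t,s)f$ locally uniformly in $x$ as $n \to \infty$. This is a standard stability result for nonautonomous parabolic Cauchy problems obtained by a compactness argument: the functions $u_n(r,x) := (G_n(r,s)f)(x)$ solve $\partial_r u_n = \A_n u_n$ with $u_n(s,\cdot) = f$, are uniformly bounded by $\|f\|_\infty$, and, thanks to Lemma \ref{l.approx} combined with the fact that on any compact subset of $Q(0,t)$ the cut-off $\varphi_n$ eventually equals $1$ (since $W$ is bounded on compact sets), the coefficients of $\A_n$ agree with those of $\A$ on any fixed compact set for $n$ large. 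Interior Schauder estimates then yield local compactness of $\{u_n\}$, and by uniqueness of the solution to \eqref{eq.nee} every cluster point must equal $G(t,s)f$. Combined with the tightness of $\{\mu_t^n\}$ coming from the weak convergence $\mu_t^n \to \mu_t$, this gives
\begin{equation*}
\int_{\CR^d} f\, d\mu_s^n = \int_{\CR^d} G_n(t,s)f\, d\mu_t^n \longrightarrow \int_{\CR^d} G(t,s)f\, d\mu_t = \int_{\CR^d} f\, d\mu_s\, ,
\end{equation*}
so $\mu_s^n \to \mu_s$ weakly for every $s \in [0,t]$. Integrating in $s$ and applying dominated convergence yields $\nu_n \to \nu$ weakly on $Q(0,t)$.

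For the second stage, I would fix an arbitrary compact set $K \subset Q(0,t)$ and an open cylinder $Q'$ with $K \subset Q' \subset\subset Q(0,t)$. Since $W$ is bounded on $\overline{Q'}$, for $n$ large enough one has $q_{ij}^{(n)} = q_{ij}$ on $\overline{Q'}$, so $\nu_n$ satisfies $(\partial_s - \A)^*\nu_n = 0$ in the sense of distributions on $Q'$ for such $n$. The local regularity results of \cite{bkr06} then supply a constant $C = C(K)$ and a $\gamma \in (0,1)$, both independent of $n$, such that $\|\rho_n\|_{C^\gamma(K)} \leq C$. By the Arzelà–Ascoli theorem, any subsequence of $\{\rho_n\}$ admits a further subsequence converging locally uniformly in $Q(0,t)$ to some continuous function $\tilde\rho$.

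Finally I would identify $\tilde\rho$ with $\rho$. For any $\phi \in C_c(Q(0,t))$, the weak convergence $\nu_{n_k} \to \nu$ gives
\begin{equation*}
\int_{Q(0,t)} \phi\, \rho_{n_k}\, dx\, ds = \int_{Q(0,t)} \phi\, d\nu_{n_k} \longrightarrow \int_{Q(0,t)} \phi\, d\nu = \int_{Q(0,t)} \phi\, \rho\, dx\, ds\, ,
\end{equation*}
while local uniform convergence gives $\int \phi\, \rho_{n_k}\, dx\, ds \to \int \phi\, \tilde\rho\, dx\, ds$. Hence $\tilde\rho = \rho$ almost everywhere and, by continuity, everywhere. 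Uniqueness of the limit forces the whole sequence $\rho_n$ to converge to $\rho$ locally uniformly on $Q(0,t)$. The main obstacle is the uniform-in-$n$ interior Hölder regularity of $\rho_n$; the crucial simplification is that $\A_n$ coincides with $\A$ on any fixed compact subset of $Q(0,t)$ once $n$ is sufficiently large, which allows the estimates of \cite{bkr06} to be applied with constants independent of $n$.
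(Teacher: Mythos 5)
Your argument follows essentially the same route as the paper's proof: locally uniform convergence of $G_n(\cdot,s)f$ to $G(\cdot,s)f$ via Schauder compactness plus uniqueness, combined with tightness of $(\mu_t^n)$, gives weak convergence of $\mu_s^n$ and hence of $\nu_n$; then $n$-independent local H\"older bounds from \cite{bkr06}, Arzel\`a--Ascoli, and a subsequence--subsequence argument identify the locally uniform limit of $\rho_n$ with $\rho$. The one point to adjust is your choice $f\in C_b(\CR^d)$ in the first stage: interior Schauder estimates give no control of $u_n=G_n(\cdot,s)f$ up to the initial time $r=s$ for merely bounded continuous data, so the cluster point obtained from interior compactness is not known to attain the initial value $f$ and cannot be identified with $G(\cdot,s)f$ by uniqueness as stated; the paper instead works with the convergence-determining class $C_c^{2+\varsigma}(\CR^d)$, for which the estimates hold up to $r=s$ and which still suffices for the weak convergence of the measures. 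Your observation that the coefficients of $\A_n$ coincide with those of $\A$ on any fixed compact subset for $n$ large (since $W$ is bounded there) is correct and provides a clean, explicit justification--left implicit in the paper--for the $n$-independence of the constants in the \cite{bkr06} estimates.
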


\begin{proof}
For $f \in C_c^{2+\varsigma}(\CR^d)$ the unique solution in $C^{1,2}((s,1)\times\CR^d)\cap C_b([s,1]\times\CR^d)$ of the equation $\partial_tu(t) = \A_n(t) u(t)$ with $u(s) =f$ is given by
$u_n = G_n(\cdot,s)f$. Combining interior Schauder estimates with a diagonal argument, we see that, up to a subsequence, the functions
$u_n$, together with their first order time derivative and their first and second order spatial derivatives, converge locally uniformly to
a function $u$ (resp.\ its derivatives).
Clearly, $u$ must solve the equation $\partial_tu(t) = \A(t)u(t)$ with $u(s) = f$, whence $u = G(\cdot,s)f$.\smallskip

We claim that this implies that $\mu_s^n$ converges weakly to $\mu_s$ for all $s \in [0,t)$. Indeed, for $f \in C_c^{2+\varsigma}(\CR^d)$ we have
\begin{align*}
&\bigg| \int_{\CR^d} f \, d\mu_s^n - \int_{\CR^d} f\, d\mu_s\bigg|\\
= & \bigg| \int_{\CR^d} G_n(t,s)f\, d\mu_t^n - \int_{\CR^d} G(t,s)f\, d\mu_t \bigg|\\
\leq & \int_{\CR^d} |G_n(t,s)f - G(t,s)f|\, d\mu_t^n + \bigg| \int_{\CR^d} G(t,s)f\, d\mu_t^n - \int_{\CR^d} G(t,s)f\, d\mu_t \bigg| \, ,
\end{align*}
and the latter converges to zero. For the last term, this follows from the weak convergence of $\mu_t^n$ to $\mu_t$.
For the first summand, one uses the tightness of the measures $\mu_t^n$ (which follows from the
 weak convergence of $\mu_t^n$ to $\mu_t$ and Prokhorov's theorem) and the uniform convergence on compact sets of $G_n(t,s)f$ to $G(t,s)f$, proved above.

This proves that $\int_{\CR^d} f \, d\mu_s^{n} \to \int_{\CR^d} f \, d\mu_s$ for all $f \in C_c^{2+\varsigma}(\CR^d)$. The latter set is convergence determining, whence $\mu_s^n$ converges weakly to $\mu_s$.
Since this is true for every $s$, it follows that $\nu_n$ converges weakly to $\nu$. Indeed, if
$f \in C_b(Q(0,t))$, then
\begin{eqnarray*}
\int_{\CR^d} f \, d\nu_n = \int_0^t\int_{\CR^d} f(s,x)\, d\mu_s^n(x)\, ds \to \int_0^t\int_{\CR^d} f(s,x)\, d\mu_s(x)\, ds\,
\end{eqnarray*}
by dominated convergence.\smallskip

It follows from \cite[Corollary 3.11]{bkr06} and Sobolev embedding that, for any compact set $K \subset \CR^d$ and any compact interval
$J \subset (0,t)$, there exist a constant $C>0$ and $\gamma\in (0,1)$, independent of $n$, such that $\|\rho_n\|_{C^\gamma (J\times K)} \leq C$. Thus, by compactness and a diagonal argument, a subsequence of $\rho_n$ converges locally uniformly to a continuous function $\psi$. Since $\nu_n$ converges
weakly to $\nu$, it is easy to see that we must have $\psi = \rho$. A subsequence-subsequence argument shows that $\rho_n$ converges
locally uniformly to $\rho$.
\end{proof}

\section{Results for bounded diffusion coefficients}

We now proceed to prove some results in the case where the diffusion coefficients $q_{ij}$ are additionally bounded. These results will later on be applied
to the parabolic equation associated with the approximate coefficients $q_{ij}^{(n)}$, constructed in Section \ref{subsect.approx}. However, for ease
of notation, we suppress the index $n$ here.

\subsection{Global regularity results} We first address the question for which values of $r$ the density $\rho$ belongs to
the space $L^r(Q(a,b))$ and/or the space $\mathscr{H}^{r,1}(Q(a,b))$. We follow the strategy in \cite{mpr10}, where the autonomous situation was considered. The proofs are straightforward generalizations of the ones given there. However, in an effort of being self contained, we provide full proofs.
\smallskip

Throughout this subsection, we assume that the coefficients $q_{ij}$ and their spatial derivatives $D_kq_{ij}$ ($i,j,k =1, \ldots, d$) are bounded on
$(0,1)\times \CR^d$.

For $0\leq a < b \leq 1$ and $k \geq 1$, we define $\Gamma (k,a,b)$ by
\begin{eqnarray*}
\Gamma (k,a,b) := \bigg ( \int_a^b \int_{\CR^d} |F(\sigma ,x)|^k \rho(\sigma ,x)\, dx\,d\sigma \bigg)^\frac{1}{k}\, .
\end{eqnarray*}

\begin{prop}\label{p.qinlr}
Let $0<a<b<1$.
If $\Gamma (1,a,b) < \infty$, then $\rho \in L^r(Q(a,b))$ for all $r \in [1, (d+2)/(d+1))$ and
\begin{eqnarray*}
\|\rho\|_{L^r(Q(a,b))} \leq C(1+\Gamma (1,a,b))\,,
\end{eqnarray*}
for some constant $C$, depending only on the coefficients $q_{ij}$ $(i,j=1,\ldots,d)$ and the dimension $d$.
\end{prop}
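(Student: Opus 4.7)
The plan is a duality argument centered on an auxiliary drift-free forward parabolic problem. The case $r=1$ is immediate because each $\mu_s$ is a probability measure, giving $\|\rho\|_{L^1(Q(a,b))}=b-a$. For $r\in(1,(d+2)/(d+1))$, by density and duality it suffices to show
\[
\left|\int_{Q(a,b)}\rho\, f\,dx\,ds\right|\le C\bigl(1+\Gamma(1,a,b)\bigr)\|f\|_{L^{r'}(Q(a,b))}
\]
for every $f\in C^{\infty}_c(Q(a,b))$, where $r'=r/(r-1)>d+2$.

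Given such an $f$, I would solve the forward Cauchy problem associated with the \emph{principal part} of $\A$ alone,
\[
\partial_s\varphi=\sum_{i,j=1}^d q_{ij}(s,\cdot)D_{ij}\varphi+f\quad\text{on }(a,b)\times\CR^d,\qquad\varphi(a,\cdot)=0.
\]
This problem is well posed in the present subsection because the $q_{ij}$ and the $D_kq_{ij}$ are bounded and $Q$ is uniformly elliptic; its fundamental solution $K(s,x;\sigma,y)$ satisfies Aronson-type Gaussian upper bounds for both $K$ and $\nabla_y K$. Rewriting $\partial_s\varphi-\A(s)\varphi=f-F(s,\cdot)\cdot\nabla\varphi$ and applying Lemma~\ref{l.differential} to $\varphi$, after approximation by functions in $C^{1,2}_c(\bar{Q}(0,1))$ through a spatial cutoff whose correction terms vanish in the limit thanks to the Gaussian decay of $\varphi$ and to the integrability of the Lyapunov function $V$ against $\mu_b$ provided by Proposition~\ref{p.lyapunov}, one arrives at
\[
\int_{Q(a,b)}\rho\,f\,dx\,ds=\int_{\CR^d}\varphi(b,\cdot)\,d\mu_b+\int_{Q(a,b)}\rho\,F\cdot\nabla\varphi\,dx\,ds.
\]
The first right-hand term is at most $\|\varphi(b,\cdot)\|_{L^\infty(\CR^d)}$ since $\mu_b$ is a probability, and the second is at most $\|\nabla\varphi\|_{L^\infty(Q(a,b))}\cdot\Gamma(1,a,b)$.

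Aronson's estimates yield $\|K(s,x;\sigma,\cdot)\|_{L^r(\CR^d)}\le C(s-\sigma)^{-d(r-1)/(2r)}$ and $\|\nabla_y K(s,x;\sigma,\cdot)\|_{L^r(\CR^d)}\le C(s-\sigma)^{-d(r-1)/(2r)-1/2}$; Young's inequality in $y$ followed by H\"older's inequality in $\sigma$ with conjugate exponents $(r,r')$ then reduces the bound on $\|\nabla\varphi\|_{L^\infty(Q(a,b))}$ to the integrability of $(b-\sigma)^{-d(r-1)/2-r/2}$ on $(a,b)$, which holds precisely when $r<(d+2)/(d+1)$. This is the origin of the sharp exponent. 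The bound on $\|\varphi(b,\cdot)\|_{L^\infty(\CR^d)}$ is obtained in the same way and is valid in the same range, with a constant depending only on $d$ and on the $q_{ij}$ through the Aronson constants. The principal anticipated obstacle is making the application of Lemma~\ref{l.differential} rigorous for a $\varphi$ that is not compactly supported in $x$: the Gaussian decay of $\varphi$ inherited from $K$, combined with the Lyapunov control of Proposition~\ref{p.lyapunov}, should yield enough integrability to justify the cutoff limit, but the bookkeeping of the error terms in the approximation procedure requires care.
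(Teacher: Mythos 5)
Your overall skeleton is the same as the paper's: a duality argument in which one tests $\rho$ against $\partial_s\varphi-\A_0\varphi$ via Lemma \ref{l.differential} (after a spatial cutoff), keeps only the drift-free principal part $\A_0=\sum q_{ij}D_{ij}$ in the auxiliary problem, and reduces everything to an estimate of the form $\|\varphi\|_{W^{0,1}_\infty}\le C\|f\|_{L^{r'}}$ with $r'>d+2$, the probability of $\mu_s$ and $\Gamma(1,a,b)$ absorbing the remaining terms. Where you differ is the source of that key estimate: the paper solves $\partial_t u-\A_0 u=\psi$, $u(0)=0$, and combines the $W^{1,2}_{r'}$ a priori estimate of \cite[Theorem IV.9.1]{lsu} with the embedding $W^{1,2}_{r'}(Q(0,1))\hookrightarrow W^{0,1}_\infty(Q(0,1))$ for $r'>d+2$, whereas you represent $\varphi$ through the fundamental solution and use Gaussian bounds on $K$ and on its spatial gradient plus Young/H\"older; both routes encode the same parabolic scaling and indeed reproduce the sharp threshold $r<(d+2)/(d+1)$.

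Two points in your route need repair. First, the bound you need is on $\nabla_x K$ (the gradient in the variable of $\varphi$), measured in $L^r$ with respect to the other variable, and it is \emph{not} an ``Aronson-type'' estimate: Aronson's theorem concerns divergence-form operators with merely measurable coefficients and gives no gradient control at all, and for such coefficients Gaussian gradient bounds are false in general. To get them here you must invoke the extra regularity available in this subsection (uniform ellipticity, bounded $q_{ij}$ and bounded $D_kq_{ij}$, hence uniform spatial Lipschitz continuity; the H\"older continuity in time is only local), and check that the constants are uniform on all of $\CR^d$; this is a nontrivial citation, and avoiding it is precisely what the Sobolev-space route of the paper buys. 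Second, your appeal to Proposition \ref{p.lyapunov} for integrability of $V$ against $\mu_b$ is both unavailable (no hypothesis $V\in L^1(\mu_t)$ is made in this proposition) and unnecessary: since $\varphi$ and $\nabla\varphi$ are bounded, $\rho\,ds\,dx$ has total mass $b-a$ on $Q(a,b)$ and $\Gamma(1,a,b)<\infty$, the error terms produced by the cutoff $\vartheta(\cdot/n)$ (including the one involving the unbounded drift, which carries a factor $|F|\rho$ integrated over $\{|x|\ge n\}$) vanish as $n\to\infty$ by dominated convergence alone; this is exactly how the paper disposes of the cutoff.
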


\begin{proof}
Let us write $\A_0:= \sum_{i,j=1}^d q_{ij}D_{ij}$. By Lemma \ref{l.differential} we have
\begin{eqnarray*}
\int_{Q(a,b)} (\partial_s\varphi - \A_0\varphi)\rho\, ds\,dx = \int_{\CR^d} \varphi(b)\, d\mu_b - \int_{\CR^d} \varphi (a)\, d\mu_a
+ \int_{Q(a,b)}(F\cdot \nabla \varphi)\rho\, ds\,dx\,,
\end{eqnarray*}
for all $\varphi \in C_c^{1,2}(\bar{Q}(0,1))$. Since $\mu_t$ is a probability measure
for all $t \in (0,1)$, we infer that
\begin{align}
\bigg| \int_{Q(a,b)}  (\partial_s\varphi - \A_0\varphi)\rho\, ds\, dx \bigg| & \leq  \Gamma (1,a,b) \|\varphi\|_{W^{0,1}_\infty (Q(a,b))}
+ 2 \|\varphi\|_\infty\notag\\
&\leq  (2 + \Gamma (1,a,b))\|\varphi\|_{W^{0,1}_\infty (Q(a,b))}\,.
\label{eq.gammaest}
\end{align}
Now, let $\psi \in C_c^\infty(Q(a,b))$. By Schauder theory, see \cite[Theorem 9.2.3]{krylov}, the parabolic problem
\begin{eqnarray*}
\left\{ \begin{array}{rlll}
\partial_t u(t,x) - (\A_0(t)u(t))(x) & = & \psi(t,x)\,,& (t,x) \in Q(0,1)\,,\\[1mm]
u(0,x) & = & 0, & x \in \CR^d\,,
\end{array}
\right.
\end{eqnarray*}
has a unique solution $u \in C^{1+\frac{\varsigma}{2}, 2+\varsigma}(\bar{Q}(0,1))$. Pick $r' > d+2$. It follows from
\cite[Theorem IV.9.1]{lsu}, that the solution $u$ belongs to $W_{r'}^{1,2}(Q(0,1))$ and we have
\begin{equation}\label{eq.est1}
\|u\|_{W_{r'}^{1,2}(Q(0,1))} \leq C \|\psi\|_{L^{r'}(Q(a,b))}\, ,
\end{equation}
for some positive constant $C$ independent of $\psi$.
As $r' > d+2$, the space $W_{r'}^{1,2}(Q(0,1))$ is continuously embedded into $W_\infty^{0,1}(Q(0,1))$, see
\cite[Lemma II.3.3]{lsu}. Combining this with \eqref{eq.est1}, we obtain
\begin{equation}\label{eq.est2}
\|u\|_{W_\infty^{0,1}(Q(0,1))} \leq C' \|\psi\|_{L^{r'}(Q(a,b))}\, .
\end{equation}
Now let $\vartheta \in C_c^\infty (\CR^d)$ be such that $\one_{B_1}\leq \vartheta \leq \one_{B_2}$ and define, for $n\in\CN$, the function
$\varphi_n$ by
$\varphi_n (t,x) := \vartheta (x/n)u(t,x)$. Then, $\varphi_n$ satisfies the assumption of Lemma \ref{l.differential}, hence
\eqref{eq.gammaest} is valid for $\varphi$ replaced with $\varphi_n$. Combining with \eqref{eq.est2}, we obtain
\begin{align*}
\bigg| \int_{Q(a,b)} (\partial_s\varphi_n - \A_0\varphi_n)\rho\, ds\, dx \bigg|
&\leq (2+\Gamma (1,a,b))\|\varphi_n\|_{W^{0,1}_\infty (Q(a,b))}\\
&\leq  C''(1+\Gamma (1,a,b))\|u\|_{W^{0,1}_\infty (Q(a,b))}\\
&\leq C''C'(1+\Gamma(1,a,b) )\|\psi\|_{L^{r'}(Q(a,b))}\,,
\end{align*}
for any $n\in\CN$.
Letting $n\to \infty$, we obtain
\begin{eqnarray*}
\bigg| \int_{Q(a,b)} \psi \rho\, ds\, dx \bigg| \lesssim (1+\Gamma(1,a,b) )\|\psi\|_{L^{r'}(Q(a,b))}\, .
\end{eqnarray*}
The arbitrariness of $\psi \in C_c^{\infty}(Q(a,b))$ implies that
$\rho \in L^r(Q(a,b))$ where $1/r+1/r' = 1$. Since $r'> d+2$ was arbitrary, this is true for any $r \in [1, (d+2)/(d+1))$.
\end{proof}

\begin{lem}\label{l.hest}
Let $0 < a_0<a<b<b_0<1$. If $\Gamma (k,a_0,b_0)< \infty$ for some $k>1$ and $\rho \in L^r (Q(a_0,b_0))$ for some
$1 <r< \infty$, then $ \rho \in \mathscr{H}^{p,1}(Q(a,b))$
for $p=rk/(r+k-1)$.
\end{lem}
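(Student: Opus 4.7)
The plan is to extract $\mathscr{H}^{p,1}$-regularity from the divergence-form PDE satisfied by $\rho$, after first promoting $F\rho$ to $L^p$ by H\"older.

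I would start with the H\"older estimate: the identity $p(k-1)/(k-p)=r$ is just a rearrangement of $p=rk/(r+k-1)$, so with conjugate exponents $k/p$ and $k/(k-p)$ (admissible since $k>p>1$) one obtains
\[
\|F\rho\|_{L^p(Q(a_0,b_0))}^p
= \int (|F|^k\rho)^{p/k}\,\rho^{p(k-1)/k}
\le \Gamma(k,a_0,b_0)^p\,\|\rho\|_{L^r(Q(a_0,b_0))}^{r(k-p)/k}.
\]
Since each $\mu_s$ is a probability measure, $\rho\in L^1(Q(a_0,b_0))$, and interpolation with $\rho\in L^r$ also gives $\rho\in L^p(Q(a_0,b_0))$.

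Next, in $\mathscr{D}'(Q(a_0,b_0))$ the density $\rho$ satisfies $\partial_s\rho+\A^*\rho=0$; using the distributional Leibniz rule (legitimate because $q_{ij}$ is Lipschitz in $x$), this can be recast as
\[
\partial_s\rho + \sum_iD_i\Big(\sum_j q_{ij}D_j\rho - H_i\rho\Big) = 0,\qquad H_i:=F_i-\sum_j D_jq_{ij}.
\]
By the boundedness of $D_kq_{ij}$ and the previous step, $H\rho\in L^p(Q(a_0,b_0))$. Pick a cutoff $\chi\in C_c^\infty((a_0,b_0))$ with $\chi\equiv 1$ on $[a,b]$; then $u:=\chi\rho$ (extended by zero) satisfies on $\CR\times\CR^d$
\[
\partial_s u + \sum_iD_i\Big(\sum_j q_{ij}D_j u - \chi H_i\rho\Big) = \chi'\rho,
\]
a divergence-form parabolic equation with bounded, measurable, uniformly elliptic coefficients whose right-hand side is the sum of an $L^p$-function and the divergence of an $L^p$ vector field. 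Standard $W^{0,1}_p$-estimates for such equations then deliver $u\in W^{0,1}_p(\CR\times\CR^d)$, whence $\nabla\rho\in L^p(Q(a,b))$.

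With $\nabla\rho\in L^p(Q(a,b))$ in hand, the same divergence-form equation yields, for every $v\in W^{0,1}_{p'}(Q(a,b))$,
\[
\langle\partial_s\rho,v\rangle = \int_{Q(a,b)}\Big(\sum_{ij}q_{ij}D_j\rho - H_i\rho\Big)D_i v\,ds\,dx,
\]
whence $|\langle\partial_s\rho,v\rangle|\lesssim\|\nabla v\|_{L^{p'}(Q(a,b))}$, i.e.\ $\partial_s\rho\in(W^{0,1}_{p'}(Q(a,b)))'$. Combined with $\rho\in W^{0,1}_p(Q(a,b))$, this is exactly $\rho\in\mathscr{H}^{p,1}(Q(a,b))$. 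The principal obstacle is the gradient bound: a priori $\rho$ is only known to lie in $L^p$, so appealing to parabolic $W^{0,1}_p$-theory requires a Meyers-type statement upgrading a distributional $L^p$-solution of a divergence-form equation with $L^p$-data to a $W^{0,1}_p$-solution; this is where the global-in-$x$ boundedness of $q_{ij}$ and $D_kq_{ij}$, assumed in this subsection, is essential, as it permits the whole-space $L^p$-theory to be applied to $u=\chi\rho$.
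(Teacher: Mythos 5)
Your bookkeeping steps are sound and coincide with the paper's: the H\"older computation giving $\|\,|F|\rho\,\|_{L^p(Q(a_0,b_0))}\le \Gamma(k,a_0,b_0)\,\|\rho\|_{L^r(Q(a_0,b_0))}^{(k-1)/k}$ (with $p(k-1)/(k-p)=r$) together with $\rho\in L^p$ by interpolation, and the concluding duality estimate for $\partial_s\rho$ once $\nabla\rho\in L^p$ is available. The genuine gap is the central step, which you yourself flag as ``the principal obstacle'' and then do not supply: the claim that ``standard $W^{0,1}_p$-estimates'' for a divergence-form parabolic equation with \emph{bounded, measurable} uniformly elliptic coefficients upgrade $u=\chi\rho$, known a priori only to lie in $L^p$, to $W^{0,1}_p$. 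In that generality the statement is false: for merely measurable coefficients, gradient $L^p$-estimates (Meyers) hold only for $p$ in a small neighbourhood of $2$ depending on the ellipticity ratio, whereas here $p$ must be allowed to range over all of $(1,\infty)$ (indeed the iteration in Proposition \ref{p.hest} and Corollary \ref{c.boundedness} needs $p>d+2$).

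Even using the regularity actually available in this subsection ($q_{ij}$ bounded with bounded spatial derivatives), what the standard $L^p$-theory gives is existence and uniqueness of a solution in $\mathscr{H}^{p,1}$ for the given data; identifying $\chi\rho$ with that solution is a uniqueness statement for distributional (very weak) $L^p$-solutions --- note that the only formulation you have for $\rho\in L^p$ puts \emph{two} spatial derivatives on the test function --- and its proof is itself a duality argument requiring $W^{1,2}_{p'}$-solvability of an adjoint nondivergence-form problem. That duality is exactly what the paper carries out and what your proposal omits: starting from the identity \eqref{eq.etaq}, it applies spatial difference quotients $\Delta_h$ (the boundedness of $D_kq_{ij}$ controlling the commutator term $I_2$), extends the resulting bound to all $\varphi\in W^{1,2}_{p'}(Q(0,1))$, solves $\partial_s u-\A_0 u=|\Delta_h\tilde\rho|^{p-2}\Delta_h\tilde\rho$ in $W^{1,2}_{p'}$ by \cite[Theorem IV.9.1]{lsu}, and inserts this $u$ to obtain $\|\Delta_h\tilde\rho\|_{L^p}\le c\|\rho\|_{L^r}^{(k-1)/k}$ uniformly in $h$, whence $\nabla\tilde\rho\in L^p$ by weak compactness. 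So the ``Meyers-type statement'' you defer to is not an off-the-shelf lemma but the heart of the proof; until it is proved (or a precise citation with matching hypotheses is given), the argument is incomplete. Two smaller points: your divergence-form equation is backward parabolic in $s$, so a time reversal is needed before invoking forward-in-time theory, and the Leibniz rewriting for $\rho\in L^p$ must be interpreted by moving all derivatives onto the test function, which is again just the weak identity the paper works with.
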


\begin{proof}
Throughout the proof, $c$ denotes a constant depending on $k, a_0, a, b, b_0, d$ and the coefficients $q_{ij}$, which may change from line to line.

Let $\eta$ be a smooth function such that $0\leq \eta \leq 1$, $\eta (s) =1$ for $s\in [a,b]$ and $\eta (s) = 0$ for
$s\leq a_0$ and for $s\geq b_0$.  Let $\varphi \in C^{1,2}_c(\bar{Q}(0,1))$.
Applying Lemma \ref{l.differential} to $\eta\varphi$, we obtain
\begin{equation}\label{eq.etaq}
\int_{Q(0,1)}\eta \rho (\partial_s\varphi - \A_0\varphi)\, ds \, dx = \int_{Q(0,1)}[ (F\cdot\nabla \varphi)\eta \rho -
\rho\varphi\partial_s\eta]\, ds\, dx\,.
\end{equation}

We infer from H\"older's inequality that
\begin{align*}
\int_{Q(a_0,b_0)} |F|^p\rho^p\, ds\, dx &= \int_{Q(a_0,b_0)} |F|^p\rho^\frac{p}{k} \rho^{p-\frac{p}{k}} \, ds\,dx\\
&\leq  \bigg( \int_{Q(a_0,b_0)} |F|^k\rho\, ds\,dx \bigg)^{\frac{p}{k}}
\bigg( \int_{Q(a_0,b_0)} \rho^\frac{p(k-1)}{k-p}\, ds\,dx \bigg)^{1-\frac{p}{k}}\\
&=  \Gamma (k,a_0,b_0)^p \bigg( \int_{Q(a_0,b_0)} \rho^r\,ds\,dx \bigg)^{1-\frac{p}{k}}\,.
\end{align*}
Consequently,
\begin{eqnarray*}
\||F|\rho\eta\|_{L^p(Q(a_0,b_0))} \leq \||F|\rho\|_{L^p(Q(a_0,b_0))} \leq c \|\rho\|_{L^r(Q(a_0,b_0))}^\frac{k-1}{k}\, .
\end{eqnarray*}
The same computation with $|F|$ replaced with $\one$ shows that $\rho \in L^p(Q(a_0,b_0))$ with
\begin{equation}
\|\rho\|_{ L^p(Q(a_0,b_0))} \leq \|\rho\|_{ L^r(Q(a_0,b_0))}^{\frac{k-1}{k}}\,.
\label{star}
\end{equation}
Combining these estimates
with \eqref{eq.etaq}, it follows that
\begin{equation}\label{eq.etaq2}
\bigg| \int_{Q(0,1)} \eta \rho (\partial_s\varphi - \A_0\varphi)\, ds \, dx\bigg| \leq c \|\rho\|_{ L^r(Q(a_0,b_0))}^{\frac{k-1}{k}}
\|\varphi\|_{W^{0,1}_{p'}(Q(0,1))}\,,
\end{equation}
where $1/p+1/p'=1$. In what follows, we write $\tilde{\rho}$ for $\eta \rho$.

Now fix $j_0 \in \{1, \ldots, d\}$. For a function $\psi = \psi (t,x)$ and $h \in \CR$ small enough, we denote
by $\Delta_h\psi$ the difference quotient
\begin{eqnarray*}
\Delta_{h}\psi (t,x) := h^{-1}\big[\psi (t,x+he_{j_0}) - \psi (t,x)\big]\,.
\end{eqnarray*}
An easy computation shows that
\begin{align*}
& \int_{Q(0,1)} \tilde{\rho}\big[\partial_s (\Delta_{-h}\varphi) - \A_0 (\Delta_{-h}\varphi )\big]\, ds\,dx\\
= &  - \int_{Q(0,1)} (\Delta_{h}\tilde{\rho})\big[\partial_s\varphi - \A_0 \varphi \big]\, ds\,dx\\
& + \int_{Q(0,1)}  \tilde{\rho}(s, x+he_{j_0}) \sum_{i,j=1}^d (\Delta_hq_{ij})(s,x)D_{ij}\varphi (s,x)\, ds\,dx\\
=: & - I_1 + I_2\, .
\end{align*}
Noting that $\Delta_hq_{ij}(s,x) = D_{j_0}q_{ij}(s, \xi_x)$ for some $\xi_x$ on the segment from $x$ to $x+he_{j_0}$
and that the first order derivatives of the diffusion coefficients are bounded on $Q(0,1)$, it follows that
\begin{eqnarray*}
|I_2| \leq c \|\rho\|_{L^r(Q(a_0,b_0))}^{\frac{k-1}{k}} \|\varphi\|_{W^{1,2}_{p'}(Q(0,1))}\, .
\end{eqnarray*}
Since $\|\Delta_h \varphi\|_{W^{0,1}_{p'}} \leq \|\varphi \|_{W^{1,2}_{p'}}$ as a consequence of the mean value theorem, the above combined with \eqref{eq.etaq2} yields
\begin{equation}\label{eq.etaq3}
\bigg| \int_{Q(0,1)} (\Delta_h\tilde{\rho}) \big[\partial_s\varphi - \A_0\varphi\big]\, ds \, dx\bigg| \leq c \|\rho\|_{ L^r(Q(a_0,b_0))}^{\frac{k-1}{k}}
\|\varphi\|_{W^{1,2}_{p'}(Q(0,1))}\,.
\end{equation}
Observing that $\tilde{\rho}$, and hence $\Delta_h\tilde{\rho}$, is an element of $L^p(Q(0,1))$, an approximation argument shows that \eqref{eq.etaq3} even holds for $\varphi \in W_{p'}^{1,2}(Q(0,1))$.

Now observe that $|\Delta_h\tilde{\rho}|^{p-2}\Delta_h\tilde{\rho}$ belongs to $L^{p'}(Q(0,1))$. As a consequence of \cite[Theorem IV.9.1]{lsu}
the Cauchy problem
\begin{eqnarray*}
\left\{\begin{array}{rlll}
\partial_su(s,x) - (\A_0(s)u(s))(x)& = & |\Delta_h\tilde{\rho}(s,x)|^{p-2}\Delta_h\tilde{\rho}(s,x)\,, & (s,x) \in Q(0,1)\,,\\[2mm]
u(0,x) & = & 0\,, & x \in \CR^d\,,
\end{array}
\right.
\end{eqnarray*}
has a unique solution $u \in W^{1,2}_{p'}(Q(0,1))$ and
\begin{eqnarray*}
\|u\|_{ W^{1,2}_{p'}(Q(0,1))}
\leq c \| |\Delta_h\tilde{\rho}|^{p-1}\|_{L^{p'}(Q(0,1))}\,.
\end{eqnarray*}
Inserting $u$ into \eqref{eq.etaq3} and using this estimate, we obtain
\begin{eqnarray*}
\int_{Q(0,1)}|\Delta_h\tilde{\rho}|^p\, ds\,dx \leq c \|\rho\|_{L^r(Q(a_0,b_0))}^{\frac{k-1}{k}} \|\Delta_h\tilde{\rho}\|_{L^p(Q(0,1))}^{p-1}\, .
\end{eqnarray*}
Thus
\begin{eqnarray*}
\|\Delta_h\tilde{\rho}\|_{L^p(Q(0,1))} \leq c \|\rho\|_{L^r(Q(a_0,b_0))}^{\frac{k-1}{k}} \, .
\end{eqnarray*}
Since the difference quotients $\Delta_h\tilde{\rho}$ are bounded in $L^p$, it follows from reflexivity that
they have a weak cluster point $g \in L^p(Q(0,1))$. Testing against a function in $C^\infty_c (Q(0,1))$, it follows that
$g$ is the weak derivative of $\tilde{\rho}$ in the direction in which the difference quotients were taken.
As this direction was arbitrary, it follows that
$\tilde{\rho} \in W^{0,1}_{p}(Q(0,1))$ and
\begin{equation}
\|\nabla \tilde{\rho}\|_{L^p(Q(0,1))}\le c\|\rho\|_{L^r(Q(a_0,b_0))}^{\frac{k-1}{k}}\,.
\label{estim-grad}
\end{equation}

Let us now consider the distributional time derivative of $\rho$. From \eqref{eq.etaq} we deduce that
\begin{eqnarray*}
\bigg |\int_{Q(0,1)}\tilde\rho \partial_s\varphi\, ds \, dx\bigg | = \bigg |\int_{Q(0,1)}[\tilde\rho\A_0\varphi+ (F\cdot\nabla \varphi)\tilde\rho -
\rho\varphi\partial_s\eta]\, ds\, dx\bigg |\,,
\end{eqnarray*}
for any $\varphi\in C^{1,2}_c(\bar{Q}(0,1))$.
Integrating by parts, we find
\begin{eqnarray*}
\int_{Q(0,1)}\tilde\rho\A_0\varphi\, ds\, dx
=-\int_{Q(0,1)}\langle Q\nabla_x\tilde\rho,\nabla_x\varphi\rangle \, ds\, dx
-\int_{Q(0,1)}\tilde\rho\sum_{i,j=1}^dD_iq_{ij}D_j\varphi\, ds\, dx\,.
\end{eqnarray*}
Using \eqref{star}, \eqref{estim-grad} and the boundedness of the spatial derivatives of the $q_{ij}$, we deduce that
\begin{align*}
\bigg |
\int_{Q(0,1)}\tilde\rho\A_0\varphi\, ds\, dx\bigg | & \le c\|\tilde\rho\|_{W^{0,1}_p(Q(0,1))}\|\nabla_x\varphi\|_{L^{p'}(Q(0,1))}\\
&\le  c\|\rho\|_{L^r(Q(a_0,b_0))}^{\frac{k-1}{k}}\|\nabla_x\varphi\|_{L^{p'}(Q(0,1))}\, .
\end{align*}
Combining this with the above estimates yields
\begin{eqnarray*}
\bigg |\int_{Q(0,1)}\tilde\rho \partial_s\varphi\, ds \, dx\bigg |
\le c\|\rho\|_{L^r(Q(a_0,b_0))}^{\frac{k-1}{k}}\|\varphi\|_{W^{0,1}_{p'}(Q(0,1))}\,,
\end{eqnarray*}
which implies that $\partial_s \tilde\rho\in (W^{0,1}_{p'}(Q(0,1)))'$. Altogether, we have showed that
 $u\in \mathscr{H}^{p,1}(Q(0,1))$. Since $\tilde\rho\equiv\rho$ in
$Q(a,b)$, the assertion follows.
\end{proof}

We can now iterate Lemma \ref{l.hest} to obtain better regularity of $\rho$.

\begin{prop}\label{p.hest}
Let $0 < a_0 < a<b<b_0 < 1$. If $\Gamma (k,a_0,b_0) < \infty$ for some $1<k\le d+2$, then $\rho \in L^r(Q(a,b))$ for
all $r \in [1, (d+2)/(d+2-k))$ and $\rho \in \mathscr{H}^{p,1}(Q(a,b))$ for all
$p \in (1, (d+2)/(d+3-k))$.
\end{prop}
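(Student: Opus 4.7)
The plan is to iterate Lemma \ref{l.hest} on a shrinking sequence of cylinders, combining each intermediate $\mathscr{H}^{p,1}$-regularity with a parabolic Sobolev embedding back into $L^r$. First, since each $\mu_\sigma$ is a probability measure, H\"older's inequality in $\sigma$ gives $\Gamma(1,a_0,b_0)\le (b_0-a_0)^{(k-1)/k}\Gamma(k,a_0,b_0)<\infty$. Hence Proposition \ref{p.qinlr}, applied on an intermediate cylinder $Q(a_0',b_0')$ with $a_0<a_0'<a<b<b_0'<b_0$, will yield $\rho\in L^{r_0}(Q(a_0',b_0'))$ for every $r_0\in(1,(d+2)/(d+1))$.

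I would then fix monotone sequences $a_0'<a_1<a_2<\ldots\to a$ and $b_0'>b_1>b_2>\ldots\to b$ and set up the induction: assuming $\rho\in L^{r_n}(Q(a_n,b_n))$ with $r_n>1$, Lemma \ref{l.hest} applied with inner cylinder $Q(a_{n+1},b_{n+1})$ delivers $\rho\in \mathscr{H}^{p_{n+1},1}(Q(a_{n+1},b_{n+1}))$ for $p_{n+1}=r_nk/(r_n+k-1)>1$. Combining this with the parabolic Sobolev embedding
\begin{equation*}
\mathscr{H}^{p,1}(Q(a,b))\hookrightarrow L^{\frac{p(d+2)}{d+2-p}}(Q(a,b))\qquad (1<p<d+2),
\end{equation*}
used in the autonomous setting of \cite{mpr10}, upgrades the regularity to $\rho\in L^{r_{n+1}}(Q(a_{n+1},b_{n+1}))$ with
\begin{equation*}
r_{n+1}=\Phi(r_n):=\frac{p_{n+1}(d+2)}{d+2-p_{n+1}}=\frac{r_nk(d+2)}{r_n(d+2-k)+(d+2)(k-1)}.
\end{equation*}

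The crux will be a fixed-point analysis of $\Phi$. A direct calculation shows that $\Phi$ is strictly increasing on $(1,\infty)$ and that $\Phi(r)-r$ has the same sign as $r^*-r$, where $r^*:=(d+2)/(d+2-k)$ (with the convention $r^*=\infty$ when $k=d+2$). Starting from any $r_0\in(1,(d+2)/(d+1))$, the sequence $(r_n)$ is therefore strictly increasing and bounded above by $r^*$, hence converges to the unique fixed point $r^*$ of $\Phi$. Consequently, for every $r\in[1,r^*)$ some $r_n$ will exceed $r$, yielding $\rho\in L^r(Q(a,b))$ and proving the first assertion.

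For the second assertion, I would pick $a_0<a_\ast<a<b<b_\ast<b_0$ and, applying the first assertion with $(a_\ast,b_\ast)$ in place of $(a,b)$, obtain $\rho\in L^r(Q(a_\ast,b_\ast))$ for every $r<r^*$. A final application of Lemma \ref{l.hest} will then produce $\rho\in\mathscr{H}^{p,1}(Q(a,b))$ with $p=rk/(r+k-1)$; since the identity $r^*k/(r^*+k-1)=(d+2)/(d+3-k)$ holds, letting $r\uparrow r^*$ sweeps out every $p\in(1,(d+2)/(d+3-k))$. The principal difficulty, beyond the routine bookkeeping of shrinking cylinders, is the fixed-point analysis pinning down the optimal exponents; once that is in hand, the result follows by iterating Lemma \ref{l.hest} finitely many times.
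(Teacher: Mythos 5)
Your proposal is correct and follows essentially the same route as the paper: start from Proposition \ref{p.qinlr} (after the H\"older reduction $\Gamma(1,a_0,b_0)\lesssim\Gamma(k,a_0,b_0)$), then iterate Lemma \ref{l.hest} together with the embedding $\mathscr{H}^{p,1}\hookrightarrow L^{p(d+2)/(d+2-p)}$ of \cite[Theorem 7.1]{mpr10} on nested cylinders, with exactly the recursion $1/r_{n+1}=1/p_{n+1}-1/(d+2)$ that the paper uses. Your fixed-point analysis of $\Phi$ is just a repackaging of the paper's observation that $1/r_n$ decreases to $(d+2-k)/(d+2)$, and the final application of Lemma \ref{l.hest} for the $\mathscr{H}^{p,1}$ statement matches the paper as well.
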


\begin{proof}
Fix a parameter $m$ which will be specified later on and define $a_n := a_0 + n(a-a_0)/m$ and $b_n := b_0 - n(b_0-b)/m$ for $n=1, \ldots,
m$.
Picking $r_1 \in (1, (d+2)/(d+1))$, it follows from Proposition \ref{p.qinlr} that $\rho \in L^{r_1}((a_1,b_1)\times \CR^d)$.
Now assume that $\rho \in L^{r_n}((a_n,b_n) \times \CR^d)$. It follows from Lemma \ref{l.hest}, that
$\rho \in \mathscr{H}^{p_n,1}((a_{n+1}, b_{n+1})\times \CR^d)$, where
$p_n = r_nk/(r_n+k-1)$. By \cite[Theorem 7.1]{mpr10} $\mathscr{H}^{p_n,1}((a_{n+1}, b_{n+1})\times \CR^d)$ is continuously embedded into
$L^{r_{n+1}}((a_{n+1}, b_{n+1})\times \CR^d)$, where
\begin{eqnarray*}
\frac{1}{r_{n+1}} = \frac{1}{p_n} - \frac{1}{d+2} = \frac{1}{r_n}\bigg (1-\frac{1}{k}\bigg ) + \frac{1}{k} - \frac{1}{d+2}\, .
\end{eqnarray*}
Since $r_1^{-1}> (d+1)/(d+2)$, we see that
\begin{eqnarray*}
\frac{1}{r_2} - \frac{1}{r_1} < - \frac{1}{k}\bigg( 1 - \frac{1}{d+2}\bigg ) + \frac{1}{k} - \frac{1}{d+2}= \frac{1}{d+2}\bigg( \frac{1}{k} - 1\bigg) < 0\, .
\end{eqnarray*}
Proceeding inductively, we see that $\frac{1}{r_n}$ is a decreasing sequence of positive numbers, hence it is convergent. Its limit is easily seen
to be $(d+2-k)/(d+2)$.

Therefore, given $r< (d+2)/(d+2-k)$, after finitely many steps we have $r_n>r$. The number of steps needed is our parameter $m$.
Thus, after $m$ steps, we have $\rho \in L^r(Q(a,b))$. The assertion concerning $\mathscr{H}^{p,1}$ follows from
Lemma \ref{l.hest}.
\end{proof}

\begin{cor}\label{c.boundedness}
Let $0 < a_0<a<b<b_0<1$. If $\Gamma (k, a_0,b_0) < \infty$ for some $k> d+2$, then $\rho$ belongs to
 $\mathscr{H}^{p,1}$, for some $p>d+2$, and also to $L^\infty(Q(a,b))$.
\end{cor}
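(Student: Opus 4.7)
My plan is to combine Hölder's inequality with Proposition \ref{p.hest} to first upgrade $\rho$ to arbitrarily high $L^r$-integrability, then feed this back into Lemma \ref{l.hest} to land in $\mathscr{H}^{p,1}$ for some $p>d+2$, and finally invoke a parabolic Sobolev embedding to conclude $\rho \in L^\infty$.

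First, since $\rho(s,\cdot)$ is the density of a probability measure on $\CR^d$ for almost every $s$, so that $\int_{\CR^d}\rho(s,x)\,dx=1$, Hölder's inequality applied to the factorization $|F|^{k'}\rho=(|F|^k\rho)^{k'/k}\rho^{1-k'/k}$ with conjugate exponents $k/k'$ and $k/(k-k')$ yields
\[
\Gamma(k',a_0,b_0) \le \Gamma(k,a_0,b_0)\,(b_0-a_0)^{(k-k')/(kk')} < \infty,
\]
for every $1<k'\le k$. In particular $\Gamma(d+2,a_0,b_0)<\infty$, so I can pick intermediate times $a_0<a_1<a<b<b_1<b_0$ and apply Proposition \ref{p.hest} on the quadruple $(a_0,a_1,b_1,b_0)$ with the exponent there taken to be $d+2$. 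Interpreting $(d+2)/(d+2-(d+2))$ as $+\infty$, this gives $\rho \in L^r(Q(a_1,b_1))$ for every $r\in[1,\infty)$.

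Next, I apply Lemma \ref{l.hest} on $(a_1,b_1)\supset(a,b)$ using the original exponent $k>d+2$ together with an $r$ chosen large. The lemma produces $\rho\in\mathscr{H}^{p,1}(Q(a,b))$ with $p=rk/(r+k-1)$. Since $p\nearrow k$ as $r\to\infty$ and $k>d+2$, a sufficiently large choice of $r$ yields some $p>d+2$. To finish, in the supercritical regime $p>d+2$ the parabolic Sobolev embedding underlying \cite[Theorem 7.1]{mpr10} gives $\mathscr{H}^{p,1}(Q(a,b))\hookrightarrow L^\infty(Q(a,b))$, whence $\rho\in L^\infty(Q(a,b))$.

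The only point requiring care is the bookkeeping of the nested time intervals, so that Proposition \ref{p.hest} and Lemma \ref{l.hest} are each applied in a range where their hypotheses are genuinely satisfied; the Hölder reduction and the final Sobolev embedding are entirely standard, and essentially no new technical work is required beyond chaining the two previously established statements.
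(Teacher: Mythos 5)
Your proposal is correct and follows essentially the same route as the paper: Hölder's inequality reduces $\Gamma(k,\cdot,\cdot)$ to $\Gamma(d+2,\cdot,\cdot)$, Proposition \ref{p.hest} (with exponent $d+2$) gives $\rho\in L^r$ on a slightly smaller cylinder for all finite $r$, Lemma \ref{l.hest} with the original $k>d+2$ then yields $\rho\in\mathscr{H}^{p,1}(Q(a,b))$ with $p=rk/(r+k-1)$ close to $k$, and the embedding of \cite[Theorem 7.1]{mpr10} gives boundedness. The only cosmetic difference is that you keep the factor $(b_0-a_0)^{(k-k')/(kk')}$ explicit, whereas the paper absorbs it using $b_0-a_0\le 1$.
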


\begin{proof}
Let $a_0<a_1<a$ and $b<b_1<b_0$. Since, by H\"older's inequality, $\Gamma(d+2,a_0,b_0)\le \Gamma (k,a_0,b_0)<\infty$,
it follows from Proposition \ref{p.hest} that $\rho \in L^r((a_1,b_1)\times \CR^d)$ for all $r \in [1,\infty)$. Thus, by Lemma \ref{l.hest},
$\rho \in \mathscr{H}^{p,1}(Q(a,b))$ for all $p\in (1, k)$. In particular, $\rho \in \mathscr{H}^{p_0,1}(Q(a,b))$ for
some $p_0\in (d+2,k)$. By \cite[Theorem 7.1]{mpr10},
 $\mathscr{H}^{p_0,1}(Q(a,b))$ is continuously embedded into $L^{\infty}(Q(a,b))$.
\end{proof}

\subsection{Boundedness of weak solutions to nonautonomous parabolic problems} We next consider functions $u$ which are, in some sense,
weak solutions to an inhomogeneous parabolic equation $\partial_t u - \A(t)u = f$ and provide an estimate of their supremum norm.
A related result was proved in \cite[Theorem 7.3]{mpr10}.
Note, however, that the constant obtained in that result depends on the sup norm of the diffusion coefficients. Therefore \cite[Theorem 7.3]{mpr10}
cannot be used in the approximation argument in the next section.
For this reason, we present a different result here which is a parabolic version of \cite[Theorem A.1]{ffmp09}, where the elliptic equation
was considered.\smallskip

Before stating and proving the main result of this subsection, we need some preparation. First, we recall an embedding result from Chapter 2, \S 3 of \cite{lsu} and prove an integration by parts formula.

\begin{lem}\label{l.embed}
Let $d \geq 2$, $p$ and $q$ be given such that $\frac{1}{p} + \frac{d}{2q} = \frac{d}{4}$. Here we have $p \in [2, \infty]$ and $q \in [2, 2d/(d-2)]$
in the case where $d \geq 3$ and $p \in (2,\infty]$, $q \in [2,\infty)$ in the case where $d=2$.
Then every function in $W^{0,1}_2(Q(a,b))\cap L^\infty(a,b; L^2(\CR^d))$ belongs to $L^p(a,b; L^q(\CR^d))$.
Moreover,  there is a constant $c_S$, which is independent of $a,b$ in bounded subsets of $\CR$,
 such that for $u \in  W^{0,1}_2(Q(a,b))\cap L^\infty(a,b; L^2(\CR^d))$ we have
\begin{eqnarray*}
\|u\|_{p,q} \leq c_S (\|u\|_{\infty, 2} + \|\nabla u\|_2)\, .
\end{eqnarray*}
\end{lem}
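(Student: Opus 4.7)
The plan is to reduce to the standard spatial Sobolev embedding applied slice-by-slice in time, and then to interpolate between $L^2(\CR^d)$ and the Sobolev endpoint so that, after integration in~$t$, the gradient norm $\|\nabla u\|_2$ appears with exponent~$2$ (forcing the scaling relation $\frac{1}{p}+\frac{d}{2q}=\frac{d}{4}$).

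First I would handle $d\ge 3$. For almost every $t\in(a,b)$, the Sobolev embedding $W^{1,2}(\CR^d)\hookrightarrow L^{2^*}(\CR^d)$ with $2^*=\tfrac{2d}{d-2}$ gives $\|u(t,\cdot)\|_{L^{2^*}}\le C_d\|\nabla u(t,\cdot)\|_{L^2}$. Together with the $L^r$-interpolation inequality
\[
\|u(t,\cdot)\|_{L^q}\le \|u(t,\cdot)\|_{L^2}^{1-\theta}\|u(t,\cdot)\|_{L^{2^*}}^{\theta}, \qquad \frac{1}{q}=\frac{1-\theta}{2}+\frac{\theta}{2^*},
\]
valid for $q\in[2,2^*]$, the key move is to pick $\theta=2/p$ so that raising to the $p$-th power produces exactly $\|\nabla u(t,\cdot)\|_{L^2}^{2}$ times a factor involving $\|u(t,\cdot)\|_{L^2}^{p-2}$. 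Bounding the latter by $\|u\|_{\infty,2}^{p-2}$ and integrating in $t$ over $(a,b)$, one obtains
\[
\int_a^b \|u(t,\cdot)\|_{L^q}^{p}\, dt \le C_d^{\,p\theta}\,\|u\|_{\infty,2}^{p-2}\,\|\nabla u\|_2^{\,2},
\]
which yields the claimed estimate after taking $p$-th roots and using the elementary inequality $(\alpha^{p-2}\beta^2)^{1/p}\le \alpha+\beta$. A straightforward computation shows that the compatibility condition forced by the choice $\theta=2/p$ is precisely $\frac{1}{p}+\frac{d}{2q}=\frac{d}{4}$, and that $q\in[2,2^*]$ corresponds to $p\in[2,\infty]$.

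For the case $d=2$ the Sobolev embedding fails at $2^*=\infty$, so I would instead use $W^{1,2}(\CR^2)\hookrightarrow L^{\tilde q}(\CR^2)$ for an arbitrary fixed $\tilde q\in[2,\infty)$ chosen according to the target~$q$, and then repeat the same interpolation-and-integration scheme; this covers the prescribed range $p\in(2,\infty]$, $q\in[2,\infty)$. The endpoint $p=\infty$, $q=2$ is trivial, and $p=2$, $q=2^*$ is the bare Sobolev embedding integrated in $t$. Finally, one notes that every constant produced above is a pure Sobolev/interpolation constant depending only on $d$, $p$ and $q$, so $c_S$ is automatically uniform in $a,b$ on bounded subsets of $\CR$.

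No substantive obstacle is anticipated—this is the classical Ladyzhenskaya parabolic embedding from \cite[Chapter 2, \S 3]{lsu}. The only delicate point is the bookkeeping that selects $\theta=2/p$ as the unique exponent reducing the time integral of $\|\nabla u(t,\cdot)\|_{L^2}^{p\theta}$ to the controllable quantity $\|\nabla u\|_2^{\,2}$, and this bookkeeping is exactly what forces the scaling identity stated in the lemma.
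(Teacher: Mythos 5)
The paper does not actually prove this lemma; it is quoted from \cite{lsu}, Chapter II, \S 3, so there is no internal argument to compare with, and your proposal amounts to reproving the classical embedding. Your slice-wise argument for $d\ge 3$ is the standard proof and is correct: the homogeneous Sobolev inequality $\|u(t)\|_{2^*}\le C_d\|\nabla u(t)\|_2$, interpolation with $\theta=2/p$ (which is exactly equivalent to $\tfrac1p+\tfrac{d}{2q}=\tfrac d4$ for $q\in[2,2^*]$), the identity $p(1-\theta)=p-2$, and integration in $t$ give $\|u\|_{p,q}^p\le C\,\|u\|_{\infty,2}^{p-2}\|\nabla u\|_2^2$, and the endpoints $p=\infty$, $p=2$ are as you say; the constants are dimensional, hence uniform in $a,b$.

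The case $d=2$, however, is not covered by ``the same interpolation-and-integration scheme'' with a fixed finite $\tilde q$, and this is a genuine gap. If you interpolate $\|u(t)\|_q\le\|u(t)\|_2^{1-\theta}\|u(t)\|_{\tilde q}^{\theta}$ with $\tfrac1q=\tfrac{1-\theta}2+\tfrac{\theta}{\tilde q}$ and bound $\|u(t)\|_{\tilde q}\le C\big(\|u(t)\|_2+\|\nabla u(t)\|_2\big)$, then making the time integral finite forces $p\theta\le 2$, i.e.\ $p\le 2\big(\tfrac12-\tfrac1{\tilde q}\big)/\big(\tfrac12-\tfrac1q\big)=\big(1-\tfrac2{\tilde q}\big)\tfrac{2q}{q-2}$, which is strictly smaller than the exponent $p=\tfrac{2q}{q-2}$ required by the stated relation $\tfrac1p+\tfrac1q=\tfrac12$, no matter how large the finite $\tilde q$ is chosen. (Taking $\tilde q=q$ with no interpolation is worse: you would need $\int_a^b\|u(t)\|_{W^{1,2}}^p\,dt$ with $p>2$, which the hypotheses do not control.) What is needed in dimension two is the homogeneous Ladyzhenskaya--Gagliardo--Nirenberg inequality $\|v\|_{L^q(\CR^2)}\le C_q\,\|v\|_{L^2}^{2/q}\|\nabla v\|_{L^2}^{1-2/q}$, valid for every $q\in[2,\infty)$; with it the bookkeeping is identical to the case $d\ge3$, since $\tfrac1p+\tfrac1q=\tfrac12$ gives the gradient exponent $p(1-2/q)=2$. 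That inequality can be obtained from the inhomogeneous embedding you invoke by applying it to $v(\lambda\,\cdot)$ and optimizing over $\lambda$ (or cited directly), but this scaling step---not plain $L^2$--$L^{\tilde q}$ interpolation---is what actually closes the case $d=2$.
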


\begin{lem}
\label{lem-cortona}
Let $u\in \mathscr{H}^{p,1}(Q(a,b))$ for some $p>d+2$, $\ell>0$ and $\vartheta:\CR^d\to\CR$ be a nonnegative, smooth and compactly supported function.
Then, $\vartheta (u-\ell)_+ \in W^{0,1}_{p'}(Q(a,b))$ and
\begin{equation}
\int_{Q(a,b)}\vartheta(u-\ell)_+\partial_tu\,dt\,dx
=\frac{1}{2}\left[\int_{\CR^d}\vartheta((u(b)-\ell)_+)^2\,dx-\int_{\CR^d}\vartheta((u(a)-\ell)_+)^2\,dx\right]\,.
\label{AA0}
\end{equation}
\end{lem}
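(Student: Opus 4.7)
The strategy is to verify the $W^{0,1}_{p'}$-membership by a Sobolev chain rule argument, and then to obtain \eqref{AA0} by mollifying $u$ in time, applying the elementary identity $(w-\ell)_+\partial_tw=\tfrac12\partial_t((w-\ell)_+^2)$ valid for $w$ smooth in $t$, and passing to the limit.

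\textbf{Membership.} Since $s\mapsto (s-\ell)_+$ is Lipschitz with constant $1$, the Sobolev chain rule gives $(u-\ell)_+\in W^{0,1}_p(Q(a,b))$ with $\nabla_x(u-\ell)_+=\one_{\{u>\ell\}}\nabla_xu$ almost everywhere. The product $\vartheta(u-\ell)_+$ therefore lies in $W^{0,1}_p(Q(a,b))$ and is supported in $[a,b]\times B_R$ for some $R>0$ with $\mathrm{supp}\,\vartheta\subset B_R$. Since $p>d+2>p'$ and $[a,b]\times B_R$ has finite Lebesgue measure, H\"older's inequality yields $\vartheta(u-\ell)_+\in W^{0,1}_{p'}(Q(a,b))$.

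\textbf{Identity for smooth approximants.} The condition $p>d+2$, combined with standard parabolic embedding theory for $\mathscr{H}^{p,1}(Q(a,b))$ (compatible with \cite[Theorem 7.1]{mpr10}, invoked in Corollary \ref{c.boundedness}), ensures that $u$ admits well-defined endpoint traces $u(a,\cdot)$ and $u(b,\cdot)$, so that the right-hand side of \eqref{AA0} is meaningful. I then extend $u$ to a slightly larger time-slab $(a',b')\times\CR^d$ with $a'<a<b<b'$ (for instance by reflection in $t$ across the endpoints) so that the extension remains in $\mathscr{H}^{p,1}$, and mollify in time by convolution with a one-dimensional kernel $\rho_\eps$, producing $u_\eps:=\rho_\eps*_tu$. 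For sufficiently small $\eps$, $u_\eps$ is smooth in $t$ on $Q(a,b)$ and, as $\eps\to 0^+$, one has $u_\eps\to u$ in $W^{0,1}_p(Q(a,b))$, $\partial_tu_\eps\to\partial_tu$ in $(W^{0,1}_{p'}(Q(a,b)))'$, together with $u_\eps(a,\cdot)\to u(a,\cdot)$ and $u_\eps(b,\cdot)\to u(b,\cdot)$ locally in the relevant function space. For each such $\eps$, since $\vartheta$ is $t$-independent and $u_\eps$ is smooth in $t$, the pointwise identity $\vartheta(u_\eps-\ell)_+\partial_tu_\eps=\tfrac12\partial_t[\vartheta((u_\eps-\ell)_+)^2]$ is elementary; integrating in $x$ via Fubini and in $t$ via the one-dimensional fundamental theorem of calculus yields \eqref{AA0} with $u_\eps$ in place of $u$.

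\textbf{Passage to the limit and main obstacle.} The right-hand side passes to the limit by local convergence of the endpoint traces on the compact set $\mathrm{supp}\,\vartheta$; the left-hand side converges because $\vartheta(u_\eps-\ell)_+\to\vartheta(u-\ell)_+$ in $W^{0,1}_{p'}(Q(a,b))$ while $\partial_tu_\eps\to\partial_tu$ in the corresponding dual. The main technical difficulty is precisely this $W^{0,1}_{p'}$-convergence, since the indicator $\one_{\{u>\ell\}}$ appearing in the gradient is a discontinuous function of $u$; the standard resolution is to split $\nabla_x(u_\eps-\ell)_+-\nabla_x(u-\ell)_+=\one_{\{u_\eps>\ell\}}(\nabla_xu_\eps-\nabla_xu)+(\one_{\{u_\eps>\ell\}}-\one_{\{u>\ell\}})\nabla_xu$, where the first summand vanishes in $L^p$ by convergence of the spatial gradients and the second by dominated convergence combined with Stampacchia's lemma, which guarantees $\nabla_xu=0$ almost everywhere on the level set $\{u=\ell\}$.
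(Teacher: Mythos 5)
Your proposal is correct in substance, but it takes a genuinely different route from the paper. The paper does not build approximants by hand: it invokes the density of $C_c^\infty(\CR^{d+1})$ in the $\mathscr{H}^{p,1}$-norm (\cite[Lemma 7.1]{mpr10}) together with the embedding $\mathscr{H}^{p,1}(Q(a,b))\embed L^\infty(Q(a,b))$ for $p>d+2$ (\cite[Theorem 7.1]{mpr10}), so that its approximants $u_n$ are globally smooth and converge \emph{uniformly} on $Q(a,b)$; it then splits $\int\vartheta(u_n-\ell)_+\partial_tu$ into $\int\vartheta(u_n-\ell)_+\partial_tu_n$, which is computed exactly, plus an error term controlled by the boundedness of $\vartheta(u_n-\ell)_+$ in $W^{0,1}_{p'}$ and the convergence $\partial_tu_n\to\partial_tu$ in $(W^{0,1}_{p'})'$, while the endpoint terms converge simply by uniform convergence on $\supp\vartheta$. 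You instead manufacture the smooth-in-time approximants via reflection across $t=a,b$ and one-dimensional mollification, and you are actually more explicit than the paper on the one delicate convergence, namely $\nabla_x(u_\eps-\ell)_+\to\nabla_x(u-\ell)_+$ in $L^{p'}$, which you settle by the splitting with the indicator $\one_{\{u_\eps>\ell\}}$ and the fact that $\nabla_xu=0$ a.e.\ on $\{u=\ell\}$ (the paper asserts the analogous $W^{0,1}_{p'}$-convergence of $\vartheta(u_n-\ell)_+$ without comment, although uniform convergence alone does not dispose of the gradient term). The price of your route is that two steps are asserted rather than proved: that the reflected function remains in $\mathscr{H}^{p,1}$ of the enlarged slab (this is true, and can be verified because the reflected pairing against a test function $\varphi$ involves $\psi(s)=\varphi(s)-\varphi(2a-s)$, which vanishes at $s=a$, so no boundary contribution is created, but it does require either this computation or the trace theory of \cite{krylov01}), and that the endpoint values $u_\eps(a,\cdot)$, $u_\eps(b,\cdot)$ converge suitably, which again rests on the trace/continuity theory for $\mathscr{H}^{p,1}$ that the paper bypasses through its uniformly convergent smooth sequence. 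Both points are fillable, and citing the density lemma of \cite{mpr10} would let you drop the extension--mollification construction altogether, essentially collapsing your argument onto the paper's.
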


\begin{proof}
We observe first that $\vartheta (u-\ell)_+ \in W^{0,1}_{p'}(Q(a,b))$, since
$\vartheta (u-\ell)_+=\left(\vartheta (u-\ell)\right)_+$ and $\vartheta (u-\ell)\in W^{0,1}_{p'}(Q(a,b))$ (cf. \cite[Lemma 7.6]{GT}).

Let $(u_n)\in C^{\infty}_c(\CR^{d+1})$ be a sequence converging to $u$ in the $\mathscr{H}^{p,1}$-norm (see \cite[Lemma 7.1]{mpr10}). Since $p>d+2$, it follows from \cite[Theorem 7.1]{mpr10} that ${\mathscr H}^{p,1}(Q(a,b))$ is continuously embedded into $L^{\infty}(Q(a,b))$. Hence,
$u_n$ converges to $u$ uniformly in $Q(a,b)$. So, we can deduce that $\vartheta (u_n-\ell)_+$ tends to $\vartheta (u-\ell)_+$ in $W_{p'}^{0,1}(Q(a,b))$. Hence,
\begin{align}
\int_{Q(a,b)}\vartheta(u-\ell)_+\partial_tu\,dt\,dx=\lim_{n\to \infty}\int_{Q(a,b)}\vartheta(u_n-\ell)_+\partial_tu\,dt\,dx
\label{AA}
\end{align}
follows since $\partial_t u\in (W_{p'}^{0,1}(Q(a,b)))'$.

We now split
\begin{align}
\int_{Q(a,b)}\vartheta(u_n-\ell)_+\partial_tu\,dt\,dx & =
\int_{Q(a,b)}\vartheta(u_n-\ell)_+\partial_tu_n\,dt\,dx\notag\\
&\quad+\int_{Q(a,b)}\vartheta(u_n-\ell)_+(\partial_tu-\partial_tu_n)\,dt\,dx\notag\\
&=\frac{1}{2}\int_{Q(a,b)}\vartheta \partial_t((u_n-\ell)_+^2)\,dt\,dx\notag\\
&\quad +\int_{Q(a,b)}\vartheta(u_n-\ell)_+(\partial_tu-\partial_tu_n)\,dt\,dx\notag\\
&=\frac{1}{2}\int_{\CR^d}\vartheta[(u_n(b)-\ell)_+^2-(u_n(a)-\ell)_+^2]\,dx\notag\\
&\quad +\int_{Q(a,b)}\vartheta(u_n-\ell)_+(\partial_tu-\partial_tu_n)\,dt\,dx.
\label{BB}
\end{align}
We claim that
\begin{equation}
\left\{
\begin{array}{l}
\displaystyle \mathrm{(i)}~\lim_{n\to\infty}
\int_{Q(a,b)}\vartheta (u_n-\ell)_+(\partial_tu-\partial_tu_n)\,dt\,dx=0,\\[5mm]
\displaystyle  \mathrm{(ii)}~\lim_{n\to\infty}\int_{\CR^d}\vartheta (u_n(s)-\ell)_+^2\,dx=
\int_{\CR^d}\vartheta (u(s)-\ell)_+^2\,dx,\qquad\;\,s\in [a,b]\,.
\end{array}
\right.
\label{CC}
\end{equation}
Property (i) follows immediately from the boundedness of the sequence $(\vartheta (u_n-\ell)_+)$ in
$W^{0,1}_{p'}(Q(a,b))$ and the convergence of $\partial_tu_n$ to $\partial_tu$ in $(W^{0,1}_{p'}(Q(a,b)))'$.
Property (ii) follows from the uniform convergence of $u_n$ to $u$ in $Q(a,b)$ and the fact that $\vartheta$
is compactly supported in $\CR^d$.

From \eqref{AA}, \eqref{BB} and \eqref{CC}, we get \eqref{AA0}.
\end{proof}

The following is the main result of this subsection.

\begin{thm}\label{t.auxiliary}
Let $q_{ij} \in C^{\frac{\varsigma}{2}, \varsigma}_\loc([0,1]\times \CR^d)\cap C_b([0,1]\times \CR^d)$ be such that
$q_{ij} = q_{ji}$ for $i,j=1, \ldots, d$ and such that $\langle Q(t,x) \xi, \xi\rangle \geq \eta |\xi|^2$ for a certain $\eta >0$
and any $(t,x) \in [0,1]\times \CR^d, \xi \in \CR^d$.

Further, let $0\leq a_0 < b_0 \leq 1$, $r>d+2$
and let functions  $f \in L^\frac{r}{2}(Q(a_0,b_0))$,
$h= (h_i) \in L^r(Q(a_0,b_0), \CR^d)$ and $u \in \mathscr{H}^{p,1}(Q(a_0,b_0))$, for some $p>d+2$, be given such that $u(b_0) \equiv 0$ and
\begin{equation}\label{eq.weak}
\int_{Q(a_0,b_0)}\big [\langle Q\nabla u,\nabla \varphi\rangle - \varphi \partial_t u\big ] \, dt\,dx
= \int_{Q(a_0,b_0)} f\varphi \, dt\,dx + \int_{Q(a_0,b_0)}\langle h, \nabla \varphi\rangle \, dt\,dx\,,
\end{equation}
for all $\varphi \in C_c^\infty (\bar{Q}(a_0,b_0))$. Then, $u$ is bounded and there exists a constant $C>0$, depending only on
$\eta,d$ and  $r$ (but \emph{not} depending on $\|Q\|_{\infty}$) such that
\begin{eqnarray*}
\|u\|_{\infty}\leq C \big( \|u\|_{\infty, 2} + \|f\|_{\frac{r}{2}} + \|h\|_{r}\big)\,.
\end{eqnarray*}
\end{thm}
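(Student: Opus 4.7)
My plan is to prove Theorem~\ref{t.auxiliary} by the De~Giorgi--Stampacchia iteration on super-level sets, arranging the estimates so that every constant depends only on $\eta$, $d$ and $r$, independently of $\|Q\|_\infty$. Replacing $u$ by $-u$ (and $f,h$ by their negatives) shows that it suffices to bound $\|u_+\|_\infty$, so I shall work with
\begin{equation*}
A_\ell:=\{(t,x)\in Q(a_0,b_0):u(t,x)>\ell\},\qquad \ell\ge 0,
\end{equation*}
and prove that $A_\ell=\emptyset$ once $\ell\ge C(\|u\|_{\infty,2}+\|f\|_{r/2}+\|h\|_r)$.

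The first step is an energy estimate on $A_\ell$. Given $\tau\in(a_0,b_0)$ and a spatial cutoff $\vartheta_R\in C_c^\infty(\CR^d)$ with $\vartheta_R\equiv 1$ on $B_R$, I use $\varphi=\vartheta_R(u-\ell)_+$ in \eqref{eq.weak} restricted to the time interval $(\tau,b_0)$, after a standard mollification in time in order to match the admissible class of \eqref{eq.weak}. Uniform ellipticity yields the pointwise bound $\vartheta_R\langle Q\nabla u,\nabla(u-\ell)_+\rangle\ge \eta\vartheta_R|\nabla(u-\ell)_+|^2$, and Lemma~\ref{lem-cortona} applied on $(\tau,b_0)$ combined with $u(b_0)\equiv 0$ and $\ell\ge 0$ identifies the time-derivative contribution as $-\tfrac12\int_{\CR^d}\vartheta_R[(u(\tau)-\ell)_+]^2\,dx$. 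The only place in which $\|Q\|_\infty$ enters is the cross term $\langle Q\nabla u,\nabla\vartheta_R\rangle(u-\ell)_+$, which vanishes as $R\to\infty$ by dominated convergence, since $u\in\mathscr{H}^{p,1}\hookrightarrow L^\infty$, $\nabla u\in L^p$ and $\mathrm{meas}(A_\ell)<\infty$ for $\ell>0$ (Chebyshev). Taking the supremum in $\tau$ and estimating the right-hand side of \eqref{eq.weak} by H\"older's inequality produces
\begin{align*}
\tfrac12\|(u-\ell)_+\|_{\infty,2}^2+\eta\|\nabla(u-\ell)_+\|_2^2 \le {}& \|f\|_{L^{r/2}(A_\ell)}\|(u-\ell)_+\|_{L^{(r/2)'}(A_\ell)}\\
& + \|h\|_{L^r(A_\ell)}\|\nabla(u-\ell)_+\|_{L^{r'}(A_\ell)}.
\end{align*}

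Next, the parabolic Sobolev embedding of Lemma~\ref{l.embed} applied with $p=q=2(d+2)/d$ controls $\|(u-\ell)_+\|_{L^{2(d+2)/d}}$ by $c_S(\|(u-\ell)_+\|_{\infty,2}+\|\nabla(u-\ell)_+\|_2)$. Interpolating $L^{(r/2)'}$ for $(u-\ell)_+$ between $L^{2(d+2)/d}$ and a constant on $A_\ell$, and using $L^{r'}\subset L^2(A_\ell)$ for $\nabla(u-\ell)_+$ (valid since $r>d+2\ge 3$), then absorbing $\|\nabla(u-\ell)_+\|_2^2$ on the left via Young's inequality, the estimate sharpens to
\begin{equation*}
\|(u-\ell)_+\|_{L^{2(d+2)/d}(Q(a_0,b_0))}^2\le C_1(\|f\|_{r/2}+\|h\|_r)^2\,\mathrm{meas}(A_\ell)^{2\kappa},
\end{equation*}
with $\kappa:=\min\bigl\{\tfrac{r-2}{r}-\tfrac{d}{2(d+2)},\,\tfrac{r-2}{2r}\bigr\}$ and $C_1=C_1(\eta,d,r,c_S)$ independent of $\|Q\|_\infty$; a direct computation reveals $\kappa>\tfrac{d}{2(d+2)}$ precisely because $r>d+2$.

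For $h>\ell\ge 0$, Chebyshev's inequality $(h-\ell)^{2(d+2)/d}\mathrm{meas}(A_h)\le \|(u-\ell)_+\|_{2(d+2)/d}^{2(d+2)/d}$ combines with the display above into the recurrence
\begin{equation*}
\mathrm{meas}(A_h)\le \frac{C_2(\|f\|_{r/2}+\|h\|_r)^{2(d+2)/d}}{(h-\ell)^{2(d+2)/d}}\,\mathrm{meas}(A_\ell)^{\gamma},\qquad \gamma=\tfrac{2\kappa(d+2)}{d}>1.
\end{equation*}
The classical Stampacchia lemma (Chapter~II, \S5 of \cite{lsu}) then forces $\mathrm{meas}(A_{\ell_0+d_0})=0$ for an explicit $d_0$ depending only on $\mathrm{meas}(A_{\ell_0})$ and $\|f\|_{r/2}+\|h\|_r$; starting the iteration at $\ell_0:=\|u\|_{\infty,2}$, Chebyshev in the space variable gives $\mathrm{meas}(A_{\ell_0})\le b_0-a_0$, whence $\sup u\le \ell_0+d_0\le C(\|u\|_{\infty,2}+\|f\|_{r/2}+\|h\|_r)$, and the symmetric estimate on $-u$ concludes. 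The principal technical hurdle I anticipate lies not in the iteration itself but in the rigorous use of $\vartheta_R(u-\ell)_+$ as a test function in \eqref{eq.weak} together with the passage $R\to\infty$: \eqref{eq.weak} is stated only for smooth, compactly supported test functions, Lemma~\ref{lem-cortona} is formulated with a spatial cutoff, and the $\|Q\|_\infty$-dependent cross term must be controlled uniformly and shown to vanish in the limit.
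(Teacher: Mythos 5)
Your proposal is correct and follows essentially the same route as the paper: test the weak formulation with a spatial cutoff times $(u-\ell)_+$, identify the time-derivative contribution via Lemma \ref{lem-cortona} and $u(b_0)\equiv 0$, use ellipticity, H\"older with exactly the exponents of the paper (your $\kappa_1=\tfrac12-\tfrac2r+\tfrac1{d+2}$, $\kappa_2=\tfrac12-\tfrac1r$), the parabolic embedding of Lemma \ref{l.embed}, the supremum over initial times, and a level-set iteration on $|A_\ell|$. The only differences are cosmetic: you dispose of the $\|Q\|_\infty$-dependent cross term by a single cutoff power and dominated convergence rather than the paper's squared cutoff with Cauchy--Schwarz absorption, and you close the iteration with Stampacchia's lemma starting at $\ell_0=\|u\|_{\infty,2}$ instead of the paper's normalization $\|u\|_{\infty,2},\|f\|_{r/2},\|h\|_r\le 1$ followed by the doubling-level scheme of Giusti, which yields the same constant dependence on $\eta$, $d$, $r$ only.
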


\begin{proof}
A density argument shows that \eqref{eq.weak} is also satisfied by $\varphi \in W^{0,1}_2(Q(a,b))$  such that there exists $R>0$ with $\varphi (t,x) = 0$ for all $t \in (a,b)$ and $|x|> R$.\smallskip

We first additionally assume that $\|u\|_{\infty, 2}, \|f\|_{\frac{r}{2}}, \|h\|_{r} \leq 1$. Note that $u\in L^{\infty}(Q(a_0,b_0))$ since $p>d+2$ (see \cite[Theorem 7.1]{mpr10}).

Fix $\ell>1$. Then $(u-\ell)_+ \in W^{0,1}_p(Q(a_0,b_0))$. It also belongs to $W^{0,1}_2(Q(a_0,b_0))$ since
$u\in L^{\infty}(Q(a_0,b_0))$, $\nabla (u-\ell)_+=\nabla u\one_{\{u\ge\ell\}}$ and $\one_{\{u\ge\ell\}}\in L^q(Q(a_0,b_0))$ for any $q\in [1,\infty]$.

If $\vartheta_n$ is a standard sequence of cutoff functions (in $x$), we may plug $\varphi := \vartheta_n^2(u-\ell)_+$ into \eqref{eq.weak}.
Thus, taking Lemma \ref{lem-cortona} into account and observing that $(u(b_0)-\ell)_+ \equiv 0$, \eqref{eq.weak} becomes
\begin{eqnarray*}
\begin{aligned}
&\phantom{=} \half \int_{\CR^d}\vartheta_n^2 (u(a_0)-\ell)_+^2\, dx + \int_{Q(a_0,b_0)} \vartheta_n^2 \langle Q \nabla (u-\ell)_+, \nabla (u-\ell)_+\rangle\, dt\,dx\\
&\qquad + 2\int_{Q(a_0,b_0)}\vartheta_n \langle Q\nabla (u-\ell)_+, \nabla \vartheta_n\rangle (u-\ell)_+\,dt\,dx\\
&=  \int_{Q(a_0,b_0)} f\vartheta_n^2 (u-\ell)_+\,dt\,dx + \int_{Q(a_0,b_0)} \vartheta_n^2\langle h,\nabla (u-\ell)_+\rangle\,dt\,dx\\
 &\qquad + 2\int_{Q(a_0,b_0)}\vartheta_n (u-\ell)_+\langle h, \nabla \vartheta_n\rangle \,dx\,.
\end{aligned}
\end{eqnarray*}

We have
$|\langle Q\nabla (u-\ell)_+, \nabla \vartheta_n\rangle  | \leq |Q^\half \nabla (u-\ell)_+ | |Q^\half \nabla \vartheta_n|$
by the Cauchy-Schwarz inequality.
Thus,
\begin{align*}
&\phantom{=} 2\Big|\int_{Q(a_0,b_0)}\vartheta_n \langle Q\nabla (u-\ell)_+, \nabla \vartheta_n\rangle (u-\ell)_+\,dt\,dx \Big|\\
&\leq   \frac{1}{2}\int_{Q(a_0,b_0)} \vartheta_n^2 \langle Q\nabla (u-\ell)_+, \nabla (u-\ell)_+\rangle\,dt\,dx
+ \frac{c\|Q\|_\infty}{n^2} \int_{Q(a_0,b_0)} (u-\ell)_+^2\,dt\,dx\,,
\end{align*}
for some positive constant $c$ independent of $n$.

We now put $A_{\ell}(t) := \{ u(t) \geq \ell\}$, $A_{\ell} := \{u\geq \ell\}$ and write $|A_{\ell}(t)|$ for the $d$-dimensional Lebesgue measure of $A_{\ell}(t)$
and $|A_{\ell}|$ for the $d+1$-dimensional Lebesgue measure of $A_{\ell}$.\smallskip

Let us now estimate the term involving $f$. Applying H\"older's inequality with exponent $\frac{r}{2},\,2+\frac{4}{d}$ and $s$ with $\frac{1}{s}=\frac{1}{2}-\frac{2}{r}+\frac{1}{d+2}$,
we obtain
\begin{eqnarray*}
\int_{Q(a_0,b_0)} \vartheta_n^2|f| (u-\ell)_+\, dt\, dx
 \leq  \|\vartheta_n f\|_{\frac{r}{2}} \| (u-\ell)_+\|_{2+\frac{4}{d}}|A_\ell|^{\frac{1}{2}-\frac{2}{r}+\frac{1}{d+2}}\,.
\end{eqnarray*}
So, by Lemma \ref{l.embed} with $p=q=2+\frac{4}{d}$, and since $\|f\|_{\frac{r}{2}}\le 1$, it follows that
\begin{equation}\label{eq.fest}
 \int_{Q(a_0,b_0)} \vartheta_n^2|f| (u-\ell)_+\, dt\, dx
 \leq  c_S \big(\|(u-\ell)_+\|_{\infty, 2} + \|\nabla (u-\ell)_+\|_2 \big) |
A_\ell|^{\frac{1}{2}-\frac{2}{r}+\frac{1}{d+2}}\,.
\end{equation}

As for the integral involving $h$, H\"older's inequality yields
\begin{equation}\label{eq.hest}
\int_{Q(a_0,b_0)}\vartheta_n^2 |h| |\nabla (u-\ell)_+|\, dt \, dx
\leq  \|h\|_{r} \|\nabla (u-\ell)_+\|_2 |A_\ell|^{\half - \frac{1}{r}}\,.
\end{equation}
Note that estimates \eqref{eq.fest} and \eqref{eq.hest} imply together with monotone convergence that the integrals
$\int_{Q(a_0,b_0)} f(u-\ell)_+\,dt\,dx$ and $\int_{Q(a_0,b_0)}\langle h,\nabla (u-\ell)_+\rangle\,dt\,dx$ exist. Finally, we estimate
\begin{eqnarray*}
\bigg| 2\int_{Q(a_0,b_0)} \vartheta_n (u-\ell)_+ \langle h,\nabla \vartheta_n\rangle\, dt\,dx\bigg| \leq
\frac{C}{n} \|h\|_{r} \|(u-\ell)_+\|_2 |A_\ell|^{\frac{1}{2} - \frac{1}{r}}\,.
\end{eqnarray*}
Collecting the estimates and letting $n\to \infty$, we obtain
\begin{align*}
&\phantom{=} \half \int_{\CR^d} (u(a_0,x)-\ell)_+^2\, dx + \half \int_{Q(a_0,b_0)} \langle Q \nabla (u-\ell)_+, \nabla (u-\ell)_+ \rangle \, dt\,dx\\
&\leq  \int_{Q(a_0,b_0)} |f|(u-\ell)_+\, dt\,dx +  \int_{Q(a_0,b_0)} |h||\nabla (u-\ell)_+|\, dt\,dx\,.
\end{align*}
Observe that we can repeat the above arguments with $a_0$ replaced with an element $a_0' \in (a_0,b_0)$. Taking the supremum over such $a_0'$ and using the estimates from \eqref{eq.fest} and \eqref{eq.hest}, as well as the ellipticity assumption, we obtain
\begin{align*}
& \phantom{=} \min\left\{1, \eta\right\} \big(\|(u-\ell)_+\|_{\infty, 2}^2 +\|\nabla (u-\ell)_+\|_2^2\big)\\
& \leq 2c_S\big( \|(u-\ell)_+\|_{\infty, 2}+\|\nabla (u-\ell)_+\|_2\big)
|A_\ell|^{\frac{1}{2}-\frac{2}{r}+\frac{1}{d+2}} + 2\|\nabla (u-\ell)_+\|_2 |A_\ell|^{\frac{1}{2} -\frac{1}{r}}\, .
\end{align*}
Noting that $\half -\frac{1}{r} < \frac{1}{2}-\frac{2}{r}+\frac{1}{d+2}$, by our assumption on $r$ and $\ell$, and that $|A_\ell| \leq 1$,
it follows that
\begin{equation}\label{eq.normest}
\|(u-\ell)_+\|_{\infty, 2} +\|\nabla (u-\ell)_+\|_2 \leq L |A_\ell|^{\half - \frac{1}{r}}\,,
\end{equation}
for a certain constant $L$. For $m>\ell$ we find
\begin{align}
(m-\ell)^2|A_m| & \leq  \int_{A_m} (u-\ell)_+^2\, dt\,dx\notag\\
&\leq \int_{A_\ell} (u-\ell)_+^2 \, dt\,dx\notag\\
&\leq  \|(u-\ell)_+\|_{2+\frac{4}{d}}^2 \|\one_{A_\ell}\|_{\frac{d+2}{2}}\notag\\
&\leq  c_S \big( \|(u-\ell)_+\|_{\infty,2} + \|\nabla (u-\ell)_+\|_2\big)^2|A_\ell|^{\frac{2}{d+2}}\notag\\
& \leq  Lc_S |A_\ell|^{1-\frac{2}{r}+\frac{2}{d+2}} =: \nu_d|A_\ell|^{1-\frac{2}{r} + \frac{2}{d+2}}\, .\label{eq.nuest}
\end{align}
Here we have used H\"older's inequality with exponents $1+\frac{2}{d}$ and $\frac{d+2}{2}$, Lemma \ref{l.embed} and estimate
\eqref{eq.normest} above.

Now, the proof can be completed following the same arguments as in
\cite[Theorem A.1]{ffmp09}. For the reader's convenience we provide all the details.
Let $\bar{\ell}\geq 1$ and define $\ell_n := 2\bar{\ell} - 2^{-n}\bar{\ell}$ for $n \in \CN$ and $y_n := |A_{\ell_n}|$. Using \eqref{eq.nuest}
 for $m=\ell_{n+1}$ and $\ell=\ell_n$, it follows that
\begin{eqnarray*}
y_{n+1} \leq \frac{4\nu_d}{\bar{\ell}^2} 2^{2n}y_n^{1+\alpha}\,,
\end{eqnarray*}
where $\alpha = \frac{2}{d+2} - \frac{2}{r}> 0$. If $y_0 \leq \big(\frac{4\nu_d}{\bar{\ell}^2}\Big)^{-\frac{1}{\alpha}}4^{-\frac{1}{\alpha^2}}$, then $y_n\to 0$ as $n\to \infty$ (cf. \cite[Lemma 7.1]{giusti03}) which implies that $|A_l| = 0$ for $l \geq 2\bar{\ell}$, i.e.\ $u \leq 2\bar{\ell}$. As $y_0 = |A_{\bar{\ell}}| \leq 1$, this estimate
holds if we pick $\bar{\ell} = \max\{ 1, 2^{1+\frac{1}{\alpha}}\sqrt{\nu_d}\}=: C/2$. Thus, $u \leq C$. Replacing $u$ with $-u$, by linearity
we obtain that also $-u\leq C$, hence $\|u\|_{\infty} \leq C$.\smallskip

We finally remove the additional assumption that $\|u\|_{\infty, 2}, \|f\|_{\frac{r}{2}}, \|h\|_{r} \leq 1$. To that end, let
$M := \|u\|_{\infty, 2} + \|f\|_{\frac{r}{2}} +\|h\|_{r}$ and define $\tilde{f} := f/M, \tilde{h}:= h/M$ and $\tilde{u} := u/M$.
Clearly, \eqref{eq.weak} holds for all $\varphi \in C_c^\infty(Q(a,b))$ when $u,f,h$ are replaced with
$\tilde{u},\tilde{f},\tilde{h}$. By the above $\|\tilde{u}\|_{\infty} \leq C$, thus $\|u\|_{\infty} \leq CM$ which is the thesis.
\end{proof}

\section{Pointwise estimates for the densities of evolution systems of measures}

In this section we consider the following assumptions.
\begin{hyp}\label{hyp2}
Let $t \in (0,1]$ and $(\mu_s)_{s \in [0,t]}$ be an evolution system of measures, such that $V \in L^1(\mu_t)$, and let $W_1$, $W_2$ be time dependent
Lyapunov functions for the operator $\partial_s - \A(s)$ on $[0,t]$ with $W_1 \leq W_2\le c_0V^{1-\sigma}$ for some constant $c_0>0$ and $\sigma\in (0,1)$. Moreover, let $0<a_0<a<b<b_0<t$ and
$1 \leq w \in C^{1,2}(Q(0,t))$ be a weight function such that the following holds true:
\begin{enumerate}
\item $w^{-2}\partial_sw$ and $w^{-2}\nabla w$ are bounded on $[a_0,b_0]\times \CR^d$;
\item there exist constants $c_1, \ldots, c_6$, possibly depending on the interval $(a_0,b_0)$, such that
\begin{eqnarray*}
\begin{array}{ll}
\mathrm{(i)} \quad w \leq c_1W_1\,, & \mathrm{(ii)} \quad |Q\nabla w| \leq c_2 w^{\frac{k-1}{k}}W_1^\frac{1}{k}\,,\\
 \mathrm{(iii)} \quad |\A_0 w| \leq c_3w^{\frac{k-2}{k}}W_1^\frac{2}{k}\,,
& \mathrm{(iv)} \quad |\partial_sw|\leq c_4w^{\frac{k-2}{k}}W_1^\frac{2}{k}\,,\\
 \mathrm{(v)} \quad  |\sum_{i=1}^d D_iq_{ij}| \leq c_5 w^{-\frac{1}{k}}W_2^\frac{1}{k}\,,\quad
& \mathrm{(vi)} \quad  w|F|^k \leq c_6W_2\,,
\end{array}
\end{eqnarray*}
on $[a_0,b_0]\times \CR^d$, where $k$ is any positive constant greater than $d+2$;
\item there exist constants $c_7, c_8$, possibly depending on the interval $(a_0,b_0)$, such that
$|\Delta w|\leq c_7 w^\frac{k-2}{k}W_1^\frac{2}{k}$ and $|Q\nabla W_1|\leq c_8w^\frac{k-1}{k}W_2^\frac{1}{k}$
on $[a_0,b_0] \times \CR^d$.
\end{enumerate}
\end{hyp}

In the situation of Hypothesis \ref{hyp2}, we write
\begin{eqnarray*}
\zeta_i(s) := \int_{\CR^d} W_i(s)\, d\mu_s\,,\qquad\;\,i=1,2\, .
\end{eqnarray*}
Note that $\zeta_i(s) < \infty$ for $s \in [0,t]$ and $i=1,2$ by Proposition \ref{p.lyapunov}.
The Borel measure $\nu$ is defined by \eqref{eq.nu} and $\rho$ denotes the density of $\nu$ with respect to $(d+1)$-dimensional Lebesgue measure.

In this section, we prove pointwise estimates for $\rho$ provided that Hypothesis \ref{hyp2} is satisfied. This is done by treating
the case of bounded diffusion coefficients first. Here only parts (1) and (2) of Hypothesis \ref{hyp2} are needed. We then prove  pointwise estimates
in the general case by approximation, making use of part (3) of Hypothesis \ref{hyp2}.
\medskip

As far as bounded diffusion coefficients are concerned, the following is our main result.

\begin{thm}\label{main-bounded}
Assume Hypothesis \ref{hyp2}(1)-(2) and that the diffusion coefficients of the operator $\A$ and their first order spatial derivatives are bounded. Then there exists a positive constant $C$, depending only on $\eta$ and $d$, such that
\begin{align}
w\rho \leq C \bigg [ & c_1 \sup_{s \in (a_0,b_0)} \zeta_1(s)
+ (c_2^{k}  + c_5^{k}+ c_6)\int_{a_0}^{b_0} \zeta_2(s)\, ds \notag \\
&  +
\Big( \frac{k^kc_1^2}{(b_0-b)^{k}} +c_2^{2k} + c_3^{k}+c_4^{k}\bigg) \bigg( \int_{a_0}^{b_0} \zeta_1(s)\, ds \bigg)^2 +c_2^{k}c_6\bigg( \int_{a_0}^{b_0} \zeta_2(s)\, ds \bigg)^2\notag \\
& +
\bigg( \frac{k^2c_1^\frac{4}{k}}{(b_0-b)^{2}}+c_2^4+c_3^2+ c_4^{2}\bigg) \bigg( \int_{a_0}^{b_0} \zeta_1(s)\, ds \bigg)^\frac{4}{k} +c_2^2c_6^\frac{2}{k}\bigg( \int_{a_0}^{b_0}\hskip -.5mm \zeta_2(s)\, ds \bigg)^\frac{4}{k}\bigg]\,,
\label{eq.final}
\end{align}
in $(a,b)\times\CR^d$.
\end{thm}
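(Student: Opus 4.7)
The strategy is to apply Theorem \ref{t.auxiliary} to a time-cutoff of $u := w\rho$. First, Hypothesis \ref{hyp2}(2)(vi) and $w\ge 1$ give $|F|^k\le c_6W_2$, so by Proposition \ref{p.lyapunov} the integrability $\Gamma(k,a',b')^k\le c_6\int_{a'}^{b'}\zeta_2(s)\,ds<\infty$ holds on every subinterval of $[0,t]$. Choosing an outer pair $a''<a_0<b_0<b''$, Corollary \ref{c.boundedness} gives $\rho\in L^\infty(Q(a_0,b_0))\cap\mathscr{H}^{p,1}(Q(a_0,b_0))$ for some $p>d+2$. Next I pick a time cutoff $\chi\in C^\infty([0,1])$, supported in $[a_0,b_0]$, identically $1$ on $[a,b]$, with $\|\chi'\|_\infty\le k/(b_0-b)$; the explicit choice $\chi(s)=((b_0-s)_+/(b_0-b))^k$ on $[b,b_0]$ (and a mirror piece near $a_0$) is what produces the $k^k$ constant in the final estimate.

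Setting $\tilde u:=\chi u=\chi w\rho$, I test the adjoint identity of Lemma \ref{l.differential} against $\psi:=\chi w\varphi$ for $\varphi\in C_c^\infty(\bar Q(a_0,b_0))$. Using the product formula $\A(w\varphi)=w\A\varphi+\varphi\A w+2\langle Q\nabla w,\nabla\varphi\rangle$, integration by parts in both $s$ and $x$, and the identity $\int\A_0 u\,\varphi=-\int\langle Q\nabla u,\nabla\varphi\rangle-\int u\langle b,\nabla\varphi\rangle$ with $b_j:=\sum_i D_iq_{ij}$, one checks after some bookkeeping that $\tilde u$ satisfies \eqref{eq.weak} on $Q(a_0,b_0)$ with
\[
f=-\chi' u+\tilde u\,(\A w-\partial_s w)/w,\qquad h=\tilde u\,(F-b)+2\tilde u\,Q\nabla w/w.
\]
The condition $\tilde u(b_0)=0$ is built into the cutoff. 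Since $w$ need not be globally bounded in $x$, the needed membership $\tilde u\in\mathscr{H}^{p,1}(Q(a_0,b_0))$ is obtained via a spatial truncation $w_N:=w\wedge N$ together with uniform-in-$N$ bounds and passage to the limit.

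Applying Theorem \ref{t.auxiliary} with $r:=k$ yields $\|\tilde u\|_\infty\le C(\|\tilde u\|_{\infty,2}+\|f\|_{k/2}+\|h\|_k)$, with $C$ depending only on the ellipticity constant, $d$, and $k$ (critically not on $\|Q\|_\infty$). Writing $M:=\|u\|_{L^\infty(Q(a_0,b_0))}$, the elementary bound $(w\rho)^\alpha\le M^{\alpha-1}(w\rho)$ together with $\int_{\CR^d}w\rho\,dx\le c_1\zeta_1(s)$ produces, via Hypothesis \ref{hyp2}(2): from (i), $\|\tilde u\|_{\infty,2}^2\le c_1M\sup_s\zeta_1(s)$; from (iii), (iv), and $\|\chi'\|_\infty\le k/(b_0-b)$, $\|f\|_{k/2}\le CM^{1-2/k}[kc_1^{2/k}/(b_0-b)+c_3+c_4]\bigl(\int_{a_0}^{b_0}\zeta_1\,ds\bigr)^{2/k}$; and from (ii), (v), (vi), splitting the $Q\nabla w/w$-part of $h$ twice --- once via $|Q\nabla w|^k\le c_2^k w^{k-1}W_1$ and once via $W_1\le W_2$ --- $\|h\|_k\le CM^{1-1/k}[c_2(\int\zeta_1)^{1/k}+(c_2+c_5+c_6^{1/k})(\int\zeta_2)^{1/k}]$. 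Young's inequality $M^\alpha A\le\eps M+C_\eps A^{1/(1-\alpha)}$ with $\alpha\in\{1/2,\,1-2/k,\,1-1/k\}$ then absorbs every power of $M$ on the right; a second round of Young's applied to the squared form of the estimate produces the quadratic $(\int\zeta_i)^2$ terms, and a third application with a low exponent yields the sublinear $(\int\zeta_i)^{4/k}$ corrections. Gathering everything gives \eqref{eq.final} on $(a,b)\times\CR^d$. The main obstacles are the $\mathscr{H}^{p,1}$-regularity of $\tilde u$ (requiring the truncation step) and the combinatorial bookkeeping in the final Young step needed to match the precise coefficients $c_i^k$, $c_i^{2k}$ and the $k^k/(b_0-b)^k$ factor of \eqref{eq.final}.
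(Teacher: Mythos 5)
Your overall route is the paper's: a time cutoff of $w\rho$, a weak identity obtained by testing Lemma \ref{l.differential} with (cutoff)$\,\times\,w\,\times\,$test function, an application of Theorem \ref{t.auxiliary} with $r=k$, and Lyapunov-weighted norm estimates followed by an absorption argument (the paper resolves the resulting inequality $X^k\le\tilde\alpha+\tilde\beta X^{k-2}$, $X=\|w\tilde\rho\|_\infty^{1/(2k)}$, by a root-location argument rather than by Young's inequality, but that difference is immaterial).

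The genuine gap is the treatment of the unbounded weight $w$. First, $w_N:=w\wedge N$ is only Lipschitz, whereas the weight enters through $\A_0w$, $\partial_sw$, $Q\nabla w$ and through the $C^{1,2}$ test function built from it, so a non-smooth truncation cannot be used. Second, and more importantly, the truncation is not merely a device to secure $\tilde u\in\mathscr{H}^{p,1}$: the a priori finiteness of $M=\|w\rho\|_{L^\infty(Q(a_0,b_0))}$, which your Young-absorption step uses on both sides of the estimate, is exactly what the theorem is supposed to prove and is not available before $w$ is replaced by a bounded weight. Hence the whole argument (weak identity, Theorem \ref{t.auxiliary}, absorption) must be run for the truncated weight, with constants independent of the truncation parameter, and only then may one pass to the limit. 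This forces a re-verification of Hypothesis \ref{hyp2}(2) for the truncated weight; the paper does it with $w_\eps=w/(1+\eps w)$ and finds that (i), (ii), (iv) survive with the same constants but (iii) does not: $\A_0w_\eps$ picks up the extra term $-2\eps(1+\eps w)^{-3}\langle Q\nabla w,\nabla w\rangle$, so $c_3$ must be replaced by $c_3+2\eta^{-1}c_2^2$. Any smooth truncation produces an analogous second-derivative correction, and this is precisely the origin of the $c_2^{2k}$ and $c_2^4$ terms in \eqref{eq.final}, which your bookkeeping never produces; conversely, without tracking this modification your ``uniform-in-$N$ bounds'' are unjustified. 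Two smaller slips: your estimate of $\|f\|_{k/2}$ omits the $F\cdot\nabla w$ part of $\A w$, which is controlled via (ii) and (vi) by $\eta^{-1}c_2c_6^{1/k}M^{1-2/k}\big(\int_{a_0}^{b_0}\zeta_2\,ds\big)^{2/k}$ and is responsible for the $c_2^kc_6\big(\int\zeta_2\big)^2$ and $c_2^2c_6^{2/k}\big(\int\zeta_2\big)^{4/k}$ terms of \eqref{eq.final}; and cutting off near $a_0$ as well is both unnecessary (Theorem \ref{t.auxiliary} only requires $u(b_0)\equiv 0$; the left endpoint is handled through $\|u\|_{\infty,2}$) and harmful, since it would bring a factor $(a-a_0)^{-1}$ into the constant, which the stated estimate does not contain.
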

\begin{proof}
We first prove estimate \eqref{eq.final},  assuming additionally that $w$, along with its first order partial derivatives, is bounded.
To that end, observe that
\begin{align*}
\Gamma (k,a_0,b_0) &=  \int_{Q(a_0,b_0)} |F(s,x)|^k \rho(s,x)\, ds \, dx\\
&\leq \int_{Q(a_0,b_0)} w(s,x) |F(s,x)|^k \rho(s,x)\, ds \, dx\\
&\leq  c_6\int_{Q(a_0,b_0)} W_2(s,x)\rho(s,x)\, ds \, dx\\
& =  c_6 \int_{a_0}^{b_0} \zeta_2(s) \, ds <\infty\,,
\end{align*}
as a consequence of Proposition \ref{p.lyapunov}. Since the diffusion coefficients are bounded, it follows from Proposition \ref{p.hest}
and Corollary \ref{c.boundedness} that $\rho \in \mathscr{H}^{p,1}(Q(a_0,b_0))$ for some $p>d+2$ and that $\rho \in L^\infty (Q(a,b))$.

Let now $\vartheta$ be a smooth function with $\vartheta (s) = 1$ for $s \in [a,b]$ and $\vartheta (s) = 0$ for $s \geq b_0$, with
$|\vartheta'| \leq 2(b_0-b)^{-1}$. Moreover, let $\psi \in C^{1,2}_c(Q(a_0,b_0))$ and put $\varphi (s,x) := \vartheta (s)^\frac{k}{2}w(s,x)\psi (s,x)$.
By Lemma \ref{l.differential} we have
\begin{equation}\label{eq.start}
\int_{Q(a_0,b_0)}\big[ \partial_s\varphi (s,x) -\A (s)\varphi (s,x)\big]\rho(s,x)\, ds \, dx  = 0\, .
\end{equation}
We write $\tilde\rho := \vartheta^\frac{k}{2}\rho$. Observe that, since $w$ and its derivatives are bounded, $w\tilde\rho$ belongs to $W^{0,1}_p(Q(a_0,b_0))$ for some $p>d+2$. Using approximation and integration by parts, we see that $w\tilde\rho \in \mathscr{H}^{p,1}(Q(a_0,b_0))$and
\begin{eqnarray*}
\int_{Q(a_0,b_0)}\partial_s\psi \big( w\tilde\rho\big)\,ds\,dx=-\int_{Q(a_0,b_0)}\psi \partial_s\big( w\tilde\rho\big)\,ds\,dx\,.
\end{eqnarray*}
Thus,
\begin{align*}
\int_{Q(a_0,b_0)}\partial_s\big(\vartheta^{\frac{k}{2}}w\psi\big)\rho \,ds\,dx
& =  \frac{k}{2}\int_{Q(a_0,b_0)}\vartheta^{\frac{k-2}{2}}\vartheta'w\psi\rho\,ds\,dx
+\int_{Q(a_0,b_0)}(\partial_sw)\psi\tilde\rho\,ds\,dx\\
&\quad -\int_{Q(a_0,b_0)}\psi \partial_s\big( w\tilde\rho\big)\,ds\,dx\,.
\end{align*}
Moreover, since
\begin{eqnarray*}
\A(\psi w)= \A(w)\psi  +
w \langle F,\nabla\psi\rangle
+ \A_0(\psi)w + 2\langle Q\nabla \psi,\nabla w\rangle
\end{eqnarray*}
(where, as usual, $\A_0$ denotes the leading part of the operator $\A$),
integrating by parts the term $\A_0(\psi)w$ we finally get
\begin{align*}
& \phantom{=} \int_{Q(a_0,b_0)}\A(\vartheta^{\frac{k}{2}}\psi w)\rho\,ds\,dx=
\int_{Q(a_0,b_0)}\A(\psi w)\tilde\rho\,ds\,dx\\
&=\int_{Q(a_0,b_0)}\A(w)\psi\tilde\rho\,ds\,dx
+\int_{Q(a_0,b_0)}\langle F,\nabla \psi\rangle\,w\tilde\rho\,ds\,dx\\
&\qquad+2\int_{Q(a_0,b_0)}\langle Q\nabla w,\nabla \psi\rangle \tilde\rho\,ds\,dx
-\int_{Q(a_0,b_0)}\bigg (\sum_{i,j=1}^dD_iq_{ij}D_j\psi\bigg )w\tilde\rho\,ds\,dx\\
&\qquad -\int_{Q(a_0,b_0)}\langle Q\nabla (w\tilde\rho),\nabla\psi\rangle\,ds\,dx\,.
\end{align*}
Inserting these expressions into \eqref{eq.start} and rearranging, we find
\begin{align*}
&  \phantom{=}\int_{Q(a_0,b_0)}\big [\langle Q\nabla (w\tilde{\rho}), \nabla \psi\rangle - \psi\partial_s (w\tilde{\rho})\big ] \, ds\,dx\\
& =  \int_{Q(a_0,b_0)} \tilde{\rho} \bigg( 2 \sum_{i,j=1}^d q_{ij}(\partial_iw)(\partial_j \psi) - \sum_{i,j=1}^d w (D_iq_{ij})(D_j\psi)+ w \langle F,\nabla \psi\rangle\bigg) \, ds\,dx\\
& \qquad-\frac{k}{2}\int_{Q(a_0,b_0)}\rho w\psi \vartheta^{\frac{k-2}{k}}\vartheta'\, ds\,dx\\
& \qquad + \int_{Q(a_0,b_0)} \tilde{\rho} (\psi\A_0 w+\psi\langle F,\nabla w\rangle - \psi \partial_s w)\, ds\,dx\,.
\end{align*}
We may thus invoke Theorem \ref{t.auxiliary} and obtain
\begin{equation}\label{eq.inftyest}
\begin{aligned}
\|w\tilde{\rho} \|_{\infty}\leq
C \bigg( &\|w\tilde{\rho}\|_{\infty,2} +
\|\tilde{\rho} Q\nabla w\|_{k} +\|w\tilde{\rho} F\|_{k}
+ \sum_{j=1}^d\bigg\|w\tilde{\rho}\sum_{i=1}^d D_iq_{ij}\bigg\|_{k}\\
&+  \frac{k}{b_0-b}\|w\rho\vartheta^{\frac{k-2}{k}}\|_{\frac{k}{2}}
+ \|\tilde{\rho}\A_0w \|_{\frac{k}{2}}
  + \|\tilde{\rho}\partial_sw \|_{\frac{k}{2}}
+ \|\tilde{\rho}F\cdot\nabla w \|_{\frac{k}{2}}  \bigg)\,,
\end{aligned}
\end{equation}
where $C$ is a constant depending only on $k, d$ and $\eta$.
Next, observe that
\begin{align*}
\|\tilde{\rho} Q\nabla w\|_{k} &=
\bigg( \int_{Q(a_0,b_0)} |\tilde{\rho}Q\nabla w|^k \, ds \,dx \bigg)^\frac{1}{k}\\
&\leq  c_2\bigg( \int_{Q(a_0,b_0)} \tilde{\rho}^k w^{k-1} W_1\, ds \,dx \bigg)^\frac{1}{k}\\
&\leq  c_2 \|w\tilde{\rho}\|_{\infty}^{\frac{k-1}{k}} \bigg( \int_{Q(a_0,b_0)} \tilde{\rho} W_1\, ds \,dx \bigg)^\frac{1}{k}\\
&\leq  c_2 \|w\tilde{\rho}\|_{\infty}^{\frac{k-1}{k}} \bigg( \int_{a_0}^{b_0} \zeta_1(s) \, ds \bigg)^\frac{1}{k}\,.
\end{align*}
Similarly, one sees that
\begin{align*}
\|w\tilde{\rho} F\|_{k} &\leq
 c_6^\frac{1}{k} \|w\tilde{\rho}\|_{\infty}^\frac{k-1}{k} \bigg( \int_{a_0}^{b_0} \zeta_2(s) \, ds \bigg)^\frac{1}{k},\\
 \bigg\|w\tilde{\rho} \sum_{i=1}^dD_iq_{ij}\bigg\|_{k}
 &\leq   c_5 \|w\tilde{\rho}\|_{\infty}^\frac{k-1}{k} \bigg( \int_{a_0}^{b_0} \zeta_2(s) \, ds \bigg)^\frac{1}{k},\\
 \|w\tilde\rho\vartheta^{\frac{k-2}{k}}\|_{\frac{k}{2}}  &\leq
 c_1^\frac{2}{k} \|w\tilde{\rho}\|_{\infty}^\frac{k-2}{k} \bigg( \int_{a_0}^{b_0} \zeta_1(s) \, ds \bigg)^\frac{2}{k},\\
 \|\tilde{\rho}\A_0w \|_{\frac{k}{2}} &\leq
 c_3 \|w\tilde{\rho}\|_{\infty}^\frac{k-2}{k} \bigg( \int_{a_0}^{b_0} \zeta_1(s) \, ds \bigg)^\frac{2}{k},\\
 \|\tilde{\rho}\partial_sw \|_{\frac{k}{2}}  &\leq
  c_4 \|w\tilde{\rho}\|_{\infty}^\frac{k-2}{k} \bigg( \int_{a_0}^{b_0} \zeta_1(s) \, ds \bigg)^\frac{2}{k},\\
  \|\tilde{\rho}(F\cdot\nabla w) \|_{\frac{k}{2}} &\leq
  \eta^{-1}c_2c_6^\frac{1}{k} \|w\tilde{\rho}\|_{\infty}^\frac{k-2}{k} \bigg( \int_{a_0}^{b_0} \zeta_2(s) \, ds \bigg)^\frac{2}{k}\,.
\end{align*}
Moreover, we have the estimate
\begin{align*}
\|w\tilde{\rho}\|_{\infty, 2} &\leq \|w\tilde{\rho}\|_{\infty}^\half \sup_{s \in (a_0,b_0)} \bigg(\int_{\CR^d}
\tilde\rho(s,x)w(s,x)\, dx\bigg)^\half\\
&\leq c_1^\half\|w\tilde{\rho}\|_{\infty}^\half \bigg (\sup_{s \in (a_0,b_0)} \zeta_1(s)\bigg )^{\frac{1}{2}}\,.
\end{align*}

Writing $X = \|w\tilde{\rho}\|_{\infty}^{\frac{1}{2k}}$ and inserting the above bounds back  into \eqref{eq.inftyest}, we obtain
\begin{equation}\label{eq.z}
X^k \leq \alpha + \beta X^{k-2} + \gamma X^{k-4}\,,
\end{equation}
where
\begin{eqnarray*}
\alpha = Cc_1^\half \bigg (\sup_{s \in (a_0,b_0)}\zeta_1(s)\bigg )^\half,\qquad\;\,
\beta = C(c_2+ c_5d + c_6^\frac{1}{k} ) \bigg( \int_{a_0}^{b_0} \zeta_2(s)\, ds \bigg)^\frac{1}{k}
\end{eqnarray*}
and
\begin{eqnarray*}
\gamma = C\bigg( \frac{kc_1^\frac{2}{k}}{b_0-b} +c_3+ c_4 \bigg) \bigg( \int_{a_0}^{b_0} \zeta_1(s)\, ds \bigg)^\frac{2}{k}
+C\eta^{-1}c_2c_6^\frac{1}{k} \bigg( \int_{a_0}^{b_0} \zeta_2(s)\, ds \bigg)^\frac{2}{k}\, .
\end{eqnarray*}
Young's inequality yields
\begin{eqnarray*}
X^{k-4} \leq \frac{k-4}{k-2}X^{k-2} + \frac{2}{k-2}\,,
\end{eqnarray*}
which, combined with \eqref{eq.z}, implies
\begin{equation}\label{eq.superz}
X^k \leq \tilde{\alpha} +\tilde{\beta}X^{k-2}
\end{equation}
where
\begin{eqnarray*}
\tilde{\alpha} = \alpha + \frac{2\gamma}{k-2}\quad\mbox{and}\quad \tilde{\beta} = \beta +\frac{k-4}{k-2}\gamma\, .
\end{eqnarray*}
We claim that \eqref{eq.superz} implies that $X \leq \tilde{\beta}^\half + \tilde{\alpha}^\frac{1}{k}$. To see this, we observe that
 $p(X) :=
X^{k-2}(X^2-\tilde{\beta})$ is a polynomial with zeros $0$ and $\pm\tilde{\beta}^\half$ which is increasing on
$(\tilde{\beta}^\half, \infty)$.
Moreover,
\begin{eqnarray*}
p(\tilde{\beta}^\half +\tilde{\alpha}^\frac{1}{k}) = \big(\tilde{\beta}^\half + \tilde{\alpha}^\frac{1}{k}\big)^{k-2}\big((\tilde{\beta}^\half +\tilde{\alpha}^\frac{1}{k})^2 - \tilde{\beta}\big) \geq \tilde{\alpha}^\frac{k-2}{k}(
\tilde{\beta} + \tilde{\alpha}^\frac{2}{k} -\tilde{\beta})
= \tilde{\alpha}\,,
\end{eqnarray*}
whence $X \geq \tilde{\beta}^\half +\tilde{\alpha}^\frac{1}{k}$ implies $p(X) \geq \tilde{\alpha}$. As \eqref{eq.superz} implies $p(X) \leq \tilde{\alpha}$, it follows that $X \leq \tilde{\beta}^\half + \tilde{\alpha}^\frac{1}{k}$ as claimed.

By the definition of $X, \tilde{\beta}$ and $\tilde{\alpha}$, we have proved that
\begin{eqnarray*}
\|w\tilde\rho\|_{\infty} \leq 2^{2k-1} (\tilde{\beta}^{k} + \tilde{\alpha}^2) \leq 2^{2k-1} \big( 2^{k-1}(\beta^{k} + \gamma^{k}) + 2(\alpha^2 + \gamma^2)\big)\, .
\end{eqnarray*}
Taking into account the definitions of $\alpha, \beta$ and $\gamma$, we get \eqref{eq.final} for a certain constant $C$, possibly different from $C$ above but only depending on $d, k$ and $\eta$, and with $c_2^{2k}$ and $c_2^4$ being replaced by zero. In particular, $C$ does not depend on $w$.\medskip

We now remove the additional assumption that $w$ is bounded by using Hypothesis \ref{hyp2}(1). Given $w$ as in the statement of the theorem, we
define, for $\eps>0$, $w_\eps := \frac{w}{1+\eps w}$. Then $w_\eps$ is bounded. Moreover, $\partial_s w_\eps
= (1+ \eps w)^{-2}\partial_sw$ and $\nabla w_\eps
= (1+ \eps w)^{-2}\nabla w$ are bounded since $w^{-2}\partial_sw$ and $w^{-2}\nabla w$ are.
This shows that Hypothesis \ref{hyp2}(1) holds for $w_\eps$.
We claim that $w_\eps$ also satisfies Hypothesis
\ref{hyp2}(2) with the same constants $c_j$, $j\neq 3$, and with $c_3$ being replaced by $\tilde{c}_3:=c_3+2\eta^{-1}c_2^2$.
We limit ourselves to checking condition (iii) in Hypothesis \ref{hyp2}(2) since the other conditions are straightforward. For this purpose, we observe that
\begin{align*}
\A_0w_{\eps}=(1+\eps w)^{-2}\A_0w-2\eps(1+\eps w)^{-3}\langle Q\nabla w,\nabla w\rangle\,,
\end{align*}
so that
\begin{align*}
|\A_0w_{\eps}| &\le (1+\eps w)^{-2}|\A_0w|+2\eps(1+\eps w)^{-3}|Q^{1/2}\nabla w|^2\\
&\le (1+\eps w)^{-2}|\A_0w|+2\eta^{-1}\eps(1+\eps w)^{-3}|Q\nabla w|^2\,.
\end{align*}
Using the conditions (ii) and (iii) for $w$ we get
\begin{align*}
|\A_0w_{\eps}|& \le c_3(1+\eps w)^{-2}w^{\frac{k-2}{k}}W_1^{\frac{2}{k}}+
2\eps\eta^{-1}(1+\eps w)^{-3}c_2^2w^{\frac{2k-2}{k}}W_1^{\frac{2}{k}}\\
&\le (c_3+2\eta^{-1}c_2^2)w_{\eps}^{\frac{k-2}{k}}W_1^{\frac{2}{k}}\,,
\end{align*}
which proves condition (iii).

The first part of the proof thus yields that \eqref{eq.final} holds if we replace $w$ with $w_\eps$ and $c_3$ with $\tilde{c}_3$. Note that the constant $C$ does not
depend on $\eps$. We may let $\eps \to 0$ and obtain \eqref{eq.final} for the original $w$.
\end{proof}

We next turn to the case of possibly unbounded diffusion coefficients. We extend Theorem \ref{main-bounded} to this general situation by using the approximating operators from Section \ref{subsect.approx}. We first make the following observation.

\begin{lem}\label{l.localize}
Assume Hypothesis \ref{hyp2} and let the coefficients $Q_n$ be given via equation \eqref{eq.qn}, where we pick $W=W_1$.
We define the operators $\A_n$ by \eqref{eq.an}. Then  the operators $\A_n$ satisfy Hypothesis \ref{hyp2}(1), (2)
with coefficients $\tilde{c}_1, \ldots, \tilde{c}_6$, given by $\tilde{c}_i = c_i$ for $i =1,4,6$ and
\begin{equation}\label{eq.ctilde}
\tilde{c}_2 :=2c_2, \quad \tilde{c}_3 := c_3 + \eta c_7, \quad \mbox{and}\quad \tilde{c}_5 := c_5 +4c_1c_8\, .
\end{equation}
\end{lem}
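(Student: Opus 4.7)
\medskip

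The plan is to verify each piece of Hypothesis \ref{hyp2}(1)--(2) for the approximating operator $\A_n$. Hypothesis \ref{hyp2}(1) is a statement about the weight $w$ alone and has nothing to do with the operator, so it transfers for free. The same is true of conditions (i), (iv), (vi) of Hypothesis \ref{hyp2}(2), since (i) and (iv) do not involve $Q$ and (vi) only involves the drift $F$, which is untouched by the approximation. Thus we may take $\tilde c_1=c_1$, $\tilde c_4=c_4$ and $\tilde c_6=c_6$, and the real work is to check (ii), (iii), (v) with the diffusion matrix $Q_n=\varphi_n Q+(1-\varphi_n)\eta I$.

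For (ii) I would write $Q_n\nabla w=\varphi_n Q\nabla w+(1-\varphi_n)\eta\nabla w$ and estimate each term separately. The first is controlled by the original assumption. For the second, the uniform ellipticity of $Q$ gives $\eta|\nabla w|^2\le\langle Q\nabla w,\nabla w\rangle\le|\nabla w|\,|Q\nabla w|$, so $\eta|\nabla w|\le|Q\nabla w|\le c_2w^{(k-1)/k}W_1^{1/k}$. Adding the two contributions gives $|Q_n\nabla w|\le 2c_2w^{(k-1)/k}W_1^{1/k}$, i.e.\ $\tilde c_2=2c_2$.

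For (iii), since $\sum_{ij}\delta_{ij}D_{ij}w=\Delta w$, the splitting $\A_{0,n}w=\varphi_n\A_0 w+(1-\varphi_n)\eta\Delta w$ combined with the hypothesis and the extra bound $|\Delta w|\le c_7 w^{(k-2)/k}W_1^{2/k}$ from Hypothesis \ref{hyp2}(3) immediately yields $|\A_{0,n}w|\le(c_3+\eta c_7)w^{(k-2)/k}W_1^{2/k}$.

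The main obstacle is (v), because it requires handling the derivatives of the cut-off $\varphi_n$. Differentiating the definition of $q_{ij}^{(n)}$ gives
\begin{equation*}
\sum_{i=1}^d D_iq_{ij}^{(n)}=\sum_{i=1}^d(D_i\varphi_n)q_{ij}+\varphi_n\sum_{i=1}^d D_iq_{ij}-\eta D_j\varphi_n,
\end{equation*}
and the first and third terms must be bounded by something of the form $c\,w^{-1/k}W_2^{1/k}$. Writing $D_i\varphi_n=n^{-1}\varphi'(W_1/n)D_iW_1$, we can package the first term as $n^{-1}\varphi'(W_1/n)(Q\nabla W_1)_j$; the condition $|t\varphi'(t)|\le 2$ then gives $n^{-1}|\varphi'(W_1/n)|\le 2/W_1$ on the support of $\varphi_n'$. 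Invoking Hypothesis \ref{hyp2}(3) in the form $|Q\nabla W_1|\le c_8w^{(k-1)/k}W_2^{1/k}$ and using $w\le c_1W_1$ (Hypothesis \ref{hyp2}(2)(i)) to replace $W_1^{-1}$ by $c_1 w^{-1}$, one obtains $2c_1c_8\,w^{-1/k}W_2^{1/k}$ for the first term. For the third term, the same uniform-ellipticity trick as in (ii), $\eta|\nabla W_1|\le|Q\nabla W_1|$, lets us absorb $\eta|D_j\varphi_n|$ into the same bound $2c_1c_8\,w^{-1/k}W_2^{1/k}$. Combining with the middle term, which is handled directly by the hypothesis on $\sum_iD_iq_{ij}$, yields $|\sum_iD_iq_{ij}^{(n)}|\le(c_5+4c_1c_8)w^{-1/k}W_2^{1/k}$, giving $\tilde c_5=c_5+4c_1c_8$ and finishing the proof.
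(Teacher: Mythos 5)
Your argument is correct and follows essentially the same route as the paper: the same splitting of $Q_n$, $\A_0^n$ and $\sum_i D_iq_{ij}^{(n)}$, ellipticity used to control $\eta|\nabla w|$ and $\eta|\nabla W_1|$ by $|Q\nabla w|$ and $|Q\nabla W_1|$, and the bound $|t\varphi'(t)|\le 2$ together with $w\le c_1W_1$ to handle the cut-off terms, yielding exactly the constants $2c_2$, $c_3+\eta c_7$ and $c_5+4c_1c_8$. The only (immaterial) difference is that the paper multiplies the $c_5$-estimate through by $w$ before bounding, whereas you trade $W_1^{-1}$ for $c_1w^{-1}$ directly.
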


\begin{proof}
The estimates involving $c_1,c_4$ and $c_6$ obviously hold when $\A$ is replaced with $\A_n$. Let us now note that
\begin{eqnarray*}
|\nabla w| = |Q^{-1}Q \nabla w| \leq \eta^{-1} c_2w^\frac{k-1}{k}W_1^\frac{1}{k}\,.
\end{eqnarray*}
Similarly, we see that $|\nabla W_1| \leq \eta^{-1}c_8w^\frac{k-1}{k}W_2^\frac{1}{k}$.
It follows that
\begin{eqnarray*}
|Q_n\nabla w| =|\varphi_nQ\nabla w + (1-\varphi_n)\eta \nabla w| \leq |Q\nabla w| + \eta |\nabla w| \leq
2c_2w^\frac{k-1}{k}W_1^\frac{1}{k}\, .
\end{eqnarray*}
Moreover,
\begin{eqnarray*}
|\A_0^nw| \leq |\A_0w| + \eta |\Delta w| \leq (c_3+\eta c_7) w^\frac{k-2}{k}W_1^\frac{2}{k}\, .
\end{eqnarray*}
Finally, we have
\begin{eqnarray*}
\sum_{i=1}^dD_iq_{ij}^n = \varphi_n \sum_{i=1}^dD_iq_{ij} + \frac{1}{n} \varphi'\big(W_1/n\big )[(Q\nabla W_1)_j-\eta D_jW_1]\,.
\end{eqnarray*}
Noting that
\begin{eqnarray*}
\begin{aligned}
\bigg|\frac{w}{n}\varphi'\big(W_1/n\big)[(Q\nabla W_1)_j-\eta D_jW_1]\bigg|
 & \leq c_1\bigg|\frac{W_1}{n}\varphi' \big(W_1/n\big)\bigg| \big(|Q\nabla W_1|+\eta|\nabla W_1|\big)\\
 & \leq 2c_1 \big(|Q\nabla W_1|+\eta|\nabla W_1|\big)\,,
 \end{aligned}
\end{eqnarray*}
since $|x\varphi'(x)| \leq 2$ for any $x\in\CR$, it follows that
\begin{eqnarray*}
\bigg|w \sum_{i=1}^d D_iq_{ij}^n\bigg| \leq \bigg|w \sum_{i=1}^d D_iq_{ij}\bigg|+
2c_1 \big(|Q\nabla W_1|+\eta|\nabla W_1|\big)
\leq (c_5+4c_1c_8)w^\frac{k-1}{k}W_2^\frac{1}{k}\,,
\end{eqnarray*}
finishing the proof.
\end{proof}

We can now prove the main result of this section.

\begin{thm}\label{t.pointwise}
Let Hypothesis \ref{hyp2} be satisfied. Then there exists a positive constant $C$, depending only on $\eta$ and $d$, such that
\begin{align}
 w\rho
 \leq  C \bigg [  c_1 & \sup_{s \in (a_0,b_0)} \zeta_1(s)
+ (c_1^kc_8^k+c_2^{k} + c_5^{k}+ c_6 ) \int_{a_0}^{b_0} \zeta_2(s)\, ds\notag\\
& +\bigg( \frac{k^kc_1^2}{(b_0-b)^{k}}+c_2^{2k}+c_3^{k} + c_4^{k}+c_7^k\bigg) \bigg( \int_{a_0}^{b_0} \zeta_1(s)\, ds \bigg)^2\notag\\
&+c_2^{k}c_6\bigg( \int_{a_0}^{b_0} \zeta_2(s)\, ds \bigg)^2
 +c_2^2c_6^\frac{2}{k}\bigg( \int_{a_0}^{b_0} \zeta_2(s)\, ds \bigg)^\frac{4}{k}
\notag\\
& +
\bigg( \frac{k^2c_1^\frac{4}{k}}{(b_0-b)^{2}}+c_2^4 +c_3^2+c_4^{2}+c_7^2\bigg) \bigg( \int_{a_0}^{b_0} \zeta_1(s)\, ds \bigg)^\frac{4}{k} \bigg]
\label{eq-final-thm}
\end{align}
in $(a,b)\times\CR^d$.
\end{thm}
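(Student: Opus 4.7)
The plan is to reduce Theorem~\ref{t.pointwise} to Theorem~\ref{main-bounded} through the approximation of Section~\ref{subsect.approx}. Using $W=W_1$ as the cutoff, I would construct the operators $\A_n$ of \eqref{eq.an}. By Lemma~\ref{l.approx} each $\A_n$ has bounded diffusion coefficients with bounded first order spatial derivatives, satisfies Hypothesis~\ref{hyp1} on $[0,t]$, and admits the same Lyapunov pair $(W_1,W_2)$ with the same $h$. Lemma~\ref{l.localize} -- which is where Hypothesis~\ref{hyp2}(3) finally gets used -- then shows that $\A_n$ satisfies Hypothesis~\ref{hyp2}(1)--(2) with modified constants $\tilde c_1,\dots,\tilde c_6$ from \eqref{eq.ctilde}, namely $\tilde c_i=c_i$ for $i=1,4,6$, together with $\tilde c_2=2c_2$, $\tilde c_3=c_3+\eta c_7$, $\tilde c_5=c_5+4c_1c_8$.

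Setting $\mu_t^n:=\mu_t$, I would introduce the evolution system of measures $\mu_s^n:=G_n(t,s)^*\mu_t$ together with its density $\rho_n$ on $Q(0,t)$. Applying Theorem~\ref{main-bounded} to $\A_n$ then yields, on $(a,b)\times\CR^d$, an estimate of exactly the form \eqref{eq.final} but with each $c_i$ (resp.\ $\zeta_i$) replaced by $\tilde c_i$ (resp.\ $\zeta_i^n(s):=\int_{\CR^d}W_i(s)\,d\mu_s^n$). The crucial feature is that the constant there depends only on $\eta$ and $d$, hence is independent of $n$. Expanding $\tilde c_3^j=(c_3+\eta c_7)^j$ and $\tilde c_5^j=(c_5+4c_1c_8)^j$ via $(a+b)^j\le 2^{j-1}(a^j+b^j)$, and absorbing the factor $(2c_2)^j$ coming from $\tilde c_2$ into the constant, reproduces exactly the extra $c_7^j$ and $c_1^jc_8^j$ contributions that distinguish \eqref{eq-final-thm} from \eqref{eq.final}.

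The final step is to let $n\to\infty$. By Proposition~\ref{p.conv} the densities $\rho_n$ converge to $\rho$ locally uniformly on $Q(0,t)$, so the left-hand side passes to the pointwise limit on $(a,b)\times\CR^d$. For the right-hand side I would use the weak convergence $\mu_s^n\weak\mu_s$ for every $s\in[0,t)$ (established inside the proof of Proposition~\ref{p.conv}) together with the domination $W_1\le W_2\le c_0V^{1-\sigma}$ from Hypothesis~\ref{hyp2} and the uniform-in-$n,s$ bound $\int V\,d\mu_s^n\le\int V\,d\mu_t+M(t-s)$ from the proof of Proposition~\ref{p.lyapunov}; by de~la~Vall\'ee~Poussin these together supply uniform integrability of $\{W_i\}$ against $\{\mu_s^n\}$, forcing $\zeta_i^n(s)\to\zeta_i(s)$ pointwise. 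The uniform domination $\zeta_i^n(s)\le e^{\int_s^t h}\zeta_i(t)$, again from Proposition~\ref{p.lyapunov}, then permits dominated convergence inside $\int_{a_0}^{b_0}\zeta_i^n\,ds$ and controls the $\sup_{s\in(a_0,b_0)}\zeta_1^n$ term, delivering \eqref{eq-final-thm}.

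The expected main obstacle is precisely this last step: weak convergence $\mu_s^n\weak\mu_s$ is by itself too weak to handle integrals of the unbounded Lyapunov functions $W_i$, and it is exactly the structural assumption $W_1\le W_2\le c_0V^{1-\sigma}$ built into Hypothesis~\ref{hyp2}, combined with the pre-existing $V$-Lyapunov control from Hypothesis~\ref{hyp1}(3), that supplies the uniform integrability needed to close the argument.
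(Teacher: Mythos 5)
Your overall strategy coincides with the paper's: approximate $\A$ by the operators $\A_n$ from \eqref{eq.an} with $W=W_1$, invoke Lemma \ref{l.localize} (where Hypothesis \ref{hyp2}(3) enters) to get Hypothesis \ref{hyp2}(1)--(2) for $\A_n$ with the modified constants \eqref{eq.ctilde}, apply Theorem \ref{main-bounded} with its $n$-independent constant to $\rho_n$ and $\zeta_{i,n}(s)=\int_{\CR^d}W_i(s)\,d\mu_s^n$, and then let $n\to\infty$ using Proposition \ref{p.conv} on the left-hand side. The bookkeeping with the constants (e.g.\ $\tilde c_3^{\,j}\le 2^{j-1}(c_3^j+\eta^jc_7^j)$, absorbing powers of $2$ and $\eta$ into $C$) is exactly what the paper does implicitly when it combines \eqref{eq.final} with \eqref{eq.ctilde}.

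There is, however, a genuine gap in your limiting step, namely in the term $c_1\sup_{s\in(a_0,b_0)}\zeta_{1,n}(s)$. What you establish is pointwise convergence $\zeta_{i,n}(s)\to\zeta_i(s)$ for each fixed $s$ (weak convergence of $\mu_s^n$ plus uniform integrability coming from $W_i\le c_0V^{1-\sigma}$ and \eqref{eq.vest}), together with the $n$-independent domination $\zeta_{i,n}(s)\le e^{\int_s^t h(\tau)\,d\tau}\,\zeta_i(t)$ from Proposition \ref{p.lyapunov}. This is enough for the integrals $\int_{a_0}^{b_0}\zeta_{i,n}(s)\,ds$ by dominated convergence, but it does not give $\limsup_{n}\sup_{s}\zeta_{1,n}(s)\le\sup_{s}\zeta_1(s)$: pointwise convergence only yields the opposite inequality $\sup_s\zeta_1(s)\le\liminf_n\sup_s\zeta_{1,n}(s)$, and the dominating bound only lets you replace the supremum by $e^{\int_{a_0}^t h(\tau)\,d\tau}\,\zeta_1(t)$, which in general exceeds $\sup_s\zeta_1(s)$ and would prove a weaker inequality than \eqref{eq-final-thm} (a moving-bump family dominated by a constant shows that pointwise convergence plus domination cannot control suprema). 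The paper closes precisely this point by proving that $\zeta_{i,n}\to\zeta_i$ \emph{uniformly} on $(a_0,b_0)$: it splits $|\zeta_{i,n}(s)-\zeta_i(s)|$ into the integral over $B_R$, controlled by the locally uniform convergence $\rho_n\to\rho$ of Proposition \ref{p.conv}, and the two tail integrals over $\CR^d\setminus B_R$, which are made small uniformly in $n$ and $s$ via H\"older's inequality with $W_i\le c_0V^{1-\sigma}$, the bound \eqref{eq.vest} applied to $\mu_s^n$, and the resulting uniform tightness of $\{\mu_s^n\}$ and $\{\mu_s\}$. You already have every ingredient of this argument; the missing step is to run it uniformly in $s$ rather than for each fixed $s$, so that the supremum term passes to the limit as stated.
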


\begin{proof}
To approximate the coefficients $\A$, we use part (3) in Hypothesis \ref{hyp2}.
More precisely, for $n\in\CN$, we consider the operator $\A_n(t)$ defined in \eqref{eq.an}. By virtue of Lemma \ref{l.localize}
each operator $\A_n(t)$ has bounded diffusion coefficients and it satisfies Hypothesis \ref{hyp2}(1) and (2) with the same constants $c_j$ except
for $c_2$, $c_3$ and $c_5$ which are now replaced by $2c_2$, $c_3+\eta c_7$ and $c_5+4c_1c_8$, respectively. Take $\mu_s^n=G_n(t,s)^\ast \mu_t,\,s\in [0,t]$ for a fixed $t\in (0,1]$, where $G_n(t,s)$ is the evolution family corresponding to $\A_n(t)$. Combining \eqref{eq.final}
with \eqref{eq.ctilde} it follows that estimate \eqref{eq-final-thm} holds true with $\rho$ replaced with $\rho_n$ and $\zeta_i$ replaced by $\zeta_{i,n}$ defined by $\zeta_{i,n}(s)=\int_{\CR^d}W_i(s)\,d\mu_s^n$. Moreover, the constant $C$ is independent of $n$.

To finish the proof, we let $n$ tend to $\infty$.
To prove the convergence of the terms $\sup_{s\in (a_0,b_0)}\zeta_{1,n}(s)$ and $\int_{a_0}^{b_0}\zeta_{i,n}(s)ds$ to
$\sup_{s\in (a_0,b_0)}\zeta_1(s)$ and $\int_{a_0}^{b_0}\zeta_i(s)ds$, respectively, it suffices to prove that
$\zeta_{i,n}$ tends to $\zeta_i$ uniformly in $(a_0,b_0)$ as $n\to \infty$.

To that end, first note that $w\rho_n$ converges locally uniformly to
$w\rho$ by Proposition \ref{p.conv}. Using the estimate $W_i\le c_0V^{1-\sigma}$ and H\"older's inequality, we can estimate
\begin{align}
 |\zeta_{i,n}(s)-\zeta_i(s)|
& \le  \int_{B_R}W_i(s)|\rho_n(s)-\rho(s)|\,dx\notag\\
&\quad +\int_{\CR^d\setminus B_R}W_i(s)\,d\mu_s^n+\int_{\CR^d\setminus B_R}W_i(s)\,d\mu_s^n\notag\\
&\le \|W_i\|_{L^{\infty}((a_0,b_0)\times B_R)}\|\rho_n-\rho\|_{L^{\infty}((a_0,b_0)\times B_R)}\omega_d R^d\notag\\
&\quad+c_0\left (\int_{\CR^d\setminus B_R}V\,d\mu_s^n\right )^{1-\sigma}(\mu_s^n(\CR^d\setminus B_R))^{\sigma}\notag\\
&\quad +c_0\left (\int_{\CR^d\setminus B_R}V\,d\mu_s\right )^{1-\sigma}(\mu_s(\CR^d\setminus B_R))^{\sigma}\,,
\label{ultimaa-bis}
\end{align}
for any $s\in (a_0,b_0)$, $n\in\CN$ and $i=1,2$, where $\omega_d$ denotes the measure of the ball $B_1$. Using equation\eqref{eq.vest}, which holds true also with the measure $\mu_s$ being replaced with $\mu_s^n$ since $V$ is a Lyapunov function also for the operator $\mathscr A_n$, we can easily deduce that the family of measures $\{\mu_s^n: s\in (a_0,b_0),\, n\in\CN\}$
and $\{\mu_s: s\in (a_0,b_0),\, n\in\CN\}$ are tight. Therefore,
\begin{equation}
\lim_{R\to \infty}\sup_{s\in (a_0,b_0), n\in\CN}\mu_s^n(\CR^d\setminus B_R)=
\lim_{R\to \infty}\sup_{s\in (a_0,b_0)}\mu_s(\CR^d\setminus B_R)=0\,.
\label{ultimaa-ter}
\end{equation}
Further, estimate \eqref{eq.vest} shows also that
\begin{eqnarray*}
\int_{\CR^d\setminus B_R}V\,d\mu_s^n+\int_{\CR^d\setminus B_R}V\,d\mu_s\le 2\int_{\CR^d}V\,d\mu_t+2M(t-a_0)\,,
\end{eqnarray*}
for any $s\in (a_0,b_0)$.
Replacing this estimate back into \eqref{ultimaa-bis} we see that $\zeta_{i,n}$ converges to $\zeta_i$ uniformly in
$(a_0,b_0)$ as $n\to\infty$. Indeed, given $\eps>0$, we can first pick $R$ large enough, so that the last two terms in
\eqref{ultimaa-bis} are less than $\eps$ and then $n$ so large, that the first term is also less than $\eps$.
\end{proof}


\section{Pointwise estimates of the transition kernels}

We now make more specific assumptions on the structure of the operators $\A(t)$ in our parabolic equation and use Theorem \ref{t.pointwise}
to obtain estimates for the associated transition kernels. We make the following assumptions.

\begin{hyp}
\label{hyp3}
Assume that the coefficients $q_{ij}$ and $F_j$ ($i,j=1, \ldots, d$) satisfy parts (1) and (2) of Hypothesis \ref{hyp1}. Moreover, assume that
\begin{enumerate}
\item
there exist positive constants $\Lambda \geq 0$ and $m\geq 0$ such that
\begin{align*}
|Q(t,x)x| & \leq \Lambda (1+|x|^m)|x|\,,\\
\langle Q(t,x)\xi,\xi\rangle &\leq \Lambda (1+|x|^m)|\xi|^2\,,\\
|D_iq_{ij}(t,x)| & \leq \Lambda (1+|x|^m)\,,
\end{align*}
for all $(t,x)\in [0,1]\times\CR^d$, $\xi\in\CR^d$ and $i,j=1,\ldots,d$;
\item
there exist a nonnegative function $b:[0,1]\times\CR^d\to\CR$ and positive constants $p>\max\{m-1,1\}$, $\kappa$ and $K\ge 1$ such that
\begin{align}
|F(t,x)| &\le \Lambda |x|^p,&\quad \mbox{for all}\,\, (t,x)\in [0,1]\times\CR^d\,,\label{cond-B1}\\
\langle F(t,x),x\rangle & \le -b(t,x)|x|^{p+1},&\quad \mbox{for all}\,\, (t,x)\in [0,1]\times\CR^d\,,\label{cond-B2}\\
b(t,x) & \ge \kappa,&\quad \mbox{for all}\,\, (t,x)\in [0,1]\times (\CR^d\setminus B_K)\,.
\label{cond-B3}
\end{align}
\end{enumerate}
\end{hyp}

We show that assuming Hypothesis \ref{hyp3}, we are in the situation considered so far.

\begin{lem}\label{l.lyapunov}
Assume Hypothesis \ref{hyp3}, set $\beta:= p+1-m$ and let $\upsilon \in C^2(\CR^d)$ be such that
$\upsilon (x) = |x|^\beta$ for $|x|\geq 1$.
\begin{enumerate}
\item For $0<\delta < \kappa(\beta\Lambda)^{-1}$, part (3) of Hypothesis \ref{hyp1} is satisfied for the function $V$, defined by
$V(x) := \exp (\delta \upsilon (x))$.
\item Let $t \in (0,1]$ and $V$, $\delta$ be as in part (1). For $\eps < \delta$ and $\alpha> (p+1-m)/(p-1)$, the function $W: [0,t]\times \CR^d \to \CR$, defined by $W(s,x) := \exp (\eps (t-s)^\alpha \upsilon (x))$ is a Lyapunov function in the sense of Definition \ref{defn-2.6}.
\end{enumerate}
\end{lem}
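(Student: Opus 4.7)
The plan is to perform explicit pointwise computations of $\A(t)V$ and of $(\partial_s-\A(s))W$, bound the resulting expressions using the growth and dissipativity estimates in Hypothesis \ref{hyp3}, and then identify the window on the parameters in which the leading terms cancel favorably. The key structural ingredient is the \emph{radial dissipativity} of the drift: since $\nabla\upsilon(x)=\beta|x|^{\beta-2}x$ for $|x|\ge 1$, combining \eqref{cond-B2} with \eqref{cond-B3} yields
\[
\langle F(t,x),\nabla\upsilon(x)\rangle \le -\beta\kappa |x|^{2p-m}\qquad \text{for } |x|\ge K.
\]
This is the only strictly negative term at our disposal and must dominate all others. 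The diffusive contributions are estimated using Hypothesis \ref{hyp3}(1): $\langle Q\nabla\upsilon,\nabla\upsilon\rangle=O(|x|^{2p-m})$ (with leading constant $\beta^2\Lambda$) while $\mathrm{Tr}(QD^2\upsilon)=O(|x|^{p-1})$. We may assume $\upsilon\ge 0$ on $\CR^d$ by choosing the extension inside $B_1$ nonnegative.

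For part (1) one computes
\[
\frac{\A(t)V}{V}=\delta\bigl[\mathrm{Tr}(QD^2\upsilon)+\langle F,\nabla\upsilon\rangle\bigr]+\delta^2\langle Q\nabla\upsilon,\nabla\upsilon\rangle.
\]
On $\{|x|\ge K\}$ the leading power $|x|^{2p-m}$ carries the coefficient $\delta\beta(\delta\beta\Lambda-\kappa)$, strictly negative by $\delta<\kappa/(\beta\Lambda)$, while every other term is $O(|x|^{p-1})$ or smaller. Hence $\A(t)V\to -\infty$ uniformly in $t\in[0,1]$, which combined with continuity on the compact complement gives $\A(t)V\le M$ for some $M\ge 0$. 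The bound on $\eta\Delta V+F\cdot\nabla V$ is of the same nature: when $m>0$ the diffusive top-order term $\eta\delta^2\beta^2|x|^{2p-2m}$ is of strictly lower order than $|x|^{2p-m}$, while for $m=0$ the leading coefficient is $\delta\beta(\delta\beta\eta-\kappa)<0$ since $\eta\le\Lambda$.

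For part (2), properties (1) and (2) of Definition \ref{defn-2.6} are immediate: $W(s,x)\le V(x)$ holds because $\eps(t-s)^\alpha\le\eps<\delta$ and $\upsilon\ge 0$, while the uniform growth $W(s,\cdot)\to\infty$ on compact subintervals $[0,s_0]\subset[0,t)$ follows from $(t-s)^\alpha\ge(t-s_0)^\alpha>0$. The content is the differential inequality. A direct calculation gives
\[
\frac{\partial_s W-\A(s)W}{W}=-\eps\alpha(t-s)^{\alpha-1}\upsilon-\eps(t-s)^\alpha\bigl[\mathrm{Tr}(QD^2\upsilon)+\langle F,\nabla\upsilon\rangle\bigr]-\eps^2(t-s)^{2\alpha}\langle Q\nabla\upsilon,\nabla\upsilon\rangle.
\]
Inserting the bounds above and using $\eps<\kappa/(\beta\Lambda)$ together with $t-s\le 1$, the $\eps^2$-term is absorbed into half of the dissipation contribution, yielding on $\{|x|\ge K\}$
\[
\frac{\partial_s W-\A(s)W}{W}\ge \tfrac{1}{2}\eps\beta\kappa(t-s)^\alpha|x|^{2p-m}-\eps\alpha(t-s)^{\alpha-1}|x|^\beta-C\eps(t-s)^\alpha|x|^{p-1}.
\]

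The main obstacle is to absorb the two negative terms into a small fraction of the positive one while leaving an $L^1(0,t)$ remainder in $s$. This is the step that pins down the exponent $\alpha$ and is done by a weighted Young inequality with conjugate exponents $(2p-m)/\beta$ and $(2p-m)/(p-1)$ (note $\beta+(p-1)=2p-m$): for any $\theta>0$,
\[
(t-s)^{\alpha-1}|x|^\beta \le \theta(t-s)^\alpha|x|^{2p-m}+C_\theta(t-s)^{\alpha-1-\beta/(p-1)}.
\]
The term $(t-s)^\alpha|x|^{p-1}$ is handled more crudely by $|x|^{p-1}\le 1+|x|^{2p-m}$ on $\{|x|\ge 1\}$. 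Taking $\theta$ small enough that a positive multiple of $(t-s)^\alpha|x|^{2p-m}$ survives on the right, the remainder reduces to
\[
h(s)=C_1(t-s)^{\alpha-1-\beta/(p-1)}+C_2(t-s)^{\alpha-1}+C_3,
\]
which belongs to $L^1(0,t)$ exactly when $\alpha>\beta/(p-1)=(p+1-m)/(p-1)$ (the second and third summands are automatic, since the same condition forces $\alpha>0$). On the bounded region $\{|x|\le K\}$ all factors are controlled and only add an $O((t-s)^{\alpha-1})$ contribution to $h$. The second inequality of Definition \ref{defn-2.6}(3) is proved identically, with $\eta|\nabla\upsilon|^2$ playing the role of $\langle Q\nabla\upsilon,\nabla\upsilon\rangle$ (its top-order power being $|x|^{2p-2m}\le|x|^{2p-m}$, again absorbed thanks to $\eps<\kappa/(\beta\Lambda)$).
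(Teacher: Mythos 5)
Your part (1) is essentially the paper's computation. In part (2) you reach the differential inequality by a genuinely different route: the paper splits $\{|x|\ge K\}$ into the two regions $|x|\ge C(t-s)^{-1/(\beta+m-2)}$ and $K\le |x|\le C(t-s)^{-1/(\beta+m-2)}$, exploiting the gap $\delta-\eps$ to make the expression nonnegative in the far region and crudely replacing $|x|$ by the threshold in the intermediate one; you instead keep the dissipation $\eps\beta\kappa(t-s)^{\alpha}|x|^{2p-m}$ visible, absorb the quadratic term into it, and dispose of the time-derivative term $\eps\alpha(t-s)^{\alpha-1}|x|^{\beta}$ by a weighted Young inequality with exponents $(2p-m)/\beta$ and $(2p-m)/(p-1)$. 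This yields exactly the paper's majorant exponent, since $\alpha-1-\beta/(p-1)=\alpha-(2p-m)/(p-1)=\alpha-\tfrac{2\beta+m-2}{\beta+m-2}$, hence the same integrability threshold $\alpha>(p+1-m)/(p-1)$; it avoids the case distinction and does not use $\eps<\delta$ in the differential inequality (that hypothesis is only needed for $W\le V$), which is a nice clarification of what each assumption buys.

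One intermediate inequality, however, is not correct as stated. You claim that on all of $\{|x|\ge K\}$ the term $\eps^{2}(t-s)^{2\alpha}\langle Q\nabla\upsilon,\nabla\upsilon\rangle\le \eps^{2}(t-s)^{2\alpha}\beta^{2}\Lambda(1+|x|^{m})|x|^{2\beta-2}$ is absorbed by \emph{half} the dissipation ``using $\eps<\kappa(\beta\Lambda)^{-1}$ and $t-s\le 1$''. Dividing by $\eps(t-s)^{\alpha}\beta|x|^{2p-m}$, this would require $\eps\beta\Lambda(1+|x|^{-m})(t-s)^{\alpha}\le\kappa/2$, and near $|x|=K$ the factor $1+|x|^{-m}$ can equal $2$, so you would need $\eps\beta\Lambda\le\kappa/4$, which does not follow from $\eps\beta\Lambda<\kappa$. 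The same objection applies to absorbing $C\eps(t-s)^{\alpha}|x|^{p-1}$ via the crude bound $|x|^{p-1}\le 1+|x|^{2p-m}$, since the constant $C$ (involving $d,\Lambda$) may exceed the surviving dissipation coefficient. Both slips are easily repaired by the device you already use on $B_K$: because $p-1<2p-m$ and, for $m>0$, $2\beta-2<2p-m$ while the top-order part of the quadratic term carries the coefficient $\eps(t-s)^{\alpha}\beta\Lambda<\kappa$, the absorption works for $|x|\ge R_1$ with $R_1$ large enough (or after a weighted Young inequality with a small parameter $\theta$ for the lower-order powers), and on the compact annulus $K\le|x|\le R_1$ all terms are bounded by a constant plus $O((t-s)^{\alpha-1})$, which goes into $h$. (For $m=0$ the factor $1+|x|^{m}=2$ effectively doubles $\Lambda$ in the leading term; the paper's own argument has the same glitch, so this is inherited rather than introduced by you.) With these adjustments your proof is complete and, in my view, somewhat cleaner than the paper's.
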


\begin{proof}
(1) Straightforward computations show that for $|x|\geq 1$ and $i,j=1, \ldots, d$ we have
\begin{align*}
D_iV(x) & =\delta\beta x_i|x|^{\beta-2}V(x),\\
D_{ij}V(x) & =\delta\delta_{ij}\beta|x|^{\beta-2}V(x)
+ \delta\beta (\beta-2) x_ix_j|x|^{\beta-4}V(x) +\delta^2\beta^2x_ix_j|x|^{2(\beta-2)}V(x)\,.
\end{align*}
Thus, using Hypothesis \ref{hyp3}(1), \eqref{cond-B2} and \eqref{cond-B3} we get for $|x| \geq K$ and $t \in [0,1]$
\begin{align*}
\A(t)V(x)
& = \delta \beta |x|^{\beta-2 }V(x) \bigg [{\rm Tr}(Q(t,x))
+(\beta-2)\frac{\langle Q(t,x)x,x\rangle}{|x|^2}\\
&\qquad\qquad\qquad\qquad  +\delta\beta |x|^{\beta}\frac{\langle Q(t,x)x,x\rangle}{|x|^2}
+\frac{\langle F(t,x),x\rangle}{|x|^2}\bigg ]\\
& \le  \delta\beta |x|^{\beta-2}V(x)\Big [(d+(\beta-2)_+)\Lambda (1+|x|^m)\\
& \qquad\qquad\qquad\qquad
+ \delta\beta\Lambda |x|^\beta  (1+|x|^m)
-\kappa|x|^{p+1}\Big]\,.
\end{align*}
Noting that by assumption $\beta +m = p+1 > m$ and $\delta\beta \Lambda < \kappa$, it follows that
\begin{eqnarray*}
\limsup_{|x|\to\infty}\Big [(d+(\beta-2)_+)\Lambda (1+|x|^m)
+ \delta\beta |x|^\beta \Lambda (1+|x|^m)
-\kappa|x|^{p+1}\big] = -\infty\,.
\end{eqnarray*}
Thus, picking $K_1$ large enough, we have $\A(t)V(x)\le 0$  for all $(t,x) \in [0,1]\times (\CR^d\setminus B_{K_1})$.
Since $\A(\cdot)V$ is bounded in $[0,1]\times B_{K_1}$ we have $\A(t)V(x)\le M$ for all $(t,x)\in [0,1]\times\CR^d$ and some positive constant $M$.

The same arguments show that
\begin{align*}
\eta\Delta V(x)+F(t,x)\cdot \nabla V(x)\le  \delta\beta|x|^{\beta-2}\Big [\eta(d+(\beta-2)_+)
+ \delta\beta\eta |x|^{\beta}-\kappa|x|^{p+1}\Big ]V(x)\,,
\end{align*}
for any $|x|\ge 1$. From Hypothesis \ref{hyp3}(1), it follows that $\eta \le \Lambda$. So, by the
assumption $\delta \beta \Lambda <\kappa$, we have $\delta \beta \eta <\kappa$ and hence
the right-hand side of the above equation obviously converges to $-\infty$ as $|x|\to \infty$. We see that also
$\eta\Delta V(x)+F(t,x)\cdot\nabla V(x)$ is bounded on $[0,1]\times\CR^d$. This finishes the proof of part (1).\medskip

(2) Let us first note that since $\eps < \delta$, we find $W(s,x) \leq (V(x))^{\varepsilon/\delta}<V(x)$ for all $s\in [0,t], x \in \CR^d$. It is immediate
from the definition of $W$ that $W(s,x) \to \infty$ as $|x|\to \infty$, uniformly for $s$ in compact subsets of $[0,t)$.
It thus remains to prove part (3) in Definition \ref{defn-2.6}.

With similar computations and estimates as in part (1), we find for $|x| \geq K$
\begin{align}
&\phantom{=}  \partial_sW(s,x) - \A (s)W(s,x)\notag\\
& = - \eps\alpha (t-s)^{\alpha-1} |x|^{\beta} W(s,x) - \eps\beta (t-s)^\alpha |x|^{\beta-2} W(s,x) \bigg[{\rm Tr}(Q(t,x))
\notag\\
& \qquad  +(\beta-2)\frac{\langle Q(t,x)x,x\rangle}{|x|^2}+\eps\beta(t-s)^\alpha |x|^{\beta}\frac{\langle Q(t,x)x,x\rangle}{|x|^2}
+\frac{\langle F(t,x),x\rangle}{|x|^2}\bigg]\notag\\
& \geq - \eps\alpha (t-s)^{\alpha-1} |x|^{\beta} W(s,x)-\eps\beta (t-s)^\alpha |x|^{\beta-2} W(s,x) \times\label{eq.lyapest1}\\
&\qquad \times \Big [(d+(\beta-2)_+)\Lambda (1+|x|^m)
+ \eps\beta |x|^\beta \Lambda (1+|x|^m)
-\kappa|x|^{p+1}\Big]\notag\\
& = - \eps\alpha (t-s)^{\alpha-1} |x|^{\beta} W(s,x)-\eps\beta (t-s)^\alpha |x|^{\beta-2} W(s,x) \times\notag\\
&\qquad \times \Big[(d+(\beta-2)_+)\Lambda (1+|x|^m)
+ \delta\beta |x|^\beta \Lambda (1+|x|^m) -\kappa|x|^{p+1}\Big]\notag\\
& \qquad + \eps\beta^2(\delta-\eps)\Lambda (t-s)^\alpha |x|^{2\beta-2}(1+|x|^m)W(s,x)\notag\\
& \geq \eps (t-s)^{\alpha-1} |x|^\beta \Big( (\delta-\eps)(t-s)\beta^2 \Lambda |x|^{\beta+m-2} -\alpha\Big)W(s,x)\label{eq.lyapest2}\\
& \qquad -\eps\beta (t-s)^\alpha |x|^{\beta-2} W(s,x)\Big [(d+(\beta-2)_+)\Lambda (1+|x|^m)\notag\\
&\qquad\qquad\qquad\qquad\qquad\qquad\qquad\;\,
+ \delta\beta\Lambda |x|^\beta  (1+|x|^m) -\kappa|x|^{p+1}\Big]\,.\notag
\end{align}

We now estimate this further, distinguishing two cases. To that end, we set $C:= \max\{((\delta-\eps)\beta^2\Lambda/\alpha)^{-1/(\beta+m-2)}, K_1\}$, where $K_1$ is as in part (1), so that the second summand in \eqref{eq.lyapest2} is positive for $|x| \geq K_1$.\smallskip

\emph{Case 1:} $|x| \geq C(t-s)^{-1/(\beta+m-2)}$.

In this case, $(\delta-\eps)\beta^2\Lambda (t-s)|x|^{\beta+m-2}\geq \alpha$ and thus the first summand in \eqref{eq.lyapest2} is positive. Since $|x] \geq K_1$, also the second one is positive, so that overall $(\partial_s-\A (s))W(s,x) \geq 0$
in this case.\smallskip

\emph{Case 2:} $K\le |x| \leq C(t-s)^{-1/(\beta+m-2)}$.

In this case we drop the term involving $\kappa$ in \eqref{eq.lyapest1}, and we rewrite the estimate for $W^{-1}(\partial_sW-\A W)$ expanding all the terms as follows:
\begin{align}
&\frac{\partial_sW(s,x)-\A(s)W(s,x)}{W(s,x)}\notag\\
\ge &-\varepsilon\alpha(t-s)^{\alpha-1}|x|^{\beta}
-\varepsilon\beta(t-s)^{\alpha}(d+(\beta-2)_+)\Lambda |x|^{\beta-2+m}\notag\\
&-\varepsilon^2\beta^2(t-s)^{\alpha}\Lambda |x|^{2\beta-2+m}
-\varepsilon^2\beta^2(t-s)^{\alpha}\Lambda |x|^{2\beta-2}\notag\\
&-\varepsilon\beta(t-s)^{\alpha}(d+(\beta-2)_+)\Lambda |x|^{\beta-2}\,.
\label{estima-dep-beta}
\end{align}
The powers of $|x|$ in the first three terms in the right-hand side of \eqref{estima-dep-beta} are all positive. Hence, these terms can be estimated from below, replacing $|x|$ by $C(t-s)^{-1/(\beta+m-2)}$. On the contrary the sign of the powers of $|x|$ in the last two terms of
\eqref{estima-dep-beta} depends on the value of $\beta$. If $\beta\ge 2$ the powers of $|x|$ are nonnegative so that we can estimate the last two terms in \eqref{estima-dep-beta} as we did for the first three terms.
We conclude that
\begin{align*}
&\phantom{=}  \partial_sW(s,x) - \A (s)W(s,x)\\
& \geq -  \eps\alpha C^\beta (t-s)^{\alpha-1- \frac{\beta}{\beta+m-2}}W(s,x) -
\eps\beta C^{\beta-2}(t-s)^{\alpha-\frac{\beta-2}{\beta+m-2}}W(s,x)\times\\
&\qquad \times\Lambda\bigg (d+(\beta-2)_+ + \eps\beta C^\beta (t-s)^{-\frac{\beta}{\beta+m-2}}\bigg )\bigg (1+ C^m(t-s)^{-\frac{m}{\beta+m-2}}\bigg )\\
& \geq -  \eps\alpha C^\beta (t-s)^{\alpha-1- \frac{\beta}{\beta+m-2}}W(s,x) -\eps\beta C^{\beta-2}(t-s)^{\alpha- \frac{\beta-2}{\beta+m-2}}W(s,x)\times\\
&\qquad \times(t-s)^{-\frac{\beta+m}{\beta+m-2}}\big(( d+(\beta-2)_++\eps\beta C^\beta)\Lambda(1+C^m)\big)\\
& = -C_1 (t-s)^{\alpha-\frac{2\beta+m-2}{\beta-2 +m}} W(s,x)\,,
\end{align*}
for a suitable constant $C_1$.
On the other hand, if $\beta\in [1,2)$, the power of $|x|$ in the last term is negative. Hence, we estimate
$|x|^{\beta-2}\le K^{\beta-2}$ and again we conclude that
\begin{align}
\partial_sW(s,x) - \A (s)W(s,x)
\ge -C_2(t-s)^{\alpha-\frac{2\beta+m-2}{\beta-2+m}}W(s,x)\,,
\label{estim-C2}
\end{align}
for a positive constant $C_2$. If $\beta<1$ the powers of $|x|$ in the last two terms of \eqref{estima-dep-beta} are negative.
 Arguing as above, we can still prove estimate \eqref{estim-C2} possibly with a different constant $C_2$.
\smallskip

Combining the two cases, we see that $\partial_s W(s,x) - \A(s)W(s,x) \geq -\tilde{h}(s)W(s,x)$ for all $s \in [0,t)$
and $|x| \geq K$, where $\tilde h(s)=C(t-s)^{\alpha-\frac{2\beta +m-2}{\beta-2+m}}$ which belongs to $L^1((0,t))$ since $\alpha>\beta/(\beta-2+m)$.
To cover also the case $|x| \leq K$, observe that, by continuity, the function $(s,x) \mapsto |W(s,x)^{-1}(
\partial_s W(s,x) -\A(s)W(s,x))|$ is bounded on $[0,t]\times B_K$, say by $\tilde M$. Thus, if we define $h(s) :=
\max\{\tilde M/\tilde{h}(s), 1\} \tilde{h}(s)$, then $\partial_sW(s,x) -\A(s)W(s,x) \geq -h(s)W(s,x)$ for all
$(s,x) \in (0,t)\times\CR^d$.

The analogous estimate for $\eta \Delta + F\cdot \nabla$ follows by observing that
\begin{eqnarray*}
\eta\Delta W(s,x) + F(s,x)\cdot \nabla W(s,x) \leq \A(s)W(s,x),\qquad\;\, s\in (0,t),\;\,|x|\ge 1\,.
\end{eqnarray*}
This finishes the proof of part (2).
\end{proof}

Assuming Hypothesis \ref{hyp3}, we can now prove the following kernel estimates.

\begin{thm}
\label{thm.5.2}
Assume Hypothesis \ref{hyp3}, fix $\alpha > (p+1-m)/(p-1)$ and $k>d+2$. Then for every $0<\delta_0<
\kappa(\Lambda(p+1-m))^{-1}$, there exists a positive constant $\tilde C$ such that
\begin{equation}\label{estim-ex}
p_{t,s}(x,y)\leq \tilde C\left [(t-s)^{1-\frac{\alpha k(p\vee m)}{p+1-m}}+(t-s)^{2-\frac{\alpha k(p+(m-1)_+)}{p+1-m}}\right ]
e^{-\delta_0(t-s)^{\alpha}|y|^{p+1-m}}\,,
\end{equation}
for any $t\in (0,1]$, any $s\in (0,t)$ and any $x,y\in\CR^d$.
\end{thm}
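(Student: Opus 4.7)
The plan is to apply Theorem \ref{t.pointwise} with $\mu_t := \delta_x$, so that $\mu_s = p_{t,s}(x,\cdot)$ and the density $\rho(s,y)$ coincides with $p_{t,s}(x,y)$, choosing the weight $w$ and the time-dependent Lyapunov functions $W_1$, $W_2$ so that $w(s,y)^{-1}$ produces the required Gaussian-type decay $e^{-\delta_0(t-s)^\alpha|y|^{p+1-m}}$. Fix $\upsilon \in C^2(\CR^d)$ with $\upsilon(y) = |y|^\beta$ for $|y| \geq 1$, where $\beta := p+1-m$, and pick $\delta$ with $\delta_0 < \delta < \kappa/(\Lambda\beta)$ so that Lemma \ref{l.lyapunov}(1) provides the Lyapunov function $V = e^{\delta\upsilon}$. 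Select $\eps_1, \eps_2$ with $\delta_0 < \eps_1 < \eps_2 < \delta$, further arranged so that $\frac{k-1}{k}\delta_0 + \frac{1}{k}\eps_2 > \eps_1$ (this will ensure that $w^{(k-1)/k}W_2^{1/k}$ dominates $W_1$, needed for Hypothesis \ref{hyp2}(3)). Define $W_i(s,y) := \exp(\eps_i(t-s)^\alpha\upsilon(y))$; by Lemma \ref{l.lyapunov}(2) each is a time dependent Lyapunov function, and both satisfy $W_i \leq V^{1-\sigma}$ with $\sigma := 1 - \eps_2/\delta \in (0,1)$. Let $w(s,y)$ be a smooth modification of $\exp(\delta_0(t-s)^\alpha\upsilon(y))$ with $w \geq 1$ everywhere.

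Fix $s \in (0,t)$ and pick $0 < a_0 < a < b < b_0 < t$ with $s \in (a,b)$ and $b-a$, $b_0-b$, $b_0-a_0$ all comparable to $t-s$ (adjusting the lower endpoint when $s$ is close to $0$). The next step is to verify the inequalities (i)--(vi) of Hypothesis \ref{hyp2}(2) together with those of part (3) for $(w, W_1, W_2)$ on $[a_0, b_0]$. Each bound reduces to an elementary maximum of the form $\max_{y\in\CR^d} |y|^r \exp(-A(t-\sigma)^\alpha|y|^\beta)$, whose value scales like $(t-\sigma)^{-\alpha r/\beta}$ with a constant depending on $\eps_i-\delta_0$. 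Using Hypothesis \ref{hyp3} and routine (but lengthy) computations one obtains, for $\sigma\in(a_0,b_0)$, that $c_1$ is bounded, $c_2, c_8 \lesssim (t-s)^{\alpha(1-m)/\beta}$, $c_3 \lesssim (t-s)^{\alpha(2-m)/\beta}$, $c_4 \lesssim (t-s)^{-1}$, $c_5 \lesssim (t-s)^{-\alpha m/\beta}$, $c_6 \lesssim (t-s)^{-\alpha pk/\beta}$ and $c_7 \lesssim (t-s)^{2\alpha/\beta}$.

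Because $W_i(t,\cdot) \equiv 1$ and $\mu_t = \delta_x$, Proposition \ref{p.lyapunov} yields $\zeta_i(\sigma) \leq \exp(\int_\sigma^t h(\tau)\,d\tau)$, so $\sup_\sigma \zeta_i(\sigma)$ is uniformly bounded while $\int_{a_0}^{b_0}\zeta_i(\sigma)\,d\sigma \lesssim t-s$. Substituting these bounds on the $c_j$ and $\zeta_i$ into \eqref{eq-final-thm} and tracking the most singular powers of $(t-s)$, the term $(c_5^k + c_6)\int\zeta_2$ contributes $(t-s)^{1-\alpha k(p\vee m)/\beta}$, giving the first term of \eqref{estim-ex}; the term $c_2^k c_6(\int \zeta_2)^2$ contributes $(t-s)^{2-\alpha k(p+(m-1)_+)/\beta}$ (the split according to whether $m\leq 1$ or $m>1$ arises because $c_2^k$ is bounded in the former regime and grows like $(t-s)^{-\alpha k(m-1)/\beta}$ in the latter), producing the second term of \eqref{estim-ex}. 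All other contributions in \eqref{eq-final-thm} are dominated by one of these two after absorbing bounded factors. Dividing by $w(s,y) \geq e^{\delta_0 (t-s)^\alpha|y|^\beta}$ for $|y|$ outside a fixed compact set, and absorbing the trivial estimate on the complementary compact set into $\tilde C$, finally yields \eqref{estim-ex}.

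The main obstacle is the bookkeeping required in the verification of Hypothesis \ref{hyp2}: each of the eight constants $c_j$ demands a separate optimization that carefully weighs a polynomial power of $|y|$ against the exponential decay rate $(\eps_i - \delta_0)(t-\sigma)^\alpha$, and among the many contributions in \eqref{eq-final-thm} one must precisely identify $(c_5^k+c_6)\int\zeta_2$ and $c_2^k c_6(\int\zeta_2)^2$ as producing the two claimed exponents and verify that every other combination is dominated by these.
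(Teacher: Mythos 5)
Your proposal is correct and follows essentially the same route as the paper's proof: apply Theorem \ref{t.pointwise} to the evolution system $p_{t,s}(x,\cdot)=G(t,s)^*\delta_x$ with the exponential weights $w,W_1,W_2$ built from $\upsilon$ (the paper's parameters $\eps_0<\eps_1<\eps_2<\delta$ with $k(\eps_1-\eps_0)<\eps_2-\eps_0$ are exactly your $\delta_0<\eps_1<\eps_2<\delta$ with $\tfrac{k-1}{k}\delta_0+\tfrac1k\eps_2>\eps_1$), verify Hypothesis \ref{hyp2} with constants $c_j$ scaling as powers of $t-s$ (your exponents match the paper's $r_j$), bound $\zeta_i$ via Proposition \ref{p.lyapunov}, choose $a_0,a,b,b_0$ comparable to $t-s$, and identify $(c_5^k+c_6)\int\zeta_2$ and $c_2^kc_6(\int\zeta_2)^2$ as the leading contributions. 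No substantive deviation or gap.
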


\begin{example}
A concrete example of operators to which Theorem \ref{thm.5.2} applies is given
by the operators $\A(t)$, defined on smooth functions $\varphi$ by
\begin{eqnarray*}
(\A(t)\varphi)(x)=(1+|x|^m){\rm Tr}(Q^0(t,x)D^2\varphi(x))-b(t,x)|x|^{p-1}\langle x,\nabla\varphi(x)\rangle\,,
\end{eqnarray*}
for any $t\in [0,1]$ and any $x\in\CR^d$. Here $p,m$ and $b$ are as in Hypothesis \ref{hyp3} and $Q^0$ is a matrix valued function which is positive definite (i.e.\ there exists some $\eta >0$ such that $\langle Q^0(t,x)\xi,\xi\rangle\ge\eta |\xi|^2$ for all $(t,x)\in [0,1]\times\CR^d$ and $\xi\in\CR^d$) and such that  the entries $q_{ij}^0$ of the matrix $Q^0$ are
bounded, locally H\"older continuous in $[0,1]\times\CR^d$, continuously differentiable with respect to the spatial variables with bounded derivatives.

In the particular case when $Q^0$ is the identity matrix and $b\equiv 1$ we recover the time independent operator $\A_1$,
defined by
\begin{eqnarray*}
\A_1\varphi (x) := \Delta - |x|^{p-1}\langle x, \nabla \varphi (x)\rangle \, .
\end{eqnarray*}
In this situation our estimate \eqref{estim-ex} agrees with the result stated in \cite[Example 3.3]{alr10}.

Another particular case is the operator $\A_2$ defined by
\begin{eqnarray*}
\A_2\varphi (x) := (1+|x|^m)\Delta - |x|^{p-1}\langle x, \nabla \varphi (x)\rangle \, ,
\end{eqnarray*}
where $m>0$ and $p\ge \max\{m-1,1\}$. Applying Theorem \ref{thm.5.2} we obtain the following estimate for the associated transition kernel
\begin{eqnarray*}
p_t(x,y)\le C\left [t^{1-\frac{\alpha k(p\vee m)}{p+1-m}}+t^{2-\frac{\alpha k(p+(m-1)_+)}{p+1-m}}\right ]
e^{-\delta_0t^{\alpha}|y|^{p+1-m}}\,,
\end{eqnarray*}
for $\alpha >(p+1-m)/(p-1),\,k>d+2$, any $t\in (0,1]$ and any $x,y\in\CR^d$.
Observe that such estimates are known for the invariant measure, that is, for the limit, as $t\to \infty$ of $p_t(x,y)$, see
\cite[Example 3.6]{ffmp09}.
\end{example}

\begin{proof}[Proof of Theorem \ref{thm.5.2}]

To prove the theorem, we apply Theorem \ref{t.pointwise} with the evolution system of measures $p_{t,s}(x_0, \cdot) :=
G(t,s)^*\delta_{x_0}$, where $x_0 \in \CR^d$ is fixed. The density $p_{t,s}(x_0,\cdot)$ is defined as usual.
We let $\alpha, \beta, \delta, \upsilon$ and $V$ be
as in Lemma \ref{l.lyapunov}. We then pick $0<\eps_0 < \eps_1<\eps_2 < \delta$ such that $k(\eps_1-\eps_0)
< (\eps_2-\eps_0)$ and define the functions $w, W_1, W_2$ by
\begin{eqnarray*}
w(s,x) := e^{\eps_0(t-s)^\alpha \upsilon (x)},\qquad W_1(s,x) := e^{\eps_1(t-s)^\alpha \upsilon (x)},\qquad
W_2(s,x) := e^{\eps_2(t-s)^\alpha \upsilon (x)}
\end{eqnarray*}
for $(s,x) \in [0,t]\times \CR^d$. Clearly, $w\leq W_1\leq W_2$ and it follows from Lemma \ref{l.lyapunov}
that $W_1$ and $W_2$ are time dependent Lyapunov functions. We now check that Hypothesis \ref{hyp2} is satisfied. For now, we  fix
$0<a_0<a<b<b_0<t$. In the end, when we apply Theorem \ref{t.pointwise}, we will make a particular choice.
\smallskip

We first note that $\partial_sw(s,x) = -\eps_0\alpha (t-s)^{\alpha -1}\upsilon (x)w(s,x)$. It now easily follows that $w^{-2}\partial_sw$ is bounded
on $[a_0,b_0]\times \CR^d$ as long as $b_0<t$. Similarly, one sees that $w^{-2}\nabla w$ is bounded.\smallskip

Let us now turn to the estimates required in parts (2) and (3) of Hypothesis \ref{hyp2}.

Since $\eps_0<\eps_1$, we can clearly choose $c_1 :=1$, independent of $(a_0,b_0)$. As for the constant $c_2$, we have to bound the expression
\begin{align*}
\frac{|Q(s,x)\nabla w(s,x)|}{w(s,x)^{1-1/k}W_1(s,x)^{1/k}} & =\eps_0\beta(t-s)^\alpha |x|^{\beta-2} |Q(s,x)x|\Big(\frac{w(s,x)}{W_1(s,x)}\Big)^\frac{1}{k}\\
& \leq \eps_0\beta\Lambda (t-s)^{\alpha}|x|^{\beta-1} (1+|x|^m)e^{-k^{-1}(\eps_1-\eps_0)(t-s)^\alpha |x|^\beta}\,,
\end{align*}
where the inequalities hold for $|x|\geq 1$. To bound the above we note that for $r,\gamma,y >0$, we have
\begin{eqnarray*}
y^\gamma e^{-ry^\beta} = r^{-\frac{y}{\beta}}(ry^\beta)^\frac{\gamma}{\beta}e^{-ry^\beta} \leq r^{-\frac{\gamma}{\beta}}\Big(
\frac{\gamma}{\beta}\Big)^\frac{\gamma}{\beta} e^{-\frac{\gamma}{\beta}} =: r^{-\frac{\gamma}{\beta}} C(\gamma,\beta)\, ,
\end{eqnarray*}
which follows from the fact that the maximum of the function $t \mapsto t^re^{-t}$ on $(0,\infty)$ is attained at the point $t=r$. Applying this estimate
in the case where $y = |x|$, $r= k^{-1}(\eps_1-\eps_0)(t-s)^\alpha$, $\beta=\beta$ and $\gamma = \beta-1+m$, we get
\begin{align*}
& \phantom{=} \frac{|Q(s,x)\nabla w(s,x)|}{w(s,x)^{1-1/k}W_1(s,x)^{1/k}}\\
&  \leq 2\eps_0\beta\Lambda (t-s)^\alpha \Big(\frac{\eps_1-\eps_0}{k}\Big)^{-\frac{\beta
-1+m}{\beta}}(t-s)^{-\alpha \frac{\beta-1+m}{\beta}} C(\beta-1+m, \beta)\\
& =: \bar{c}_2 (t-s)^{-\frac{\alpha(m-1)}{\beta}} \leq \bar{c}_2(t-b_0)^\frac{-\alpha(m-1)_+}{\beta}\, ,
\end{align*}
for all $|x|\geq1$. Since the quotient we have to estimate is bounded on $[0,t]\times B_1$, we can have the above estimate on all of
$[a_0,b_0]\times \CR^d$ at the cost of a possibly larger constant $\bar{c}_2$.\smallskip

The other constants $c_3,\ldots,c_8$ are obtained in a similar way. It turns out that they are always of the form $c_j = \bar{c}_j(t-b_0)^{-r_j}$
for a certain constant $\bar{c}_j$ (which may depend on the constants $d,p, m, k, \eps_0, \eps_1, \eps_2, \Lambda$ and the behaviour
of the function $\upsilon$ on $B_1$) and a certain exponent $r_j$. Of particular importance for us will be the exponents $r_j$. Therefore, to
simplify the presentation, we will drop constants from our notation and write $\lesssim$ to indicate an estimate involving a constant
depending only on the quantities just mentioned.

Concerning $c_3$, we find
\begin{align*}
&\frac{|\A_0w(s,x)|}{w(s,x)^{1-2/k}W_1(s,x)^{2/k}}\\
\lesssim &
\big [(t-s)^{\alpha}|x|^{\beta-2+m}+(t-s)^{2\alpha}|x|^{2\beta -2+m}\big ] e^{-\frac{2}{k}(\eps_1-\eps_0)(t-s)^\alpha|x|^\beta}\\
\lesssim & (t-s)^{2\alpha} (t-s)^{-\alpha\frac{2\beta-2+m}{\beta}} \leq (t-b_0)^{-\frac{\alpha (m-2)_+}{\beta}}\,,
\end{align*}
so that $r_3 = \alpha (m-2)_+/\beta$. The estimates
\begin{align*}
\frac{|\partial_sw(s,x)|}{w(s,x)^{1-2/k}W_1(s,x)^{2/k}} & \lesssim (t-s)^{\alpha-1}|x|^{\beta} e^{-\frac{2}{k}(\eps_1-\eps_0)(t-s)^\alpha|x|^\beta}\\
& \lesssim (t-s)^{\alpha -1} (t-s)^{-\alpha} \leq (t-b_0)^{-1}\,,
\end{align*}
\begin{align*}
\frac{|\sum_{j=1}^dD_jq_{ij}(s,x)|}{w(s,x)^{-1/k}W_2(s,x)^{1/k}} \lesssim |x|^{m} e^{-\frac{1}{k}(\eps_2-\eps_0)(t-s)^\alpha|x|^\beta}
\lesssim (t-s)^{-\frac{\alpha m}{\beta}} \leq  (t-b_0)^{-\frac{\alpha m}{\beta}}
\end{align*}
and
\begin{align*}
\frac{w(s,x)|F(s,x)|^k}{W_2(s,x)} \lesssim |x|^{kp} e^{-(\eps_2-\eps_0)(t-s)^\alpha|x|^\beta}\lesssim (t-s)^{-\frac{\alpha kp}{\beta}} \leq  (t-b_0)^{-\frac{\alpha kp}{\beta}}
\end{align*}
yield $r_4=1$, $r_5= \alpha m/\beta$ and $r_6 = \alpha kp/\beta$, respectively. Finally, repeating the computations for $c_3$
with $m=0$ yields $r_7 = 0$ and similar computations as for $c_2$ yield $r_8 = \alpha (m-1)_+/\beta$.\medskip

Now, given $s \in (0,t)$, we choose
$a_0 = \max\{s- (t-s)/2, s/2\}$, $b=s+(t-s)/3$ and $b_0 = s+ (t-s)/2$ so that $b_0-b=(t-s)/6$, $t-b_0= (t-s)/2$ and $b_0-a_0 \leq t-s$.
Let us also note that as a consequence
of Proposition \ref{p.lyapunov}
\begin{eqnarray*}
\zeta_i(r) := \int_{\CR^d} W_i(r)\, d\mu_r \leq  \exp\left (\int_r^th(\tau)\,d\tau\right )\int_{\CR^d}W_i(t)\,d\delta_{x_0} =
\exp\left (\int_r^th(\tau)\,d\tau\right )\,,
\end{eqnarray*}
which, recalling the special form of $h$ from the proof of Lemma \ref{l.lyapunov} is easily seen to be bounded by a constant $M$
only depending on $\alpha$ and $\beta$. In particular, $M$ does not depend on $(a_0,b_0)$ so that
\begin{eqnarray*}
\int_{a_0}^{b_0} \zeta_i(r)\, dr \leq M(b_0-a_0) = M(t-s)\, .
\end{eqnarray*}

We now apply Theorem \ref{t.pointwise}, keeping track only of powers of $(t-s)$ and absorbing all other constants into the constant $C$
which exists by virtue of that theorem. We call the modified constant $C_1$. We have
\begin{align*}
wp_{t,s}&(x_0,\cdot) \leq C_1 \Big[ 1+ (t-s)\Big((t-s)^{- \frac{\alpha k (m-1)_+}{\beta}} + (t-s)^{-\frac{\alpha km}{\beta}} +
(t-s)^{-\frac{\alpha kp}{\beta}}\Big)\\
& + (t-s)^2\Big((t-s)^{-\frac{2\alpha k(m-1)_+}{\beta}} + (t-s)^{-\frac{\alpha k ((m-1)_+ +p)}{\beta}} +
(t-s)^{-\frac{\alpha k(m-2)_+}{\beta}} +1\Big)\\
& + (t-s)^2\frac{1}{(t-s)^k} + (t-s)^{\frac{4}{k}}\frac{1}{(t-s)^2}\\
& + (t-s)^{\frac{4}{k}}\Big((t-s)^{-\frac{4\alpha (m-1)_+}{\beta}} + (t-s)^{-\frac{2\alpha ((m-1)_+ + p)}{\beta}} +
(t-s)^{-\frac{2\alpha (m-2)_+}{\beta}} +1\Big)\Big]\,.
\end{align*}
To further simplify this, we only consider terms with the highest negative exponent, dropping all other terms at the cost of a possibly
larger constant $C_1$. We may thus drop all terms $1$.
We may also drop the entire fourth line, starting with $(t-s)^\frac{4}{k}$, as every
exponent appearing there is the $2/k$-fold (and thus with a smaller negative exponent) of an exponent appearing two lines above.
Moreover, the leading term on the second line is $(t-s)^{2-\frac{\alpha k ((m-1)_+ +p)}{\beta}}$, as it can be easily seen, taking into account that $p>m-1$. This term controls also
the second term in brackets on the first line, since $\alpha/\beta>1/(p-1)$ and $p>1$. Hence, the leading term on the first line
is $(t-s)^{1-\frac{\alpha kp}{\beta}}$. This term can be used also to bound the two terms on the third line,
again since $\alpha/\beta>1/(p-1)$.
We thus obtain
\begin{eqnarray*}
wp_{t,s}(x_0,\cdot) \leq C_2( (t-s)^{1-\frac{\alpha k (p\vee m)}{\beta}} + (t-s)^{2- \frac{\alpha k ((m-1)_+ + p)}{\beta}})
\end{eqnarray*}
for a certain constant $C_2$. Recalling the definitions of $w$ and $\beta$,
this is \eqref{estim-ex} as $C_2$ does not depend on $x_0$.
\end{proof}

\bibliographystyle{plain}

\end{document}